\documentclass[a4paper]{article}
\usepackage{graphicx}        
\usepackage{multicol} 
\usepackage[bottom]{footmisc}
\usepackage{color,amssymb,amsmath,epsfig,ifthen,graphicx,twoopt,tabularx,xargs,a4wide,answers}
\usepackage[utf8]{inputenc}

\usepackage{amsfonts}
\usepackage[bbgreekl]{mathbbol}
\DeclareSymbolFontAlphabet{\mathbbm}{bbold}
\DeclareSymbolFontAlphabet{\mathbb}{AMSb}%

\usepackage[numbers]{natbib}
\usepackage{amsthm}
\usepackage[boxruled,algo2e]{algorithm2e}

\usepackage{enumitem,verbatim}

\usepackage[textwidth=2cm,textsize=footnotesize]{todonotes}    

\usepackage{xr-hyper}
\usepackage{hyperref}
\hypersetup{colorlinks=true,citecolor=blue, urlcolor=blue, linkcolor=black, filecolor=black}
\usepackage{cleveref}
\newtheorem{theorem}{Theorem}[section]
\crefname{theorem}{theorem}{theorems}
\Crefname{Theorem}{Theorem}{Definitions}

\newtheorem{definition}{Definition}[section]
\crefname{definition}{definition}{definitions}
\Crefname{Definition}{Definition}{Definitions}

\crefname{corollary}{corollary}{corollaries}
\Crefname{Corollary}{Corollary}{Corollaries}

\newtheorem{proposition}[theorem]{Proposition}
\crefname{proposition}{proposition}{propositions}
\Crefname{Proposition}{Proposition}{Propositions}

\newtheorem{lemma}[theorem]{Lemma}
\crefname{lemma}{lemma}{lemmas}
\Crefname{Lemma}{Lemma}{Lemmas}

\crefname{example}{example}{examples}
\Crefname{example}{Example}{Examples}

\newtheorem{remark}{Remark}[section]
\crefname{remark}{remark}{remarks}
\Crefname{remark}{Remark}{Remarks}


\crefname{assumption}{assumption}{assumptions}
\Crefname{assumption}{Assumption}{Assumptions}

\crefname{enumi}{point}{point}
\Crefname{enumi}{Point}{Point}


\theoremstyle{remark}


\def\lcag{\mathbbm{G}}
\def\adjoint{\mathsf{H}}

\def\povm{p.o.v.m.}

\def\fiarma{FIARMA}

\newcommand{\norm}[1]{{
    \left\| #1 \right\|}} 
\newcommand{\osego}[1]{{\left( #1 \right)}} 
\newcommand{\oseg}[1]{{\left( #1 \right]}} 

\newcommandx\cspanarg[2][1=]{\ensuremath{\overline{\mathrm{Span}}^{#1}\left(#2\right)}}

\newcommand{\varsqrt}[1]{\left(#1\right)^{1/2}}

\newcommand{\spec}{{\rm \spec}}

\newcommand{\range}{\mathrm{Im}}
\newcommand{\crange}{\overline{\mathrm{Im}}}

\newcommand{\tnorm}[1]{{\left\vert\kern-0.25ex\left\vert\kern-0.25ex\left\vert #1 
    \right\vert\kern-0.25ex\right\vert\kern-0.25ex\right\vert}} 

\newcommand{\fracintoptransfer}[1]{\mathrm{FI}_{#1}}
\newcommandx{\fiarmapred}[1][1={\mapol,\arpol,D}]{\mathrm{\Phi}^{\dagger}_{#1}}
\newcommandx{\fiarmapredcoef}[2][1={\mapol,\arpol,D}]{\aop^{\dagger}_{#2}\lr{#1}}
\newcommand{\filterprocess}[1]{F_{#1}}

\newcommand{\processtransfer}[1]{\cS_{#1}}

\newcommand{\essinfarg}[1]{\mathop{#1\text{-}\mathrm{essinf}}}
\newcommand{\essuparg}[1]{\mathop{#1\text{-}\mathrm{essup}}}

\newcommand{\bigo}[1]{O \left( #1 \right)}

\def\unitdisk{\mathbb{D}}
\def\unitcircle{\mathbb{U}}


\def\rset{\mathbb{R}}
\def\cset{\mathbb{C}}
\def\zset{\mathbb{Z}}
\def\nset{\mathbb{N}}

\usepackage{amsfonts}
\usepackage[bbgreekl]{mathbbol}
\DeclareSymbolFontAlphabet{\mathbbm}{bbold}
\DeclareSymbolFontAlphabet{\mathbb}{AMSb}%
\def\mapol{\bbtheta}
\def\arpol{\bbphi}
\def\fiarmacol{\aleph}
\def\apol{\mathbbm{p}}



%
  {\begin{enumerate}}%
  {\end{enumerate}}

\def\rmi{\mathrm{i}}
\def\rme{\mathrm{e}}
\def\rmd{\mathrm{d}}


\newcommandx{\aslim}[1]{\ensuremath{\stackrel{#1\text{a.s.}}{\longrightarrow}}}  











\newcommand{\1}{\mathbbm{1}}

%


\def\bx{\mathbf{x}}
\def\bX{\mathbf{X}}
\def\by{\mathbf{y}}

\def\eqsp{\;}

%




\newcommand{\aop}{\mathrm{P}}
\newcommand{\bop}{\mathrm{Q}}
\newcommand{\cop}{\mathrm{C}}
\newcommand{\fop}{\mathrm{F}}

\def\calA{\mathcal{A}}

\def\cF{\mathcal{F}}
\def\cE{\mathcal{E}}

\def\cH{\mathcal{H}}
\def\cG{\mathcal{G}}
\def\cI{\mathcal{I}}

\def\Vset{\mathsf{V}}

\def\borel{\mathcal{B}}
\newcommand{\pscal}[2]{\left\langle #1, #2 \right\rangle}



\def\b0{{\bf 0}}

\def\bx{{\bf x}}
\def\by{{\bf y}}

\def\bX{{\bf X}}

\def\cA{\mathcal{A}}

\def\cC{\mathcal{C}}
\def\cD{\mathcal{D}}
\def\cE{\mathcal{E}}
\def\cF{\mathcal{F}}
\def\cG{\mathcal{G}}
\def\cH{\mathcal{H}}
\def\cI{\mathcal{I}}
\def\cJ{\mathcal{J}}

\def\cL{\mathcal{L}}
\def\cM{\mathcal{M}}
\def\cN{\mathcal{N}}

\def\cQ{\mathcal{Q}}
\def\cR{\mathcal{R}}
\def\cS{\mathcal{S}}

\def\cV{\mathcal{V}}






\def\tr{\mathrm{Tr}}

\newcommand\lspan[1]{\mathrm{Span}\left(#1\right)}
\newcommand\cspan[1]{\overline{\mathrm{Span}}\left(#1\right)}
\newcommand\algo[1]%
{
    \begin{center} %
    \begin{tabular} {||p{10 cm}l ||}%
               #1 &  \\
    \hline
    \end{tabular}
    \end{center}
}







\newcommand{\Vsigma}{\ensuremath{\mathcal{V}}}

\newcommand{\chunk}[4][]%
{\ifthenelse{\equal{#1}{}}{\ensuremath{{#2}_{#3:#4}}}{\ensuremath{#2^#1}_{#3:#4}}
}



\def\esp{\mathbb{E}}
\newcommandx\prob[2][1=,2=]{\ensuremath{{\mathbb P}_{#1}^{#2}}}
\newcommand{\PP}[1][]{\ifthenelse{\equal{#1}{}}{\ensuremath{\mathbb{P}}}{\ensuremath{\mathbb{P}\left( #1 \right)}}}
\newcommandx{\PParg}[2][1=]{\PP_{#1}\left(#2\right)}
\newcommand{\PE}[1][]{\ifthenelse{\equal{#1}{}}{\esp}{\ensuremath{{\mathbb E}\left[ #1 \right]}}}
\newcommandx{\PEarg}[2][1=]{\PE_{#1}\left[#2\right]}
\newcommand{\PVar}{\ensuremath{\operatorname{Var}}}
\newcommandx\var[2][1=]{\ensuremath{\PVar_{#1}\left( #2\right)}}
\newcommandx\cvar[3][1=]{\ensuremath{\PVar_{#1}\left( \left. #2 \right| #3 \right)}}

\newcommand{\PCov}{\ensuremath{\operatorname{Cov}}}
\newcommand\Cov[2]{\PCov\left(#1,#2\right)}
\newcommandx\cov[3][1=]{\ensuremath{\mathrm{Cov}_{#1}\left( #2,#3 \right)}}
\newcommandx\ccov[3][1=]{\ensuremath{\mathrm{Cov}_{#1}\left( \left. #2 \right| #3 \right)}}

\newcommand{\CPP}[3][]
{\ifthenelse{\equal{#1}{}}{\PP\left[\left. #2 \, \right| #3 \right]}{\mathbb{P}_{#1}\left(\left. #2 \, \right | #3 \right)}}
\newcommand{\CPE}[3][]
{\ifthenelse{\equal{#1}{}}{\PE\left[ \left. #2 \right| #3
    \right]}{\mathbb{E}_{#1} \left[ \left. #2 \right| #3 \right]}}
\newcommandx\cprob[4][1=,2=]{\ensuremath{\PP_{#1}^{#2}\left[ \left. #3 \right|
      #4 \right]}}
\newcommandx\HMCP[2][1=]{
\ifthenelse{\equal{#1}{}}{\PP_{#2}}{\PP_{#1,#2}}
}
\newcommandx\HMCE[2][1=]{
\ifthenelse{\equal{#1}{}}{\PE_{#2}}{\PE_{#1,#2}}
}
\newcommandx\HMCEarg[3][1=]{
\ifthenelse{\equal{#1}{}}{\PE_{#2}\left[#3\right]}{\PE_{#1,#2}\left[#3\right]}
}


\def\loikhi2{\mathbf{\chi^2}}

\newcommandx{\proj}[2]{\ensuremath{\operatorname{proj}\left( \left. #1\right|#2\right)}}



\newcommand{\U}{\ensuremath{U}}

\newcommand{\URoot}{\ensuremath{R}}
\newcommand{\UCov}[1][]%
{%
\ifthenelse{\equal{#1}{}}{\URoot \URoot^t}{\URoot_{#1} \URoot^t_{#1}}%
}

\newcommand{\VRoot}{\ensuremath{S}}
\newcommand{\VCov}[1][]%
{%
\ifthenelse{\equal{#1}{}}{\VRoot \VRoot^t}{\VRoot_{#1} \VRoot^t_{#1}}%
}


\newcommand{\LDX}[2]{\ensuremath{L}}

\newcommand{\postdx}[3][]%
{%
\ifthenelse{\equal{#1}{}}{\ensuremath{\psi_{#2|#3}}}{\ensuremath{\psi_{#1,#2|#3}}}%
}

\newcommand{\epostdx}[3][]%
{%
\ifthenelse{\equal{#1}{}}{\ensuremath{\hat{\psi}_{#2|#3}}}{\ensuremath{\hat{\psi}_{#1,#2|#3}}}%
}

\newcommandx{\predx}[3][1=\bX]{#1_{#2|#3}}   
\newcommand{\predpx}[3][]%
{%
\ifthenelse{\equal{#1}{}}{\ensuremath{\varphi_{#2|#3}}}{\ensuremath{\varphi_{#1,#2|#3}}}%
}
\newcommandx\cesp[4][1=,2=]{\ensuremath{{\mathbb E}_{#1}^{#2}\left[ \left. #3 \right| #4 \right]}}

\newcommand{\filt}[2][]%
{%
\ifthenelse{\equal{#1}{}}{\ensuremath{\phi_{#2}}}{\ensuremath{\phi_{#1,#2}}}%
}
\newcommand{\pred}[3][]%
{%
\ifthenelse{\equal{#1}{}}{\ensuremath{\phi_{#2|#3}}}{\ensuremath{\phi_{#1,#2|#3}}}%
}
\newcommand{\post}[3][]%
{%
\ifthenelse{\equal{#1}{}}{\ensuremath{\phi_{#2|#3}}}{\ensuremath{\phi_{#1,#2|#3}}}%
}
\newcommand{\logl}[2][]%
{%
\ifthenelse{\equal{#1}{}}{\ensuremath{\ell_{#2}}}{\ensuremath{\ell_{#1,#2}}}%
}
\newcommand{\lhood}[2][]%
{%
\ifthenelse{\equal{#1}{}}{\ensuremath{\mathrm{L}_{#2}}}{\ensuremath{\mathrm{L}_{#1,#2}}}%
}
\newcommand{\cc}[2][]%
{%
\ifthenelse{\equal{#1}{}}{\ensuremath{c_{#2}}}{\ensuremath{c_{#1,#2}}}%
}
\newcommand{\forvar}[2][]%
{%
\ifthenelse{\equal{#1}{}}{\ensuremath{\alpha_{#2}}}{\ensuremath{\alpha_{#1,#2}}}%
}
\newcommand{\nforvar}[2][]%
{%
\ifthenelse{\equal{#1}{}}{\ensuremath{\bar{\alpha}_{#2}}}{\ensuremath{\bar{\alpha}_{#1,#2}}}%
}

\newcommand{\BK}[2][]%
{%
\ifthenelse{\equal{#1}{}}{\ensuremath{\mathrm{\mathrm{B}}_{#2}}}{\ensuremath{\mathrm{B}_{#1,#2}}}%
}
\newcommand{\filtfunc}[2][]%
{%
\ifthenelse{\equal{#1}{}}{\ensuremath{\tau_{#2}}}{\ensuremath{\tau_{#1,#2}}}%
}


\newcommand{\filtmean}[2][]
{\ifthenelse{\equal{#1}{}}{{\ensuremath{\hat{X}_{#2|#2}}}}{\ensuremath{\hat{X}_{#1,#2|#2}}}
}
\newcommand{\filtcov}[2][]
{\ifthenelse{\equal{#1}{}}{\ensuremath{\Sigma_{#2|#2}}}{\ensuremath{\Sigma_{#1,#2|#2}}}}
\newcommand{\postmean}[3][]
{\ifthenelse{\equal{#1}{}}{\ensuremath{\hat{X}_{#2|#3}}}{\ensuremath{\hat{X}_{#1,#2|#3}}}
}
\newcommand{\postcov}[3][]
{\ifthenelse{\equal{#1}{}}{\ensuremath{\Sigma_{#2|#3}}}{\ensuremath{\Sigma_{#1,#2|#3}}}}


\newcommandx{\QEM}[4][1=,4=]{\ensuremath{\mathcal{Q}_{#1}(#4;#2 \, ; #3)}}

\def\tore{\mathbb{T}}
\def\btore{\mathcal{B}(\tore)}


\newcommandx\sequence[3][2=t,3=\zset]{\ensuremath{\left(#1_{#2}\right)_{#2 \in #3 }}}
\newcommandx\dsequence[4][3=t,4=\zset]{\ensuremath{\left( (#1_{#3}, #2_{#3})\right)_{#3 \in #4}}}
\newcommandx{\sequencen}[2][2=n\in\nset]{\ensuremath{\left(#1\right)_{#2}}}



\def\bpm{\left[\begin{matrix}}
\def\epm{\end{matrix}\right]}
\def\bma{\begin{matrix}}
\def\ema{\end{matrix}}

         

\newcommand{\be}{\begin{equation}}     
\newcommand{\ee}{\end{equation}}        

\newcommand{\eg}{\textit{e.g.}}



\newcommandx\lnorm[3][1=]{\left\lVert #2 \right\rVert^{#1}_{#3}}

\newcommand{\Id}{\mathrm{Id}}


\newcommandx\supnorm[2][1=]{| #2 |^{#1}_\infty}

\newcommandx\ball[3][1=]{\mathrm{B}_{#1} (#2,#3)}

\newcommandx{\prohosym}[1][1=]{{\boldsymbol\rho}_{#1}}
\newcommandx{\proho}[3][1=]{\prohosym{#1}\left(#2,#3\right)}


\newcommandx{\pp}[1][1=\mu]{\ensuremath{#1\-\mathrm{a.e.}}}


\renewcommand{\-}{\mbox{-}}

\newcommandx{\as}[1][1=\PP]{\ensuremath{#1\-\mathrm{a.s.}}}

\newcommandx{\oscnorm}[3][1=,3=]{\operatorname{osc}^{#1}_{#3}\left(#2\right)}

\newcommandx{\tvdist}[3][1=]{\ensuremath{d^{#1}_{\mathrm{TV}}}(#2,#3)}

\newcommandx{\VnormFunc}[3][1=]{\ensuremath{\left|#2\right|_{\mathrm{#3}}^{#1}}}

\newcommand{\continuousfunctionset}[1]{\mathrm{C}_b(#1)}

\newcommand{\lipschitzfunctionset}[1]{\mathrm{Lip}(#1)}
\newcommand{\boundedlipschitzfunctionset}[1]{\mathrm{Lip}_b(#1)}

\newcommand{\simplemeasfunctionsetarg}[1]{\mathbb{F}_s\left(#1\right)}

\newcommandx\functionsetarg[2][1=]{
\ifthenelse{\equal{#1}{c}}{\continuousfunctionset{\mathsf{#2}}}
{\ifthenelse{\equal{#1}{bc}}{\mathrm{C}_b(#2)}
{\ifthenelse{\equal{#1}{u}}{\mathrm{U}(#2)}
{\ifthenelse{\equal{#1}{bu}}{\mathrm{U}_b(#2)}
{\ifthenelse{\equal{#1}{l}}{\lipschitzfunctionset{#2}}
{\ifthenelse{\equal{#1}{bl}}{\boundedlipschitzfunctionset{#2}}
{\mathbb{F}_{#1}(#2)}
}}}}}}

\newcommandx\functionsetspec[1][1=]{
\ifthenelse{\equal{#1}{c}}{\mathrm{C}}
{\ifthenelse{\equal{#1}{bc}}{\mathrm{C}_b}
{\ifthenelse{\equal{#1}{u}}{\mathrm{U}}
{\ifthenelse{\equal{#1}{bu}}{\mathrm{U}_b}
{\ifthenelse{\equal{#1}{l}}{\mathrm{Lip}}
{\ifthenelse{\equal{#1}{bl}}{\mathrm{Lip}_b}
{\mathbb{F}_{#1}}
}}}}}}

\newcommandx{\taboo}[3][1=,3=]{\left(\leftidx{_#1}{#2}{}\right){^{#3}}}

\newcommandx\vectornorm[2][1=]{\left| #2 \right|^{#1}}  

\newcommand{\ensemble}[2]{\left\{#1\,:\eqsp #2\right\}}
\newcommand{\set}[2]{\ensemble{#1}{#2}}




\newcommandx{\plim}[1]{\ensuremath{\stackrel{#1\-\text{prob}}{\longrightarrow}}}
\newcommandx{\dlim}[1]{\ensuremath{\stackrel{#1}{\Longrightarrow}}}


\newcommand{\mae}{\ensuremath{\mathrm{a.e.}}}

\newcommandx\measureset[3][1=\mathrm{s},3=]{\mathbb{M}^{#3}_{#1}(#2)}
\newcommandx\measuresetmetric[2][1=1]{\mathbb{M}_{#1}(\mathcal{B}(\mathsf{#2}))}  
\newcommandx\measuresetspec[1][1=\mathrm{s}]{\mathbb{M}_{#1}}





\newcommand{\abs}[1]{\left\vert #1 \right\vert}


\newcommandx\canonicalkernel[1][1=P]{\mathbb{K}_{#1}}

\newcommand{\lrav}[1]{\left|#1 \right|}
\newcommand{\lr}[1]{\left(#1 \right)}
\newcommand{\lrb}[1]{\left[#1 \right]}

\newcommand{\lrcb}[1]{\left\{#1 \right\}}
\newcommand{\leb}{\mathrm{Leb}}
\newcommand{\lebtore}{\leb_{\tore}}


\externaldocument[][nocite]{filtering-hilbert-esaim-v3}

\usepackage{mathrsfs}
\usepackage{stmaryrd}

\makeatletter
\DeclareFontEncoding{LS1}{}{}
\DeclareFontSubstitution{LS1}{stix}{m}{n}
\DeclareMathAlphabet{\kernelope}{LS1}{stixscr}{m}{n}
\makeatother

\numberwithin{equation}{section}

\title{Hilbert space-valued fractionally integrated autoregressive  moving
  average processes with long memory operators}

\author{Amaury Durand\,\footnote{LTCI, Telecom Paris, Institut Polytechnique de
    Paris.}\;\,\footnote{EDF R\&D, TREE, E36, Lab Les Renardieres, Ecuelles, 77818 Moret sur Loing, France. } \and François Roueff\,$^*$
   \addtocounter{footnote}{-3}
   \footnote{Math Subject Classification. Primary: 60G22, 60G12; Secondary: 47A56, 46G10} \addtocounter{footnote}{-1}
   \footnote{Keywords. ARFIMA processes. Long memory. Spectral representation of random processes. Functional time series. Hilbert space}  \addtocounter{footnote}{-1}}

\begin{document}
\sloppy
\maketitle

\abstract{Fractionally integrated autoregressive moving average
  (FIARMA) processes have been widely and successfully used to model
  and predict univariate time series exhibiting long range
  dependence. Vector and functional extensions of these processes have
  also been considered more recently. Here we study these processes by
  relying on a spectral domain approach in the case where the
  processes are valued in a separable Hilbert space $\cH_0$. In this
  framework, the usual univariate long memory parameter $d$ is
  replaced by a long memory \emph{operator} $D$ acting on $\cH_0$,
  leading to a class of $\cH_0$-valued FIARMA($D,p,q$) processes, where
  $p$ and $q$ are the degrees of the AR and MA polynomials. When $D$
  is a normal operator, we provide a necessary and sufficient
  condition for the $D$-fractional integration of an $\cH_0$-valued
  ARMA($p,q$) process to be well defined. Then, we derive the best
  predictor for a class of causal FIARMA processes and study how this
  best predictor can be consistently estimated from a finite sample of
  the process. To this end, we provide a general result on quadratic
  functionals of the periodogram, which incidentally yields a result
  of independent interest. Namely, for any ergodic stationary process
  valued in $\cH_0$ with a finite second moment, the empirical
  autocovariance operator converges, in trace-norm,
  to the true autocovariance operator almost surely at each lag.
  
\tableofcontents
\section{Introduction}
Over the past several decades, the study of weakly stationary time series valued in a separable
Hilbert space has been an active field of research. For example, functional ARMA processes were discussed in
\cite{bosq00, spangenberg_strictly_2013, klepsch_prediction_2017}, a
spectral theory was detailed in \cite{panaretos13,
  panaretos13Cramer, these-tavakoli-2015} and several estimation
methods were studied in \cite{hormann2010weaklydependent,
  horvath2012inference, hoermann15, horvath-clt,KOKOSZKA-review,
  BERKES2016150, kokozska2019-inv, VANDELFT2019}. However, these
references mainly focus on short memory processes. The study of
long memory processes valued in a separable Hilbert space is a more
recent topic as seen in
\cite{rackauskas2011,characiejus2013central,characiejus2014operator,
  duker_limit_2018, Li-long-memory-2020}. More specifically, in \cite[Section~4]{Li-long-memory-2020},  the fractionally
integrated autoregressive moving average (often  abbreviated as ARFIMA, but we prefer to use \fiarma\ for reasons that will be made explicit in \Cref{rem-def-fiarma-fiarma})
processes are generalized to the
case of \emph{curve}, or \emph{functional}, time series. In short, the
authors consider the functional case
in which the Hilbert space is an $L^2$ space of real valued functions
defined on a compact subset of $\rset$, say $[0,1]$, and they introduce
the time series $(X_t)_{t\in\zset}$ valued in this Hilbert
space defined by
\begin{equation}\label{eq:farima-robinson-fiarma}
X_t(v) = Y_t+\sum_{k=1}^{\infty} \frac{\prod_{j=0}^{k-1}(d+j)}{k!} Y_{t-k}(v)\;, \quad t \in \zset, v \in [0,1]\;,
\end{equation}
where $-1/2 < d < 1/2$ and $Y_t$ is a functional ARMA process. As pointed out in
\cite[Remark 9]{Li-long-memory-2020}, taking the same $d$ for all
$v\in[0,1]$ in~(\ref{eq:farima-robinson-fiarma})
is highly restrictive compared to other long memory processes recently
introduced. For instance in \cite{characiejus2013central,characiejus2014operator}, 
they consider long memory processes of the form
$$
X_t(v)=\sum_{k=0}^\infty(1+k)^{-\mathrm{n}(v)}\;\epsilon_{t-k}(v)\;,\quad t\in\zset\;,\;v\in\Vset\;,
$$
where $(\Vset,\Vsigma,\xi)$ is a $\sigma$-finite measure space, and
$(\epsilon_t)_{t\in\zset}$ is a white noise valued in
$L^2(\Vset,\Vsigma,\xi)$. Since the ratio
in~(\ref{eq:farima-robinson-fiarma}) is asymptotically equivalent to
$(1+k)^{-1+d}$ as $k\to\infty$, this new process is, in fact, close to
the previous one in the case where $n(v)=d-1$ for all $v\in V$.
A formulation that is not restricted to an $L^2$ space
was proposed in \cite{duker_limit_2018} where the author considers long memory processes of the form
\begin{equation}
  \label{eq:duker-fiarma}
X_t=\sum_{k=0}^\infty(1+k)^{-N}\;\epsilon_{t-k}\;,\quad t\in\zset\;.  
\end{equation}
Here, $(\epsilon_t)_{t\in\zset}$ is a white noise valued in a
separable Hilbert space $\cH_0$ and $N$ is a bounded normal operator
on $\cH_0$.  This suggests defining \fiarma\ processes
in~(\ref{eq:farima-robinson-fiarma}) with $d$ replaced by a function
$\mathrm{d}(v)$, or in the case where it is valued in an arbitrary
separable Hilbert space $\cH_0$, by a bounded normal operator $D$
acting on this space.

Therefore, in this paper, we fill this gap by providing a definition
of \fiarma\ processes valued in a separable Hilbert space $\cH_0$ with
a long memory operator $D$, taken as a bounded linear operator on
$\cH_0$. If $D$ is normal, then we can rely on its singular value
decomposition and find necessary and sufficient conditions to ensure
that the $\cH_0$-valued FIARMA process with long memory operator $D$
is well defined.  This allows us to compare \fiarma\ processes with
the processes defined by~(\ref{eq:duker-fiarma}) as in
\cite{duker_limit_2018}.  Our definition relies on linear filtering in
the spectral domain.  It is a well known fact that linear filtering of
real valued time series in the time domain is equivalent to pointwise
multiplication by a transfer function in the frequency domain. This
duality also applies to Hilbert space valued time series using a
proper spectral representation for them. In this context, pointwise
multiplication becomes a pointwise application of an operator-valued
transfer function defined on the set of frequencies. A complete
account is provided in \cite{surveyREFnew}. Here, we rely on the
spectral approach to define a $D$-fractional integration filter acting
on a weakly stationary process $X$ valued in $\cH_0$. We provide
necessary and sufficient conditions for this filter to be well defined
on a $X$, when $X$ is a $\cH_0$-valued ARMA process and $D$ a normal
operator. When the ARMA process is causal, we derive the best
predictor of $X_t$ given its past $(X_s)_{s<t}$. It is thus of
interest to investigate whether this best predictor can be
consistently estimated from a finite sample $X_1,\dots,X_n$. We
provide a positive answer to this question when the long memory
parameter operator $D$ has a positive definite real part, under mild
additional conditions. To this end, we study quadratic functionals
based on the periodogram of $X_1,\dots,X_n$. A result of this study,
which is of independent interest, and appears to be novel based on our
up-to-date-knowledge, is the following:
\begin{theorem}\label{thm:1}
  Let $\cH_0$ be a separable Hilbert space and let $(X_t)_t$ be an $\cH_0$-valued
  ergodic stationary process defined on $(\Omega,\cF,\PP)$ and satisfying
  $\PEarg{\norm{X_0}_{\cH_0}^2}<\infty$. Let us define, for all
  $n\geq1$ and $1\leq k\leq n$,
  \begin{align}
      \label{eq:centering}
                X^c_{n,k}=X_k-\frac1n\sum_{j=1}^nX_j \;.
  \end{align}
  Then, we have, for all $h\in\zset$, 
  \begin{align}
  \label{eq:cv-as-emp-cov}
 \lim_{n\to\infty} \frac1n\sum_{\stackrel{1\leq k,k'\leq
        n}{k-k'=h}}\lr{X^c_{k,n}}\otimes
     \lr{X^c_{k',n}} = \cov{X_h}{X_0} \quad
     \text{in $\cS_1(\cH_0)$}\;,\quad\as\;,
  \end{align}
      where $\cS_1(\cH_0)$ is the space of trace-class operators endowed
      with the trace-norm. 
\end{theorem}

This paper is organized as follows.  We first recall in
\Cref{sec:preliminaries-fiarma} the necessary definitions and facts on
operator theory and linear filtering needed for our purpose. Then, the
construction of \fiarma\ processes is introduced and discussed in
\Cref{sec:construction-fiarma} with a focus on the case where the long
memory operator is normal. In \Cref{sec:estimation},
the prediction of FIARMA processes is studied. To this end, in
\Cref{sec:est-main-assumpt-main}, we provide
general results for parametric contrast estimation in the spectral
domain, based on a finite sample. Then, in
\Cref{sec:examples-est-application}, we show how to apply these results
for FIARMA prediction. 
Finally, proofs are provided in
\Cref{sec:proofs-fiarma}. In particular, \Cref{thm:1} is proven in
\Cref{sec:proof-thm:1}.

\section{Preliminaries and useful notation}\label{sec:preliminaries-fiarma}

\subsection{Operators, measurability and
  integrals}\label{sec:ope-integration-fiarma}
Throughout this paper, we denote by $\cL_b(\cH_0,\cG_0)$ the set
of continuous linear operators defined on the separable (complex)
Hilbert space $\cH_0$ onto the separable (complex) Hilbert space
$\cG_0$. The operator norm on $\cL_b(\cH_0,\cG_0)$ is denoted by
$\norm{\cdot}_{\infty}$. We denote by $\cS_{\infty}(\cH_0,\cG_0)$ its
subset of \emph{compact} operators, and by $\cS_1(\cH_0,\cG_0)$ and
$\cS_2(\cH_0,\cG_0)$, its subsets of \emph{trace-class} and
\emph{Hilbert-Schmidt} operators, respectively, with their respective
norms denoted by $\norm{\cdot}_1$ and $\norm{\cdot}_2$. We follow the
usual convention of omitting $\cG_0$ in the notation of operator
spaces when $\cG_0=\cH_0$.  We use the notation $\aop^\adjoint$ for
the Hermitian adjoint of an operator $\aop \in \cL_b(\cH_0,\cG_0)$.
An operator $\aop\in\cL_b(\cH_0)$ is said to be \emph{normal} if
$\aop\aop^\adjoint=\aop^\adjoint\aop$ and we denote by $\cN(\cH_0)$
the set of normal bounded operators. We further denote by
$\cL_b^+(\cH_0)$, $\cS_1^+(\cH_0)$ and $\cS_2^+(\cH_0)$ the sets of
positive, positive trace-class and positive Hilbert-Schmidt
operators. Here \emph{positive} refers to
\emph{positive-semidefinite}, that is, $\aop$ is positive if
$\pscal{\aop x}{x}_{\cH_0}\geq0$ for all $x$. For a positive
operator $\aop$, the operator $\aop^{1/2}$ is the unique positive
operator satisfying $\left(\aop^{1/2}\right)^2 = \aop$. A
general and detailed presentation of operator theory can be found in
\cite{Weidmann-operators-hilbert}.

We will make extensive use of integrals of functions valued in a
Banach space (see \cite[Chapter~1]{dinculeanu2011vector} for
details). Given a measure space $(\Lambda, \calA,\mu)$, a Banach space
$(E,\norm{\cdot}_E)$ and $p\in[1,\infty]$, we denote by
$\cL^p(\Lambda, \calA, E, \mu)$ the space of functions
$f : \Lambda \to E$ which are Borel-measurable
such that $\int \norm{f}_E^p \, \rmd \mu$ (or
$\essuparg{\mu} \norm{f}_E$ for $p=\infty$) is finite. Its quotient
space for the $\mu$-a.e. equality is denoted by
$L^p(\Lambda, \calA, E, \mu)$. We use the same
notation for $E=\cL_b^+(\cH_0)$, $\cS_1^+(\cH_0)$ or $\cS_2^+(\cH_0)$,
in which case $L^p(\Lambda, \calA, E, \mu)$ is a cone subset of the
corresponding $L^P$ space. 

In the particular case where
$E=\cL_b(\cH_0,\cG_0)$ for two separable Hilbert spaces $\cH_0,\cG_0$,
we also use a weaker notion of measurability. Namely, we say
that a function $\Phi : \Lambda \to \cL_b(\cH_0,\cG_0)$ is
\emph{simply measurable} if for all $x \in \cH_0$,
$\lambda \mapsto \Phi(\lambda)x$ is measurable as a $\cG_0$-valued
function. The set of simple measurable functions from $(\Lambda,
\calA)$ to $\cL_b(\cH_0,\cG_0)$ is denoted by
$\simplemeasfunctionsetarg{\Lambda, \calA, \cH_0,\cG_0}$ where, again,
we ommit $\cG_0$ if $\cH_0=\cG_0$. A mapping $\Phi :
  \Lambda \to E$ with $E=\cS_1(\cH_0,\cG_0)$ or
$E=\cS_2(\cH_0,\cG_0)$ is simply measurable if and only if it is Borel
measurable (see \Cref{lem:meas-schatten} in
\cite{surveyREFnew}). A useful consequence is that, if
$\Phi \in L^1(\Lambda, \cA, \cS_1^+(\cH_0), \mu)$, then the function
$\Phi^{1/2} : \lambda \mapsto \Phi(\lambda)^{1/2}$ is in
$L^2(\Lambda, \cA, \cS_2^+(\cH_0), \mu)$.

\subsection{Linear filtering of Hilbert space-valued time series in the spectral domain}
\label{sec:line-filt-hilb}

This section gathers the spectral theory used for
linear filtering of times series valued in a separable Hilbert
space. We refer the reader to
\cite[\Cref{sec:spectral-analysis}]{surveyREFnew} for details. In the
following, we denote by $\tore$ the set $\rset/2\pi\zset$, which can be
represented by an interval such as $[-\pi,\pi)$. Let
$(\Omega, \cF, \PP)$ be a probability space and $\cH_0$ a separable
Hilbert space. We recall that the expectation of
$X \in L^2(\Omega, \cF, \cH_0, \PP)$ is the unique vector
$\PE[X] \in \cH_0$ satisfying
$$
\pscal{\PE[X]}{x}_{\cH_0} = \PE[\pscal{X}{x}_{\cH_0}], \quad \text{for all } x \in \cH_0\;.
$$
The covariance
operator between $X,Y \in L^2(\Omega, \cF, \cH_0, \PP)$ is the unique
linear operator $\Cov{X}{Y} \in \cL_b(\cH_0)$ satisfying
$$
\pscal{\Cov{X}{Y} y}{x}_{\cH_0} = \Cov{\pscal{X}{x}_{\cH_0}}{\pscal{Y}{y}_{\cH_0}}, \quad \text{for all } x,y \in \cH_0 \; .
$$
A process  $X := (X_t)_{t \in \zset}$ is
   said to be an $\cH_0$-valued, weakly stationary process if
\begin{enumerate}[label=(\roman{enumi})]
\item For all $t \in \zset$, $X_t \in L^2(\Omega, \cF, \cH_0, \PP)$.
\item For all $t \in \zset$, $\PE[X_t] = \PE[X_0]$. We say that $X$ is centered if $\PE[X_0] = 0$.
\item For all $t,h \in \zset$, $\cov{X_{t+h}}{X_{t}} = \cov{X_h}{X_0}$.
\end{enumerate}
We denote by $\cM(\Omega, \cF, \cH_0, \PP)$ the space of all centered
random variables in $L^2(\Omega, \cF, \cH_0, \PP)$. Let $\cH=\cM(\Omega, \cF, \cH_0, \PP)$ and $X = (X_t)_{t \in \zset} \in \cH^\zset$
be a centered, weakly stationary, $\cH_0$-valued time series.
As explained in \cite[\Cref{sec:spectral-analysis}]{surveyREFnew}, a
spectral representation for $X$ amounts to define a random Gramian-orthogonally scattered measure $\hat{X}$ on $(\tore,\btore)$ such that 
\begin{equation}
  \label{eq:sp-rep}
X_t = \int \rme^{\rmi\lambda\,t} \; \hat{X}(\rmd \lambda)\quad\text{for all}\quad t \in \zset\;.  
\end{equation}
The intensity measure $\nu_X : \btore \to \cS_1^+(\cH_0)$ of $\hat{X}$
is called the \emph{spectral operator measure} and is characterized by
the identity
$$
\Cov{X_h}{X_0} = \int \rme^{\rmi h \lambda}\,\nu_X(\rmd\lambda) \;, \quad \text{for all } h\in\zset\;. 
$$
The spectral operator measure is a trace-class Positive
Operator-Valued Measure (\povm) in the sense that it is a mapping
from $\btore$ to $\cS_1^+(\cH_0)$ which is $\sigma$-additive for the
$\norm{\cdot}_1$-norm. Note that, in this case, the mapping
$\norm{\nu_X}_1 : A \mapsto \norm{\nu_X(A)}_1$ is a finite
non-negative measure. Throughout this paper, we use the
Radon-Nikodym property of the trace-class \povm\ $\nu_X$ which is a
consequence of Theorem 1 in \cite[Chapter~III,
Section~3]{diestel1977vector}. Namely, for any $\sigma$-finite
non-negative measure $\mu$ on $(\tore, \btore)$, which dominates
$\norm{\nu_X}_1$, there exists a unique
$g \in L^1(\tore,\btore, \cS_1^+(\cH_0), \mu)$ such that, for all
$A \in \btore$, $\nu_X(A) = \int_A g \, \rmd \mu$. In this case, we
say that $g$ is the \emph{spectral operator density} of $X$ with
respect to $\mu$, and we write $\rmd\nu_X=g\,\rmd\mu$. In the following, when we say that $g$ is the
spectral operator density of $X$ with respect to a $\sigma$-finite
non-negative measure $\mu$, it is implicitly assumed that $\mu$
dominates $\norm{\nu_X}_1$.

Let us now briefly introduce the linear filtering in the spectral
domain. We only state the facts that will be useful in the following
and refer the reader to \cite{surveyREFnew} for further details. The spectral
representation~(\ref{eq:sp-rep}) can be extended to define a
Gramian-isometric mapping from the 
modular \emph{spectral domain} $\widehat{\cH}^X$ to the modular \emph{time domain}
$\cH^X$, also denoted as an integral
with respect to $\hat{X}$, namely,
$$
Y = \int \Phi(\lambda) \hat{X}(\rmd \lambda)\;,\quad Y\in\cH^X\,,\;\Phi\in\widehat{\cH}^X\;.
$$
Here, $\cH^X$ is the smallest closed linear subspace of $\cM(\Omega,
\cF, \cH_0, \PP)$, which contains $\set{X_t}{t\in\zset}$ and is stable through the left multiplication by
any operator of $\cL_b(\cH_0)$. The space $\widehat{\cH}^X$ is its
spectral counterpart, a space of operator-valued functions defined on
$(\tore,\borel(\tore))$ which only depends on $\nu_X$ and is
stable through the same module action, namely, through left multiplication by an
operator of $\cL_b(\cH_0)$. Conversely, given
an operator-valued function $\Phi$ defined on $(\tore,\borel(\tore))$,
we denote by $\processtransfer{\Phi}(\Omega,\cF,\PP)$ the class of all
centered weakly stationary processes $X$ such that
$\Phi\in\widehat{\cH}^X$. Then, the time-shift invariant linear filter with
\emph{transfer operator function} $\Phi$ is the mapping defined on 
$\processtransfer{\Phi}(\Omega,\cF,\PP)$ by mapping a centered weakly
stationary process $X$ (the input) to the  centered weakly
stationary process $Y$ (the output) defined by
$$
Y_t=\int \rme^{\rmi t\lambda}\Phi(\lambda)\;\hat{X}(\rmd\lambda)\;,\qquad t\in\zset\;,
$$
which we also write 
\begin{equation}\label{eq:time-domains-notation-transfer-fiarma}
Y=\filterprocess{\Phi}(X) \quad \text{or} \quad \hat Y(\rmd\lambda)=\Phi(\lambda)\hat X(\rmd\lambda) \; .
\end{equation}
We will use the following result, where we characterize the domain of definition of a
filter $\filterprocess{\Phi}$ in the case where $\Phi$ is valued in
$\cL_b(\cH_0)$. It follows by applying
\cite[\Cref{prop:carac-processtransfer}]{surveyREFnew} with
$\lcag=\zset$ and $\cG_0=\cH_0$.
\begin{proposition}
  \label{prop:carac-processtransfer-discrete}
  Let $\cH_0$ be a separable Hilbert space, $(\Omega,\cF,\PP)$ be a
  probability space, and
  $\Phi\in\simplemeasfunctionsetarg{\tore, \borel(\tore),
    \cH_0}$. Let  $X$ be an $\cH_0$-valued centered weakly stationary
  process admitting $g_X$ as  a spectral operator density 
  with respect to a $\sigma$-finite non-negative measure $\mu$ on
  $(\tore,\btore)$. Then, the mapping $\norm{\Phi g_X \Phi^\adjoint}_1$ is
  measurable from $(\tore,\btore)$ to
  $(\rset,\borel(\rset))$, and we have
  $X\in\processtransfer{\Phi}(\Omega,\cF,\PP)$ if and only if
  $  \int \norm{\Phi g_X \Phi^\adjoint}_1\;\rmd\mu<\infty$.
\end{proposition}

\section{Hilbert space-valued  \fiarma\ processes}\label{sec:construction-fiarma}
In this section, we propose a definition of \fiarma\ processes valued
in a separable Hilbert space thus extending the definition of
\cite[Section~4]{Li-long-memory-2020} to an operator long memory
parameter. This definition is introduced in \Cref{sec:arma-fiarma}
where we also recall known results on the existence of ARMA processes.
We then state the main results, namely 1)
\Cref{thm:cns-fi-operator-functional-fiarma} where necessary and
sufficient conditions are given for a weakly stationary $\cH_0$-valued
process $X$ to belong the domain of definition of the fractional
integration operator filter, 2)
\Cref{cor:cns-arma-fi-operator-functional-fiarma} where we specify
these conditions to
the case where $X$ is an ARMA process, thus ensuring the existence of
\fiarma\ processes, and 3) \Cref{prop:longmemory-fiarma} where we
compare the obtained \fiarma\
processes to the processes introduced in \cite{duker_limit_2018}. The
first two points are found in \Cref{sec:hilbert-valued-fiarma} and the third in
\Cref{sec:other-long-memory-fiarma}.

\subsection{Definition of \fiarma\ processes}\label{sec:arma-fiarma}
Let $\cH_0$ be a separable Hilbert space. In the following, for all $D\in\cL_b(\cH_0)$ and
$z\in\cset\setminus[1,\infty)$, we will use 
$$
(1-z)^D=\exp(D\ln(1-z)) = \sum_{k=0}^\infty \frac1{k!}\;(D\ln(1-z))^k\;,
$$
where $\ln$ denotes the principal complex logarithm, so that $z\mapsto\ln(1-z)$
is holomorphic on $\cset\setminus[1,\infty)$, and so is
$z\mapsto(1-z)^D$, as a $\cL_b(\cH_0)$-valued function (see
\cite[Chapter~1]{gohberg-leiterer09} for an overview on the subject). Let us now introduce the fractional integration operator transfer function. 

\begin{definition}[Fractional integration operator transfer function]
  Let $\cH_0$ be a separable Hilbert space and $D \in \cL_b(\cH_0)$. We
  define the $D$-order fractional integration operator transfer
  function $\fracintoptransfer{D}$ by
$$
\fracintoptransfer{D}(\lambda)=
\begin{cases}
\left(1 - \rme^{-\rmi \lambda} \right)^{-D} & \text{ if
$\lambda\neq0$,}\\
0                                           & \text{ otherwise.}
\end{cases}
$$
\end{definition}
Using the properties of $z\mapsto(1-z)^D$ recalled previously, we see
that $\fracintoptransfer{D}$ is a mapping from $\tore$ to
$\cL_b(\cH_0)$, continuous on $\tore\setminus\{0\}$.  Then, we have
$\fracintoptransfer{D}\in\simplemeasfunctionsetarg{\tore, \btore,
  \cH_0}$ and we can define the filter
$\filterprocess {\fracintoptransfer{D}}$ as
in~(\ref{eq:time-domains-notation-transfer-fiarma}) of which the domain of
definition are the centered weakly stationary $\cH_0$-valued processes
$X\in\processtransfer{\fracintoptransfer{D}}(\Omega,\cF,\PP)$.
Since $\fracintoptransfer{D}$ has a singularity at the null frequency,
the domain $\processtransfer{\fracintoptransfer{D}}(\Omega,\cF,\PP)$ 
is not obvious. For
instance, in the scalar case, it is well known that if $X$ has a
positive and continuous spectral density at the null frequency, then
$\filterprocess{\fracintoptransfer{d}}(X)$ is well defined if and only if $d<1/2$.
We provide a complete description of
$\processtransfer{\fracintoptransfer{D}}(\Omega,\cF,\PP)$ in
\Cref{sec:hilbert-valued-fiarma} when $D$ is a normal operator. 

A \emph{fractionally integrated autoregressive moving average} (FIARMA)
process is simply the output of the filter in the case where $X$ is an
$\cH_0$-valued autoregressive moving average (ARMA) process. Let us
first recall a basic result on the existence of weakly stationary
ARMA processes (see \cite[Corollary~2.2]{spangenberg_strictly_2013}). 
\begin{theorem}
  Let $\cH_0$ be a separable Hilbert space and $p,q$ be two positive
  integers. Let $A_1,\dots,A_p\in\cL_b(\cH_0)$,
  $B_1,\dots,B_q\in\cL_b(\cH_0)$ and $Z=(Z_t)_{t\in\zset}$ be an
  $\cH_0$-valued white noise (i.e. a centered weakly stationary
  $\cH_0$-valued process with constant spectral density
  operator). Suppose that
  \begin{equation}
  \label{eq:Qdef-fiarma}
\arpol(z):=\Id_{\cH_0}-\sum_{k=1}^p A_k z^k\quad\text{is invertible for all $z\in\unitcircle$,  }
\end{equation} 
where  $\unitcircle= \set{z \in \cset}{\abs{z} = 1}$ is the complex
unit circle.
Then,
\begin{equation}
  \label{eq:inverse-pol-filter-fiarma}
  X_t - \sum_{k=1}^pA_kX_{t-k} = Z_t+\sum_{k=1}^qB_kZ_{t-k}\;,\quad t\in\zset\;,
\end{equation}
admits a unique weakly stationary solution. This solution is
called an $\cH_0$-valued ARMA($p,q$) process. 
\end{theorem}
Explicit constructions of the solution in the time domain can be
found in
\cite{bosq00,spangenberg_strictly_2013,klepsch_prediction_2017}, under
various assumption.
Using a spectral approach, with $\arpol$ as in~(\ref{eq:Qdef-fiarma})
and $\mapol(z):=\Id_{\cH_0}+\sum_{k=1}^p B_k z^k$, the solution is more directly given by
$$
\hat X(\rmd
\lambda)=\lrb{\arpol(\rme^{-\rmi\lambda})}^{-1}\mapol(\rme^{-\rmi\lambda})\hat Z(\rmd\lambda)\;,
$$
using the notation introduced
in~\eqref{eq:time-domains-notation-transfer-fiarma}. In the following,
for any integer $d\in\nset$, $\mathcal{P}_d(\cH_0)$ denotes the set of
polynomials $\apol$ of degree $d$ with coefficients in $\cL_b(\cH_0)$,
such that $\apol(0)=\Id_{\cH_0}$ and $\mathcal{P}_d^\ast(\cH_0)$
denotes the subset of all $\apol\in\mathcal{P}_d(\cH_0)$, which are
invertible on $\unitcircle$. In particular,~(\ref{eq:Qdef-fiarma}) is
equivalent to saying that
$\arpol\in\mathcal{P}_d^\ast(\cH_0)$. 
Time domain approaches for defining ARMA processes are easier to
derive when Condition~(\ref{eq:Qdef-fiarma}) is extended on the closed
unit disk, that is,
  \begin{equation}
  \label{eq:Qdef-fiarma-causal}
\arpol(z)=\Id_{\cH_0}-\sum_{k=1}^p A_k z^k\quad\text{is invertible for all $z\in\overline{\unitdisk}$,  }
\end{equation} 
where $\unitdisk := \set{z \in \cset}{\abs{z} < 1}$ and
$\overline{\unitdisk} = \set{z \in \cset}{\abs{z} \leq 1}$ denote the
open and closed complex unit discs of $\cset$, respectively. We do not
need Condition~(\ref{eq:Qdef-fiarma-causal}) for defining FIARMA
processes. However, we will assume that $\arpol$ and $\mapol$ satisfy
such a condition to derive predictors (see \Cref{sec:estimation}), as
in the well known case of univariate FIARMA processes.

We can now define \fiarma\ processes as follows.
  \begin{definition}[Hilbert space-valued \fiarma\ processes]
    \label{def:fiarma-fiarma} Let $\cH_0$ be a separable Hilbert space
    and $p,q$ be two non-negative integers.  Let $D\in\cL_b(\cH_0)$,
    $\mapol\in\mathcal{P}_q(\cH_0)$,
    $\arpol\in\mathcal{P}_p^\ast(\cH_0)$ and $Z$ be an $\cH_0$-valued
    centered white noise. Let $X$ be the ARMA($p,q$) process defined
    by
    $\hat
    X(\rmd\lambda)=[\arpol(\rme^{-\rmi\lambda})]^{-1}\mapol(\rme^{-\rmi\lambda})\hat
    Z(\rmd\lambda)$ and suppose that
    $X\in\processtransfer{\fracintoptransfer{D}}(\Omega,\cF,\PP)$. Then,
    the process defined by
    $Y=\filterprocess {\fracintoptransfer{D}}(X)$, that is, with
    spectral representation given by
    \begin{equation}
      \label{eq:fiarma-def-spectral-fiarma}
      \hat
      Y(\rmd\lambda)=\fracintoptransfer{D}(\lambda)[\arpol(\rme^{-\rmi\lambda})]^{-1}\mapol(\rme^{-\rmi\lambda})\hat
      Z(\rmd\lambda) \;,
    \end{equation}
    is called a \fiarma\ process of order $(p,q)$ with long memory
    operator $D$, abbreviated as \fiarma$(D,p,q)$.
  \end{definition}
  \begin{remark}\label{rem-def-fiarma-fiarma}
    \Cref{def:fiarma-fiarma} extends the usual definition of
    univariate ($\cset$ or $\rset$-valued) ARFIMA($p,d,q$) processes
    to the Hilbert space-valued case. In the general case, we use the
    acronym FIARMA to indicate the order of the operators in the
    definition~(\ref{eq:fiarma-def-spectral-fiarma}), where the
    fractional integration operator appears on the left of the
    autoregressive operator, which then is  on the left of the
    moving average operator. We also respected this order in the list
    of parameters $(D,p,q)$. Following this convention, an
    ARFIMA$(p,D,q)$ process is, in turn, defined as the solution
    of~(\ref{eq:inverse-pol-filter-fiarma}), with $Z$ defined as a
    FIARMA$(0,D,q)$ process. Having this convention in mind is
    important since the ARFIMA$(p,D,q)$ process does not coincide with
    the FIARMA$(D,p,q)$ process, except in highly unique instances such as
    the univariate case where all
    operators commute.
  \end{remark}
  \Cref{def:fiarma-fiarma} extends the definition of ARFIMA curve time
  series proposed in \cite{Li-long-memory-2020} where it is restricted
  to the case where $D$ is a scalar operator,
  that is $D : f \mapsto d \times f$ for a constant
  $-1/2<d<1/2$. In this particular case, it is rather straightforward to show that
  $X\in\processtransfer{\fracintoptransfer{D}}(\Omega,\cF,\PP)$ for
  any ARMA process $X$ by directly making use of
  \Cref{prop:carac-processtransfer-discrete}, However, in
  \Cref{ex:rem-existence-fiarma-particular-cases}\ref{item:li-robinson-case},
  it will be obtained as a special case of
  \Cref{cor:cns-arma-fi-operator-functional-fiarma}.
\subsection{Existence of \fiarma\ processes}
\label{sec:hilbert-valued-fiarma}
In this section, we provide a necessary and sufficient condition for
the existence of \fiarma\ processes as defined in
\Cref{def:fiarma-fiarma} in the case where $D$ is a \emph{normal}
operator. In this case, we can rely on the singular value
decomposition of $D$ (see \cite[Theorem~9.4.6,
Proposition~9.4.7]{conway1994course}). Namely, if $D \in \cN(\cH_0)$,
then there exists a $\sigma$-finite measure space
$(\Vset, \Vsigma, \xi)$, a unitary operator
$U : \cH_0 \to L^2(\Vset, \Vsigma, \xi)$ and
$\mathrm{d} \in L^\infty(\Vset, \Vsigma, \xi)$, such that
\begin{equation}
  \label{eq:SVD-normal-op-fiarma}
U D U^\adjoint = M_{\mathrm{d}}\;,  
\end{equation}
where $M_{\mathrm{d}}$ denotes the pointwise multiplicative operator
on $L^2(\Vset, \Vsigma, \xi)$ associated to $\mathrm{d}$, that is
$M_{\mathrm{d}} : f \mapsto \mathrm{d}\times f$. We say that $D$ has a
\emph{singular value function} $\mathrm{d}$ on
$L^2(\Vset, \Vsigma, \xi)$ with a \emph{decomposition operator}
$U$. Using the decomposition operator, we can rely on the process
$UX=(UX_t)_{t\in\zset}$ valued in $\cG_0:=L^2(\Vset, \Vsigma,
\xi)$. Note that $\cG_0$ is separable because it is isometrically
isomorphic to $\cH_0$ through the unitary operator $U$. It is straightforward to
check that, if $g_X$ is the spectral operator density of $X$ with
respect to a non-negative measure $\mu$ on $(\tore,\btore)$, then the
function $g_{UX} \in L^2(\tore,\btore,\cS_1^+(\cG_0),\mu)$ defined by
$g_{UX}(\lambda) = U g_X(\lambda) U^\adjoint$, for all
$\lambda\in\tore$, is the spectral operator density of $UX$ with
respect to $\mu$. Note that we can always find a function
$h\in L^2(\tore,\btore,\cS_2(\cG_0),\mu)$ such that
$g_{UX}(\lambda) = h(\lambda)[h(\lambda)]^\adjoint$ for $\mu$-$\mae$
$\lambda \in \tore$ (take \eg\ $h = g_{UX}^{1/2}$). Then,
\cite[Theorem~6.11]{Weidmann-operators-hilbert} gives that, for all
$\lambda\in\tore$, the operator $h(\lambda)$ can be written as an
integral operator with a kernel $\kernelope{h}(\cdot,\cdot;\lambda)$
in $L^2(\Vset^2, \Vsigma^{\otimes 2}, \xi^{\otimes 2})$. In the
following, we need the measurability of $\kernelope{h}$ on
$(\Vset^2\times\tore, \Vsigma^{\otimes 2}\otimes\btore)$ as given by
the following lemma.
\begin{lemma}\label{prop:existence-joint-kernelope-fiarma}
  Let $(\Vset,\Vsigma,\xi)$ be a $\sigma$-finite measure space and
  suppose that the Hilbert space $\cG_0=L^2(\Vset,\Vsigma,\xi)$ is
  separable. Let $K$ be a measurable function from $(\Lambda, \cA)$ to
  $\lr{\cS_2(\cG_0),\borel(\cS_2(\cG_0))}$. Then, there exists a function
  $\kernelope{K} : (v,v',\lambda)\mapsto\kernelope{K}(v,v';\lambda)$
  measurable from
  $(\Vset^2\times\Lambda,\Vsigma^{\otimes 2}\otimes\cA)$ to
  $(\cset,\borel(\cset))$ such that, for all $\lambda\in\Lambda$,
  $f\in\cH_0$ and $v\in\Vset$,
  \begin{equation}
    \label{eq:K-kernelope-identity}
    [K(\lambda)f](v)=\int \kernelope{K}(v,v';\lambda)\,f(v')\;\xi(\rmd v')\;.      
\end{equation}
Moreover, if $K\in\cL^2(\Lambda,\calA,\cS_2(\cG_0),\mu)$ for some
non-negative measure $\mu$ on $(\Lambda,\cA)$, then
$\kernelope{K}\in\cL^2(\Vset^2\times\Lambda,\Vsigma^{\otimes2}\otimes\calA,\xi^{\otimes2}\otimes\mu)$.
\end{lemma}
Based on this lemma, for all $\lambda\in\Lambda$,
the identity~(\ref{eq:K-kernelope-identity}) defines
$(v,v')\mapsto\kernelope{K}(v,v';\lambda)$ uniquely over $\Vset^2$ up to a
$\xi^{\otimes2}$-null set.
This allows us to introduce the following definition. 
\begin{definition}[Joint kernel of $\cS_2$-valued functions]
  \label{def:joint-kernels}
  Under the assumptions of
  \Cref{prop:existence-joint-kernelope-fiarma}, we call
  $\kernelope{K}$ the $\Lambda$-\emph{joint kernel} of $K$.
\end{definition}
Assuming that $D$ is normal allows us to characterize the domain of
definition of the $D$-order fractional integration operator filter, as
shown in the following result, which may be of independent interest.
\begin{theorem}\label{thm:cns-fi-operator-functional-fiarma}
  Let $\cH_0$ be a separable Hilbert space and
  $X=(X_t)_{t\in\zset}$ be a centered $\cH_0$-valued weakly stationary time
  series defined on $(\Omega,\cF,\PP)$ with spectral operator density
  $g_X$ with respect to a non-negative measure $\mu$ on
  $(\tore,\btore)$. Let $D$ be in $\cN(\cH_0)$ with singular value function
  $\mathrm{d}$ on $\cG_0:=L^2(\Vset, \Vsigma, \xi)$ and decomposition
  operator $U$.  Let $h\in L^2(\tore,\btore,\cS_2(\cG_0),\mu)$ be such that
  $\lambda\mapsto h(\lambda)[h(\lambda)]^\adjoint$ is the spectral operator density of $UX=(UX_t)_{t\in\zset}$ with respect to $\mu$, that is,
  $
  h(\lambda)[h(\lambda)]^\adjoint=U\,g_X(\lambda)\,U^\adjoint
  \quad\text{for }\mu\text{-}\mae\;\lambda\in\tore\;. 
  $
  Let $\kernelope{h}$ denote the $\tore$-joint kernel
  function of $h$. Then, the following assertions are
  equivalent.
  \begin{enumerate}[label=(\roman*)]
  \item \label{item:cns-fi-operator-functional1-fiarma} We have
    $X\in\processtransfer{\fracintoptransfer{D}}(\Omega,\cF,\PP)$.
  \item \label{item:cns-fi-operator-functional2-fiarma}  There exists
    $\eta\in(0,\pi)$ such that
    $$
\int_{\Vset^2\times\lr{(-\eta,\eta)\setminus\{0\}}}
|\lambda|^{-2\Re(\mathrm{d}(v))} \left|\kernelope{h}(v,v';\lambda)\right|^2\;\xi(\rmd
v)\xi(\rmd v')\mu(\rmd\lambda)<\infty\;.
$$
\item \label{item:cns-fi-operator-functional3-fiarma} We have
  \begin{equation}
  \label{eq:cns-fi-operator-functional-new-fiarma-pos}
\int_{\Vset^2\times\lr{(-\pi,\pi)\setminus\{0\}}}
|\lambda|^{-2\Re_+(\mathrm{d}(v))} \left|\kernelope{h}(v,v';\lambda)\right|^2\;\xi(\rmd
v)\xi(\rmd v')\mu(\rmd\lambda)<\infty\;\;,
\end{equation}
where, for all $z\in\cset$, $\Re_+(z)=\max\lr{0,(z+\bar{z})/2}$
denotes the non-negative real part of $z$.
\end{enumerate}
\end{theorem}
In the following theorem, we
specify \Cref{thm:cns-fi-operator-functional-fiarma} in the case where
$X$ is an ARMA process, as in  \cite{Li-long-memory-2020}, but we let
$D$ be any normal operator and not necessarily a scalar one. Our
necessary and sufficient condition relies on the following definition:
  \begin{equation}
    \label{eq:kernel_arma_around_zero-new-fiarma-defk_n}
    \aop_n\lr{\arpol,\mapol} =
    \lrcb{\lrb{[\arpol]^{-1}\,\mapol}\circ\exp}^{(n)}(0) \;,\quad n\in\nset\;,\arpol\in\mathcal{P}_p^\ast\;,\mapol\in\mathcal{P}_q\;,
  \end{equation}
where the exponent $^{(n)}$ here denotes the $n$-th derivative of the
mapping $z\mapsto [\arpol(\rme^{z})]^{-1}\,\mapol(\rme^z)$, which is
infinitely differentiable in a neighborhood of $z=0$ as a
$\cL_b(\cH_0)$-valued function, since
$\arpol\in\mathcal{P}_p^\ast(\cH_0)$. In fact we have
$$
\aop_0\lr{\arpol,\mapol}=
[\arpol(1)]^{-1}\mapol(1)\quad\text{and}\quad
\aop_n\lr{\arpol,\mapol} =\sum_{k=1}^n
b_{n,k}\;\lrb{[\arpol]^{-1}\,\mapol}^{(k)}(1)\;,\quad n\geq1\;,
$$
where $b_{n,k}$ are known positive rational coefficients obtained by taking the exponential
Bell $n$-order Bell polynomial at $(1,\dots,1)$. 

We have the following result.
\begin{theorem}\label{cor:cns-arma-fi-operator-functional-fiarma}
  Let $\cH_0$ be a separable Hilbert space.
  Let $X$ be an $\cH_0$-valued ARMA($p,q)$ process defined by
  $\hat
  X(\rmd\lambda)=\left[\arpol(\rme^{-\rmi\lambda})\right]^{-1}\mapol(\rme^{-\rmi\lambda})\hat
  Z(\rmd\lambda)$ with $\mapol\in\mathcal{P}_q(\cH_0)$,
  $\arpol\in\mathcal{P}_p^\ast(\cH_0)$ and $Z$ a white noise with
  covariance operator $\Sigma$ defined on $(\Omega,\cF,\PP)$.
  Let $D \in \cN(\cH_0)$, with singular value function
  $\mathrm{d}$ on $\cG_0:=L^2(\Vset, \Vsigma, \xi)$ and decomposition
  operator $U$.   
  Let $\sigma_n : v \mapsto \varsqrt{\PEarg{\abs{W_n(v,\cdot)}^2}}$
  where $W_n = U\,\aop_n\lr{\arpol,\mapol}\,Z_0$ is seen as a
  $\cset$-valued function defined on $\Vset\times\Omega$.
  Then, the two following assertions are equivalent:
  \begin{enumerate}[label=(\roman*),series=cns-fractional]
  \item\label{ass:cns-arma-fi-operator-functional-cond0-fiarma} We
    have
    $X\in\processtransfer{\fracintoptransfer{D}}(\Omega,\cF,\PP)$.
  \item\label{ass:cns-arma-fi-operator-functional-cond1-fiarma} For
    all $n\in\nset$, we have
    $\Re(\mathrm{d}) < n+1/2$, $\xi\text{-}\mae$ on $\{\sigma_n >
    0\}$ and
    \begin{equation}
      \label{eq:cns-arma-sigma_n}
      \int_{\{\Re(\mathrm{d}) < n+1/2\}} \frac{\sigma_n^2(v)}{1 + 2n - 2 \Re(\mathrm{d}(v))} \, \xi(\rmd v) < \infty\;.
    \end{equation}
  \end{enumerate}
\end{theorem}
Note that there is a slight abuse of notation in the definition of
$\sigma_n$ since the definition of measurability for a
$L^2(\Vset,\cV,\xi)$-valued random variable does not necessarily
ensure measurability of $W_n(v,\cdot)$ as a $\cset$-valued random
variable. This abuse of notation is common in the literature on
functional data analysis and is harmless because we can always find a
version of $W$ which is jointly measurable on
$(\Vset\times\Omega,\cV\otimes\cF)$, see
\Cref{prop:joint-meas-version-fiarma} in \Cref{sec:functional-time-series-fiarma}.
\begin{remark}\label{ex:rem-existence-fiarma-particular-cases}
Let us briefly comment on
\Cref{cor:cns-arma-fi-operator-functional-fiarma}.
\begin{enumerate}[label=(\arabic*)]
\item First, by definition of $W_n$ and
$\sigma_n$, we have $\sigma_n\in L^2(\Vset, \cV,\xi)$ and thus
Assertion~\ref{ass:cns-arma-fi-operator-functional-cond1-fiarma}
always holds
for $n$ large enough, namely, for all $n>\sup(\Re(\rmd))-1/2$
(recall that the singular value function of a normal operator is
bounded).
\item\label{item:WN-case} If $\mapol=\arpol$ is the unit polynomial, the ARMA process $X$ equals the white noise
  process $Z$, $\aop_n(\arpol,\mapol)=\Id_{\cH_0}$, and
  $\sigma_n=\sigma_0$ for all $n\in\nset$, so that
  Condition~\ref{ass:cns-arma-fi-operator-functional-cond1-fiarma}
  only needs to be verified for $n=0$. 
\item \label{item:finite-dim-case}
  In the
  $N$-dimensional case with $n$ finite, we have $\Vset=\{1,\dots,N\}$,
  $\xi$ is the counting measure on $\Vset$, and $U$ can be interpreted
  as a $n\times n$ unitary matrix, and $\mathrm{d}$ and $\sigma_n$ as
  $N$-dimensional
  vectors. Condition~\ref{ass:cns-arma-fi-operator-functional-cond1-fiarma}
  then says that $\Re(\mathrm{d}(k)) < n+1/2$ for all $n\in\nset$
  and 
  $k\in\{1,\dots,N\}$ such that $\sigma_n(k) > 0$. 
\item For the
  real univariate case ($N=1$, $D=d\in\rset$ in~\ref{item:finite-dim-case}),
  Condition~\ref{ass:cns-arma-fi-operator-functional-cond1-fiarma}
  says that $d < n_0+1/2$, where $n_0$ is the smallest $n$ such that $\sigma_n>0$.
  Ruling out the case where $Z$ is the null process (in which case  $\Sigma=0$
  and $\sigma_n=0$ for all $n\in\nset$), one can see that
  $n_0$ equals 0 if $\mapol(1)\neq0$ and $n_0$ equals
  the order of multiplicity of 1 as a root of $\mapol$ otherwise (in
  other words, it corresponds to the difference
  operator largest order contained in the  MA operator). 
  In particular, we find the usual $d<1/2$ condition for
  the existence of a weakly stationary ARFIMA$(p,d,q)$ model in the
  case where the underlying ARMA($p,q$) process is canonical
  ($\arpol$ and $\mapol$ do not vanish on the unit disk). If $n_0\geq1$, the
  usual convention is to include the difference operator as a negative
  exponent of the fractional integration operator hence leading to an
  ARFIMA$(p,d-n_0,q-n_0)$ with $d-n_0<1/2$. 
\item\label{item:li-robinson-case} We already
  mentioned in~(\ref{eq:farima-robinson-fiarma}) the case treated in
  \cite[Section~4]{Li-long-memory-2020}. In the setting
  of \Cref{cor:cns-arma-fi-operator-functional-fiarma}, it 
  corresponds to the case where $D=d\times\Id_{\cH_0}$ is a scalar operator on
  $\cH_0=\cG_0=L^2(\Vset,\borel(\Vset),\xi)$ for a compact subset $\Vset$ of
  $\rset$, $\xi$ being the Lebesgue measure on $\Vset$ and $-1/2<d<1/2$ (thus
  $\rmd(v)\equiv d$ and $U=\Id_{\cH_0}$). Under this assumption,
  Condition~\ref{item:finite-dim-case} trivially holds since
  $1+2n-2d>0$ and $\sigma_n\in L^2(\Vset,\borel(\Vset),\xi)$ for all
  $n\in\nset$.
\end{enumerate}
\end{remark}

\subsection{Other long memory processes}\label{sec:other-long-memory-fiarma}
  Several non-equivalent definitions of
  long rang dependence, or long memory, are available in the literature
  for time series. Some approaches focus on the behavior of the
  autocovariance function at large lags, others on the spectral
  density at low frequencies (see
  \cite[Section~2.1]{pipiras_taqqu_2017} and the references
  therein). Separating short range from long range dependence is often
  made more natural within a particular class of models. For instance,
  for a Hilbert space-valued process $Y = (Y_t)_{t \in \zset}$, one may rely
  on a causal linear representation, namely
\begin{equation}\label{eq:linear-process-fiarma}
  Y_t = \sum_{k=0}^\infty\aop_k \epsilon_{t-k}\, , \quad t \in \zset\;, \quad
 \end{equation}
where $\epsilon = (\epsilon_t)_{t \in \zset}$ is a centered white
noise valued in the separable Hilbert space $\cH_0$ and
$(\aop_k)_{k \in \zset}$ is a sequence of $\cL_b(\cH_0)$
operators.
A sufficient condition for 
convergence of this series in $\cM(\Omega, \cF, \cH_0, \PP)$ is that 
$\sum_{k=0}^\infty \norm{\aop_k}_{\infty} < \infty$, and this
assumption is referred to as the \emph{short range dependence} (or
short memory) case, in contrast to
\emph{long range dependence} (long memory) case, for which
$\sum_{k=0}^\infty \norm{\aop_k}_{\infty} = \infty$, under
which the convergence in~(\ref{eq:linear-process-fiarma}) is no
longer granted. The case where
$\aop_k = (k+1)^{-N}$ for some 
$N \in \cN(\cH_0)$ is investigated in \cite{duker_limit_2018}.
More precisely, let  $\mathrm{n}$ and $U$ be the singular value function and decomposition operator of $N$ on $\cG_0:=L^2(\Vset, \Vsigma, \xi)$. Assume  that 
  \begin{equation}\label{eq:ass-LRD-fiarma}
\mathrm{h} > \frac{1}{2} \text{ $\xi$-a.e.}\quad\text{and}\quad    \int_\Vset \frac{\sigma_W^2(v)}{2 \mathrm{h}(v) - 1} \, \xi(\rmd v) < \infty \; ,
\end{equation}
where $\mathrm{h} : v \mapsto \Re(\mathrm{n}(v))$ and $\sigma_W^2 : s \mapsto \PEarg{\abs{W(v,\cdot)}^2}$ with $W=U \epsilon_0$. Then, using the arguments of the proof of \cite[Lemma~A.1]{duker_limit_2018}, one can show that, for all $t \in \zset$, 
\begin{equation}\label{eq:long-memory-time-domain-duker-fiarma}
  Y_t = \sum_{k=0}^{\infty} (k+1)^{-N} \epsilon_{t-k}
\end{equation}
converges in  $\cM(\Omega,\cF, \cH_0, \PP)$.

In \cite[Theorem 2.1]{duker_limit_2018}, the author also studies the partial sums
of the process \eqref{eq:long-memory-time-domain-duker-fiarma} and exhibits asymptotic properties
which naturally extend the usual behavior observed for univariate long memory processes. 
In the following, we explain how the process (\ref{eq:long-memory-time-domain-duker-fiarma}) can be
related to a \fiarma(D,0,0) process. First, we prove 
that Condition~(\ref{eq:ass-LRD-fiarma}) also implies the existence of this
\fiarma\ process.
\begin{lemma}\label{lem:duker-fiarma-fiarma}
  Condition~(\ref{eq:ass-LRD-fiarma}) implies $\epsilon\in\processtransfer{\fracintoptransfer{D}}(\Omega,\cF,\PP)$ with $D=\Id_{\cH_0}-N$. 
\end{lemma}
We can now state a result which shows that the process defined by
(\ref{eq:long-memory-time-domain-duker-fiarma}) is closely related to a \fiarma\ process up to a bounded operator $C$ and to an additive short memory process.
\begin{proposition}\label{prop:longmemory-fiarma}
  Assume that (\ref{eq:ass-LRD-fiarma}) holds and define $Y=(Y_t)_{t\in\zset}$
  by~(\ref{eq:long-memory-time-domain-duker-fiarma}). Then, there exists $C \in \cL_b(\cH_0)$ and
$(\Delta_k)_{k \in \nset} \in \cL_b(\cH_0)^\nset$ with
$\sum_{k=0}^{\infty} \norm{\Delta_k}_{\infty} < \infty$ such that
$$
\filterprocess{\fracintoptransfer{D}}(\epsilon)=C\,Y+Z\;,
$$
where $Z$ is the  short memory process defined, for all $t \in \zset$, by $Z_t = \sum_{k= 0}^\infty \Delta_k \epsilon_{t-k}$.
\end{proposition}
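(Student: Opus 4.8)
The plan is to realise $\filterprocess{\fracintoptransfer{D}}(\epsilon)$ as an explicit causal linear process in the $\epsilon_{t-k}$ and then match its coefficients against the coefficients $(k+1)^{-N}$ defining $Y$. Since $D=\Id_{\cH_0}-N$ and $\Id_{\cH_0}$ commutes with $N$, for $\lambda\neq0$ one has $\fracintoptransfer{D}(\lambda)=(1-\rme^{-\rmi\lambda})^{-1}\,(1-\rme^{-\rmi\lambda})^{N}$. The operator series $(1-z)^{N}=\sum_{k\ge0}b_k(-N)z^k$, with $b_k(-N):=\frac1{k!}(-N)(-N+\Id_{\cH_0})\cdots(-N+(k-1)\Id_{\cH_0})$, has coefficients that are absolutely summable in $\cL_b(\cH_0)$: diagonalising $N$ by $U$ and using the uniform bound $|b_k(-a)|=\bigo{(k+1)^{-\Re(a)-1}}$ for $a$ in the (bounded) essential range of $\mathrm{n}$, together with $\Re(\mathrm{n})>1/2$; moreover $\sum_{k\ge0}b_k(-N)=0$ (the value at $z=1$ of $(1-z)^N$, which vanishes since $\Re(\mathrm{n})>0$). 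Multiplying by $(1-z)^{-1}$ shows that the partial sums $S_K(\lambda):=\sum_{k=0}^{K}b_k(D)\rme^{-\rmi\lambda k}$, where $b_k(D):=\frac1{k!}D(D+\Id_{\cH_0})\cdots(D+(k-1)\Id_{\cH_0})=\sum_{m=0}^{k}b_m(-N)$, obey
\[
S_K(\lambda)-\fracintoptransfer{D}(\lambda)=\frac{1}{1-\rme^{-\rmi\lambda}}\Bigl(\,\sum_{m=0}^{K}b_m(-N)\rme^{-\rmi\lambda m}-\rme^{-\rmi\lambda(K+1)}\sum_{m=0}^{K}b_m(-N)-(1-\rme^{-\rmi\lambda})^{N}\Bigr)\;,
\]
so that $\|S_K(\lambda)-\fracintoptransfer{D}(\lambda)\|_{\cL_b(\cH_0)}\to0$ for every $\lambda\neq0$ (the bracket tends to $0$ in operator norm, uniformly in $\lambda$).

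Next I would prove that $\sum_{k\ge0}b_k(D)\epsilon_{t-k}$ converges in $\cM(\Omega,\cF,\cH_0,\PP)$. As $\epsilon$ is a white noise with covariance operator $\Sigma$, orthogonality gives $\|\sum_{k=M}^{K}b_k(D)\epsilon_{t-k}\|_{\cM}^2=\sum_{k=M}^{K}\|b_k(D)\Sigma^{1/2}\|_2^2$; diagonalising by $U$, writing $\PCov(U\epsilon_0)=KK^\adjoint$ and applying \Cref{lem:kernel-cov-fiarma} to $W=U\epsilon_0$ yields $\|b_k(D)\Sigma^{1/2}\|_2^2=\int_\Vset|b_k(\mathrm{d}(v))|^2\,\sigma_W^2(v)\,\xi(\rmd v)$, where $\mathrm{d}:=1-\mathrm{n}$ is the singular value function of $D$. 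The uniform bound $|b_k(a)|=\bigo{(k+1)^{\Re(a)-1}}$ gives $\sum_{k\ge0}|b_k(a)|^2\le C\,(1-2\Re(a))^{-1}$, so $\sum_{k\ge0}\|b_k(D)\Sigma^{1/2}\|_2^2\le C\int_\Vset\sigma_W^2/(2\mathrm{h}-1)\,\rmd\xi<\infty$ by~(\ref{eq:ass-LRD-fiarma}). Hence $\sum_{k}b_k(D)\epsilon_{t-k}$ converges; by the isometry between the time and spectral domains its spectral representant is the $\mathsf{L}^2(\tore,\btore,\cL_b(\cH_0),\nu_\epsilon)$-limit $\Phi^\star$ of $S_K$. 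Since $\epsilon\in\processtransfer{\fracintoptransfer{D}}(\Omega,\cF,\PP)$ by \Cref{lem:duker-fiarma-fiarma}, $\fracintoptransfer{D}$ also belongs to that space; passing to a subsequence along which $S_K g_\epsilon^{1/2}\to\Phi^\star g_\epsilon^{1/2}$ $\mu$-a.e. and comparing with the pointwise limit of the previous step forces $\Phi^\star=\fracintoptransfer{D}$, i.e. $\filterprocess{\fracintoptransfer{D}}(\epsilon)_t=\sum_{k\ge0}b_k(D)\epsilon_{t-k}$ with convergence in $L^2$.

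To conclude I would set $C:=\Gamma(D)^{-1}$, the normal operator with $UCU^\adjoint=M_{1/\Gamma(\mathrm{d})}$ — bounded because $1/\Gamma$ is entire and $\mathrm{d}\in L^\infty$ — and $\Delta_k:=b_k(D)-C\,(k+1)^{-N}$. Diagonalising, $\|\Delta_k\|_{\cL_b(\cH_0)}=\essuparg{\xi}_{v}\bigl|b_k(\mathrm{d}(v))-(k+1)^{\mathrm{d}(v)-1}/\Gamma(\mathrm{d}(v))\bigr|$, and the Stirling expansion $b_k(a)=(k+1)^{a-1}/\Gamma(a)+\bigo{(k+1)^{\Re(a)-2}}$, uniform on bounded sets, together with $\Re(\mathrm{d})<1/2$ $\xi$-a.e., gives $\|\Delta_k\|_{\cL_b(\cH_0)}=\bigo{(k+1)^{-3/2}}$; hence $\sum_{k\ge0}\|\Delta_k\|_{\cL_b(\cH_0)}<\infty$, so $Z_t:=\sum_{k\ge0}\Delta_k\epsilon_{t-k}$ converges absolutely in $L^2$ and is the announced short-memory process. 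Finally $C\,Y_t=\sum_{k\ge0}C\,(k+1)^{-N}\epsilon_{t-k}$ by continuity of $C$ and \Cref{lem:other-long-memory-duker-fiarma}, and summing $b_k(D)=C\,(k+1)^{-N}+\Delta_k$ against $\epsilon_{t-k}$ and letting $K\to\infty$ in the three convergent series gives $\filterprocess{\fracintoptransfer{D}}(\epsilon)=C\,Y+Z$.

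The step I expect to be the main obstacle is the identification $\filterprocess{\fracintoptransfer{D}}(\epsilon)_t=\sum_{k}b_k(D)\epsilon_{t-k}$: the coefficients $b_k(D)$ only decay like $(k+1)^{-1/2}$ in operator norm, so absolute convergence is unavailable and one must extract $L^2$-convergence from white-noise orthogonality and~(\ref{eq:ass-LRD-fiarma}), then identify the limit through an a.e. argument in the spectral domain since no uniform estimate is available near the singular frequency $\lambda=0$. Underlying both this estimate and the control of $\Delta_k$ is the uniform asymptotics of the Pochhammer coefficients $b_k(a)$ on bounded complex sets — including points where $\Gamma$ has poles, where $1/\Gamma$ vanishes — which is the computational core of the argument.
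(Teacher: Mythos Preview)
Your approach is correct and follows the same core strategy as the paper: both arguments rest on a Stirling-type asymptotic for the coefficients $b_k(D)$ of the power series $(1-z)^{-D}=(1-z)^{N-\Id}$, yielding the decomposition $b_k(D)=C\,(k+1)^{-N}+\Delta_k$ with $\norm{\Delta_k}_{\cL_b(\cH_0)}=\bigo{(k+1)^{-1-\varrho}}$, where $\varrho=\essinfarg{\xi}\Re(\mathrm{n})\geq1/2$. The paper isolates this in \Cref{lem:dev-1minusz-fiarma}, carrying out the expansion directly at the operator level (writing $\prod_{j=1}^k(\Id-N/j)$ via $\exp\ln$ and Euler--Maclaurin sums), and leaves $C$ as an explicit but opaque product; you instead diagonalise and invoke the scalar asymptotic $b_k(a)=(k+1)^{a-1}/\Gamma(a)+\bigo{(k+1)^{\Re(a)-2}}$ uniformly on compacta, which is shorter and has the pleasant side effect of identifying $C=\Gamma(D)^{-1}$ via functional calculus. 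Both routes yield the same $C$ and the same decay of $\Delta_k$.

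The paper's proof of \Cref{prop:longmemory-fiarma} itself is then one line: invoke \Cref{lem:dev-1minusz-fiarma} at $z=\rme^{-\rmi\lambda}$ and declare the result. The identification step you flag as the main obstacle --- that the $\mathsf{L}^2(\nu_\epsilon)$-limit of the partial transfer functions coincides with their pointwise limit on $\tore\setminus\{0\}$, so that the time-domain series $\sum_k b_k(D)\epsilon_{t-k}$ (resp.\ $\sum_k(k+1)^{-N}\epsilon_{t-k}$) really is $\filterprocess{\fracintoptransfer{D}}(\epsilon)$ (resp.\ $Y$) --- is taken for granted in the paper. Your treatment of it (pointwise convergence from Abel summation using $\sum_m b_m(-N)=0$, $L^2$-convergence from white-noise orthogonality and~(\ref{eq:ass-LRD-fiarma}), then matching via an a.e.\ subsequence of $S_K g_\epsilon^{1/2}$) is the right way to fill that gap and is a genuine addition over the paper's exposition.
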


\section{Prediction and estimation}
\label{sec:estimation}
\subsection{Main assumptions and preliminary result}
\label{sec:est-main-assumpt-main}
We denote by $\lebtore$ the Lebesgue measure on $\lr{\tore,\btore}$
divided by $2\pi$, so that for any locally integrable $2\pi$-periodic
function $g$,
$$
\int g\;\rmd \lebtore=(2\pi)^{-1}\int_{\tore} g(x)\;\rmd x=(2\pi)^{-1}\int_{-\pi}^{\pi} g(x)\;\rmd x\;.
$$
Let $\cH_0$ be a separable Hilbert space, $X=(X_t)_{t\in\zset}$ be a
process defined on $(\Omega,\cF,\PP)$, and valued in $\cH_0$, and
consider the following assumptions.
\begin{enumerate}[label=(\textbf{A-\arabic*}),series=est-assump]
\item\label{item:ergo-assump}   The process $X$ is
  stationary and ergodic.
\item \label{item:sp_density-assump} The process $X$ is weakly stationary.
\end{enumerate}
Under~\ref{item:sp_density-assump}, we always denote by $\nu_X$ the
spectral operator measure of $X$. Denote the discrete Fourier
coefficients of $X_1,\dots,X_n$ by
\begin{equation}
    \label{eq:def-dnX}
d_n^X(\lambda)=\frac1{\sqrt{n}}\sum_{k=1}^n X_k\,\rme^{-\rmi\lambda k}\;,\quad\lambda\in\tore\;,
\end{equation}
and the periodogram by
$$
I^X_n(\lambda)= d^X_n(\lambda)\otimes d^X_n(\lambda)\;,\quad\lambda\in\tore\;.
$$
If $X$ is not a centered process, one can use the empirical mean to
center it, that is, in~(\ref{eq:def-dnX}), replace $X_k$ by
$X^c_{n,k}$ as defined in~(\ref{eq:centering}), in which case we
denote the corresponding discrete Fourier coefficients and the corresponding
periodogram by $d_n^{X^c_n}$ and $I_n^{X^c_n}$, respectively.

The periodogram is related to the empirical covariance estimators
through the following identity, for all $s,t\in\zset$,
  \begin{equation}
    \label{eq:emp-cov-centered}
  \hat{\Gamma}_n(s-t)=\int
  I_n^{X^c_n}(\lambda)\,\rme^{\rmi\,(s-t)\,\lambda}\;\lebtore(\rmd\lambda)=\frac1n\sum_{\stackrel{1\leq k,k'\leq
      n}{k-k'=(s-t)}}
  X^c_{n,k}\otimes X^c_{n,k'}\;.    
  \end{equation}
The integral in this equation can be interpreted as a
sesquilinear functional $\cQ_{I_n^{X^c_n}}$ applied to exponential functions
$\lambda\mapsto\rme^{\rmi\,s\,\lambda}$ on the left and
$\lambda\mapsto\rme^{\rmi\,t\,\lambda}$ on the right, where, for any operator functions $L,g$ and $R$ defined on
$\tore$, we set 
\begin{align}\label{eq:cQ-def-Leb}
  \cQ_g(L,R)= \int  L\, g \,R^\adjoint\;\rmd\lebtore \;.
\end{align}
Similarly, if $\nu$ is a trace-class \povm\ defined on
$(\tore,\btore)$ and valued in
$\cS_1^+(\cH_0)$, we set 
\begin{align}\label{eq:cQ-def-general}
  \cQ_\nu(L,R)= \int  L\, \rmd\nu \,R^\adjoint\;. 
\end{align}
To ensure that these integrals are well defined, we assume that $L$
and $R$ are measurable bounded functions valued in
$\cL_b(\cH_0,\cG_0)$, with $\cG_0$ an additional separable Hilbert
space. Namely, for any Banach space $\lr{\cE,\norm{\cdot}_{\cE}}$, we
further denote, by $\mathbb{F}_b\lr{\tore,\btore,\cE}$ the set of
bounded measurable functions from $\lr{\tore,\btore}$ to
$\lr{\cE,\borel(\cE)}$, and we endow
$\mathbb{F}_b\lr{\tore,\btore,\cE}$ with the sup norm, which, for all
$L\in\mathbb{F}_b\lr{\tore,\btore,\cE}$, we denote by
$$
\sup\lr{L}=\sup_{\lambda\in\tore}\norm{L(\lambda)}_{\cE}\;.
$$
Then, for all $g$ valued in $\cS_1(\cH_0)$ and
$L,R\in\mathbb{F}_b\lr{\tore,\btore,\cL_b(\cH_0,\cG_0)}$, we have
$\cQ_g(L,R)\in\cS_1(\cG_0)$ with
\begin{equation}
  \label{eq:firsCQbound}
\norm{\cQ_g(L,R)}_1\leq \sup(L)\,\sup(R)\,\int \norm{g}_1 \rmd\lebtore
\;,  
\end{equation}
and similarly, for  a trace-class \povm\ $\nu$ valued in
$\cS_1^+(\cH_0)$, we have
$\cQ_\nu(L,R)\in\cS_1(\cG_0)$ with
\begin{equation}
  \label{eq:firsCQbound-nuX}
\norm{\cQ_\nu(L,R)}_1\leq \sup(L)\,\sup(R)\,\norm{\nu}_1(\tore)\;.
\end{equation}
We denote by $\mathbb{F}_{b,b}(\cH_0,\cG_0)$ the product vector space
$\mathbb{F}_b\lr{\tore,\btore,\cL_b(\cH_0,\cG_0)}\times\mathbb{F}_b\lr{\tore,\btore,\cL_b(\cH_0,\cG_0)}$,
endowed with the max norm defined by
$$
\norm{(L,R)}_{b,b}:=
\max(\sup(L),\sup(R))\quad\text{for all}\quad (L,R)\in\mathbb{F}_{b,b}(\cH_0,\cG_0)\;.
$$
For any two metric spaces $\lr{\cE_1,d_1}$ and $\lr{\cE_2,d_2}$,
$\cC\lr{\cE_1,\cE_2}$ denotes the space of continuous functions from
$\cE_1$ to $\cE_2$. If $F$ and $F_n$ are in $\cC\lr{\cE_1,\cE_2}$ for all $n\in\nset$ and
$$
\lim_{n\to\infty}\sup_{x\in\cE_1}d_2\lr{F_n(x),F(x)} = 0 \;,
$$
we say that $(F_n)_{n\in\nset}$ converges to $F$ uniformly in
$\cC\lr{\cE_1,\cE_2}$.

Using these definitions, immediate properties of the quadratic
functionals $\cQ_{I_n^{X^c_n}}$ and $\cQ_{\nu_X}$ are summarized in
the following proposition.
\begin{proposition}
  \label{prop:cQ-well-def} For any $X_1,\dots,X_n$ in $\cH_0$, the
  mapping $\cQ_{I_n^{X^c_n}}$ is well defined and belongs to
  $\cC(\mathbb{F}_{b,b}(\cH_0,\cG_0),\cS_1\lr{\cG_0})$. If~\ref{item:sp_density-assump}
  holds, we also have
  $\cQ_{\nu_X}\in\cC(\mathbb{F}_{b,b}(\cH_0,\cG_0),\cS_1\lr{\cG_0})$.
\end{proposition}
\begin{proof}
  Observe that, under the given assumptions,
  $X_{n,1}^c,\dots,X_{n,n}^c$ all are in $\cH_0$, and so is
  $d_n^{X^c_n}(\lambda)$. Moreover, $\norm{d_n^{X^c_n}(\lambda)}_{\cH_0}$ is bounded
  independently of $\lambda$. Consequently, $I_n^{X^c_n}$ is valued
  in $\cS^{1}(\cH_0)$ and its trace-norm is integrable over
  $\tore$. The result on $\cQ_{I_n^{X^c_n}}$ thus follows
  from~(\ref{eq:firsCQbound}). Under~\ref{item:sp_density-assump},
  $\nu_X$ is a trace-class \povm\ $\nu$ valued in $\cS_1^+(\cH_0)$,
  with $\norm{\nu}_1(\tore)=\PEarg{\norm{X_0}_{\cH_0}^2}<\infty$. The
  result on $\cQ_{\nu_X}$ thus follows
  from~(\ref{eq:firsCQbound-nuX}).
\end{proof}
Our next result only exploits~\ref{item:ergo-assump}
and~\ref{item:sp_density-assump}, and is thus of independent
interest. It is a uniform convergence result for integral quadratic
functionals based on the periodogram. It applies to a parameterized
pair of bounded operators functions defined on $(\tore,\btore)$ and
valued in $\cL_b(\cH_0,\cG_0)$. More precisely, for $L$ and $R$ in
$\cC\lr{\Theta\times\tore,\cL_b(\cH_0,\cG_0)}$, for all $n\geq1$, we
define
$\tilde{\cQ}_{I_n^{X^c_n}}^{(L,R)}:\theta\mapsto\cQ_{I_n^{X^c_n}}\lr{L(\theta,\cdot),R(\theta,\cdot)}$.
We also define
$\tilde{\cQ}_{\nu_X}^{(L,R)}:\theta\mapsto\cQ_{\nu_X}(L(\theta,\cdot),R(\theta,\cdot))$,
with $\cQ_{I_n^{X^c_n}}$ and $\cQ_{\nu_X}$ as in \Cref{prop:cQ-well-def}.

We can now state a first result on the convergence of the periodogram
quadratic functional, in the case where the left and right operator
functions' arrival spaces are finite-dimensional.
\begin{theorem}
\label{thm:periodo-func-fidi}
Let $\cH_0$ be a separable Hilbert space and $\cG_0$ be a finite
dimensional space. Let $X=(X_t)_{t\in\zset}$ be a
process defined on $(\Omega,\cF,\PP)$ and valued in $\cH_0$ satisfying~\ref{item:ergo-assump}
and~\ref{item:sp_density-assump} and let $(\Theta,\Delta)$ be
a compact metric space. Let $L$ and $R$ in
$\cC\lr{\Theta\times\tore,\cL_b(\cH_0,\cG_0)}$.  Then, we have
  \begin{equation}
    \label{eq:unif-periodo-mapping-convergence-fidi}
\lim_{n\to\infty}\tilde{\cQ}_{I_n^{X^c_n}}^{(L,R)}=\tilde{\cQ}_{\nu_X}^{(L,R)}\quad\text{uniformly
  in}\quad\cC\lr{\Theta,\cS_1\lr{\cG_0}}\;,\quad\as
\end{equation}
\end{theorem}
We now consider the case where $\cG_0=\cH_0$ and $\cH_0$ is an
infinite-dimensional separable Hilbert space. In order to obtain the
convergence in $\cS_1(\cH_0)$, we will rely on an 
additional assumption  in this case. To this end, for any sequence
$s=(s_k)_{k\in\nset}\in[1,\infty)^\nset$ and any orthonormal sequence
$\lr{\phi_k}_{k\in\nset}$ of $\cH_0$, we set
\begin{equation}
  \label{eq:defHOs}
\cH_0^s=\set{x\in\cspanarg[\cH_0]{\phi_k, k\in\nset}}{\sum_{k\in\nset}s_k^2\,\lrav{\pscal{x}{\phi_k}_{\cH_0}}^2<\infty}\;.
\end{equation}
A typical example of such spaces are the Sobolev spaces with index $\alpha>0$ where
$\lr{\phi_k}_{k\in\nset}$ is a well chosen Hilbert basis (i.e. orthonormal
  and complete in $\cH_0$)  and $s_k=(1+k)^{\alpha}$. 
The space $\cH_0^s$ is a subspace of $\cH_0$ and is itself a
separable Hilbert space endowed with the inner product
\begin{equation}
  \label{eq:pscalHOs}
\pscal{x}{y}_{\cH_0^s}=\sum_{k\in\nset}s_k^2\,\pscal{x}{\phi_k}_{\cH_0}
\overline{\pscal{y}{\phi_k}_{\cH_0}}\;.  
\end{equation}
Setting $\xi_k=s_k^{-1}\,\phi_k$ for all $k\in\nset$, we note that
$\lr{\xi_k}_{k\in\nset}$ is a Hilbert basis of $\cH_0^s$.

Using the space $\cH_0^s$ that we have just introduced, 
we have the following result for the infinite-dimensional case.
\begin{theorem}
\label{thm:periodo-func}
Let $\cH_0$ be an infinite-dimensional separable Hilbert space. Let
$X=(X_t)_{t\in\zset}$ be a process defined on $(\Omega,\cF,\PP)$ and
valued in $\cH_0$ satisfying~\ref{item:ergo-assump}
and~\ref{item:sp_density-assump} and let $(\Theta,\Delta)$ be a
compact metric space. Let $L$ and $R$ in
$\cC\lr{\Theta\times\tore,\cL_b(\cH_0)}$. Let  $s=(s_k)_{k\in\nset}\in[1,\infty)^\nset$
and  $(\phi_k)_{k\in\nset}$ be an orthonormal sequence of $\cH_0$. 
Define the Hilbert space  $\cH_0^s$ by~(\ref{eq:defHOs})
and~(\ref{eq:pscalHOs}). We suppose that the three following
assertions hold.
\begin{enumerate}[label=(\roman*)]
\item\label{item:s-sob-assump} The sequence $s=(s_k)_{k\in\nset}\in[1,\infty)^\nset$ is non-decreasing and
  goes to $\infty$.
  \item\label{item:X-sob-assump} We have $X_0\in\cH_0^s$
    $\as$ with $\PE\lrb{\norm{X_0}_{\cH_0^s}^2}<\infty$.
  \newcounter{primcount}
  \setcounter{primcount}{\value{enumi}} 
    \item\label{item:LR-sob-assump} Defining $L_s$ and $R_s$ by
$L_s(\theta,\lambda)=L(\theta,\lambda)_{|\cH_0^s}$ and
$R_s(\theta,\lambda)=R(\theta,\lambda)_{|\cH_0^s}$ for all
$(\theta,\lambda)\in\Theta\times\tore$,  we have $L_s$ and $R_s$
in $\cC\lr{\Theta\times\tore,\cL_b(\cH_0^s)}$.
\end{enumerate}
Then, the following convergence holds.
  \begin{equation}
    \label{eq:unif-periodo-mapping-convergence-infidi}
\lim_{n\to\infty}\tilde{\cQ}_{I_n^{X^c_n}}^{(L,R)}=\tilde{\cQ}_{\nu_X}^{(L,R)}\quad\text{uniformly
  in}\quad\cC\lr{\Theta,\cS_1\lr{\cH_0}}\;,\quad\as
\end{equation}
\end{theorem}
In fact, as shown by \Cref{lem:A1A2implyA4}
  in \Cref{sec:proof-estimation}, Assumptions~\ref{item:ergo-assump}
and~\ref{item:sp_density-assump} imply
Conditions~\ref{item:s-sob-assump} and~\ref{item:X-sob-assump} of
\Cref{thm:periodo-func} for a well chosen $s$. This fact is useful to
prove in \Cref{thm:1}, by applying \Cref{thm:periodo-func}  for a specific choice
of $L$ and $R$, for which \ref{item:LR-sob-assump} holds for any
sequence $s$, see the proof of  \Cref{thm:1} in
\Cref{sec:proof-thm:1}.

\subsection{FIARMA prediction and estimation}
\label{sec:examples-est-application}

A common tool for $M$-estimation for finite dimensional time series is
the Whittle contrast, which relies on a Gaussian approximation of
$d_n^X$ as $n\to\infty$, hence suggesting to use a Gaussian likelihood
contrast for $d_n^X$, based on its asymptotic covariance operator.  We
still have such an approximation for time series valued in a Hilbert
space, see \cite[Theorem~1]{CEROVECKI2017282}. However, using the
Whittle approach directly in infinite dimension does not seem to be
directly applicable. Indeed, Gaussian distributions are generally
singular to each other in infinite dimension. In particular, the 
log determinant term of the noise covariance matrix appearing in the
Whittle contrast (for example, see the first term in $\bar{L}_N$ on Page~344
of \cite{dunsmuir_hannan_1976}), is not well defined when this matrix
is replaced by an infinite dimensional covariance operator.

There are two possible ways of circumventing this issue. The first one
is to project the data on some finite-dimensional subspace for
statistical inference and then study the behavior of estimators as the
dimension of this subspace diverges. The second one is to work on a least square criterion, which does
not include the optimization of the noise covariance operator.  Here
we investigate this second approach.  We first derive the best
one-step ahead predictor of a FIARMA process (see
\Cref{thm:fiarma-predictor}). We then show that, under some condition,
such a predictor can be estimated from the data (see
\Cref{thm:fiarma-predictor-esimation}).

Recall that, for any integer $d\in\nset$, $\mathcal{P}_d(\cH_0)$
denotes the set of polynomials $\apol$ of degree $d$ with coefficients
in $\cL_b(\cH_0)$, and such that $\apol(0)=\Id_{\cH_0}$. In the
following, we further denote by $\mathcal{P}_d^{\dagger}(\cH_0)$, the
set of all $\apol\in\mathcal{P}_d(\cH_0)$, which are invertible on the
closed unit disk $\overline{\unitdisk}$.

\begin{theorem}
  \label{thm:fiarma-predictor}
  Let $\cH_0$ be a separable Hilbert space and $p,q$ be two
  non-negative integers.  Let $Y$ be an $\cH_0$-valued \fiarma\
  process, as in \Cref{def:fiarma-fiarma}, with long-memory operator
  $D\in\cN(\cH_0)$, MA polynomial
  $\mapol\in\mathcal{P}_q^\dagger(\cH_0)$, AR polynomial
  $\arpol\in\mathcal{P}_p^\dagger(\cH_0)$ and an $\cH_0$-valued centered
  white noise $Z$.   Define, for all $z\in\cset\setminus[1,\infty)$,
  \begin{equation}
    \label{eq:best-pred-fiarma-def}
    \fiarmapred\lr{z}=
      \Id_{\cH_0}-\lrcb{\mapol(z)}^{-1}\,\arpol(z)\,(1-z)^{D}\;.
    \end{equation}
  Then, $\lambda\mapsto\fiarmapred\lr{\rme^{\-\rmi\lambda}}$ belongs
  to $\hat{\cH}^Y$, and, for all $t\in\zset$, the best linear predictor of $Y_t$
  given its past $\set{Y_s}{s\leq t-1}$ is given by
  \begin{equation}
    \label{eq:best-pred-fiarma}
    \proj{Y_t}{\cH^Y_{t-1}} = \int_\tore \rme^{i\lambda t} \; \fiarmapred[\mapol,\arpol,D]\lr{\rme^{-\rmi\lambda}}\;\hat{Y}(\rmd\lambda)\;,
  \end{equation}
  where $\proj{Y_t}{\cH^Y_{t-1}}$ denotes the orthogonal projection of
  $Y_t$ onto the closed space
  $$
  \cH^Y_{t-1}=\cspanarg[\cM(\Omega,\cF, \cH_0, \PP)]{\aop\,Y_s\;,\,s=t-1,t-2,\dots,\;\aop\in\cL_b(\cH_0)}\;.
  $$
\end{theorem}
We now derive the best predictor among a collection of
FIARMA predictors from a finite sample $X_1,\dots,X_n$. We will
consider ARMA predictors or positive long memory FIARMA predictors. More precisely, Define
  \begin{equation}
    \label{eq:Dond-positive-long-memory}
    \cN^\dagger(\cH_0) :=\set{D\in\cN(\cH_0)}{\Re(D)\text{ is invertible}}\;,
\end{equation}
where $\Re(D)=(D+D^\adjoint)/2$ denote the real part of $D$. As a
subset of $\cL_b(\cH_0)$, we endow $\cN^\dagger(\cH_0)$ with the
topology inherited from the operator norm $\norm{\cdot}_\infty$. Our main
assumption on the FIARMA parameters is the following.
\begin{enumerate}[resume*=est-assump]
\item\label{item:fiarma-mode} The FIARMA parameters
  $\fiarmacol=\lr{D_{\theta},\arpol_\theta,\mapol_\theta}_{\theta\in\Theta}$
  are valued in
  $\cN(\cH_0)\times\mathcal{P}_p^\dagger(\cH_0)\times\mathcal{P}_q^\dagger(\cH_0)$
  for some $p,q\in\nset^2$ and indexed by a compact metric space
  $(\Theta,\Delta)$. Moreover, the mappings
  $(\theta,\lambda)\mapsto\arpol_\theta\lr{\rme^{-\rmi\lambda}}$ and
  $(\theta,\lambda)\mapsto\mapol_\theta\lr{\rme^{-\rmi\lambda}}$
  belong to
  $\cC\lr{\Theta\times\tore,\cL_b(\cH_0)}$ and one
  of the following two assertions hold.
  \begin{enumerate}[label=(\roman*)]
  \item\label{item:arma-pred} For all
  $\theta\in\Theta$, $D_\theta=0$.  
\item\label{item:long-memory-fiarma-pred} The mapping
  $\theta\mapsto D_\theta$ belongs to $\cC\lr{\Theta,\cN^\dagger(\cH_0)}$.
  \end{enumerate}
\end{enumerate}
In~\ref{item:fiarma-mode}, Condition~\ref{item:arma-pred} corresponds
to using an ARMA predictor. Therefore, we will call $\fiarmacol$ an ARMA
predictor model in this case. 
Condition~\ref{item:long-memory-fiarma-pred} corresponds to using a
FIARMA predictor with  \emph{positive} long-memory. Therefore, we will call $\fiarmacol$ a positive FIARMA
predictor model in this case.

Our goal is now to derive, based on a finite sample $X_1,\dots,X_n$,
an approximation of the best possible $\fiarmacol$-prediction of a
weakly stationary process $X$ taken among the ARMA or FIARMA
predictors defined by a collection $\fiarmacol$
satisfying~\ref{item:fiarma-mode}. We precise what we mean by this best
prediction in the following result, for a centered weakly stationary
process $Y$. We treat both the case where the model
is well specified and the case where it is not. Recall that we say
that the model $\fiarmacol$ of~\ref{item:fiarma-mode} is
well-specified for $Y$ when $Y$ is indeed a FIARMA process with a
FIARMA parameter $(\mapol,\arpol,D)$ among $\fiarmacol$.

\begin{proposition}[Definition of $\fiarmacol$-best prediction]\label{prop:firama-pred-def}
  Let $\cH_0$ be an infinite-dimensional separable Hilbert space and
  $p,q$ be two non-negative integers. Let $Y=(Y_t)_{t\in\zset}$ be a
  centered weakly stationary process defined on $(\Omega,\cF,\PP)$ and
  valued in $\cH_0$ and let
  $\fiarmacol$
  be a model satisfying~\ref{item:fiarma-mode}. Then, we have the following
  facts and definitions.
  \begin{enumerate}[label=(\roman*)]
  \item\label{item:prop:firama-pred-def1} For all
  $\theta\in\Theta$, there exists an absolutely summable $\cL_b(\cH_0)$-valued sequence
  $\lr{\fiarmapredcoef[\theta]{k}}_{k\geq1}$ such that, for all
  $\lambda\in\tore\setminus\{0\}$,
  \begin{equation}
    \label{eq:fiarmapred-def2}
    \fop^{\dagger}(\theta,\lambda) :=
    \Id_{\cH_0}-\sum_{k=1}^\infty
    \fiarmapredcoef[\theta]{k}\,\rme^{-\rmi\lambda\,k}
    =    \lrb{\mapol _{\theta} (\rme^{-\rmi\lambda})}^{-1}\,\arpol _{\theta} (\rme^{-\rmi\lambda})\,\lr{1-\rme^{-\rmi\lambda}}^{D _{\theta}} \;.
  \end{equation}
\item\label{item:prop:firama-pred-def2} For all $t\in\zset$ and $\theta\in\Theta$, we can define 
  \begin{equation}
    \label{eq:fiarmacol-pred}
  \hat{Y}_t(\theta):=\sum_{k=1}^\infty\fiarmapredcoef[\theta]{k}\,Y_{t-k}\in\cH^Y_{t-1}\;.    
  \end{equation}
\item\label{item:prop:firama-pred-def3}  The \emph{best $\fiarmacol$-prediction quadratic risk} of $Y$ is defined as
  \begin{equation}
    \label{eq:fiarmacol-pred-best-error}
  \mathbb{E}^2\lr{Y,\fiarmacol}=\inf_{\theta\in\Theta}\PEarg{\norm{Y_t-\hat{Y}_t(\theta)}_{\cH_0}^2}\;.
\end{equation}
which does not depend on $t$ by weak stationarity of $Y$.
\item\label{item:prop:firama-pred-def4} The inf in~(\ref{eq:fiarmacol-pred-best-error})  is attained in $\Theta$ (hence is a minimum) and we
call the argmin set   the \emph{set of best $\fiarmacol$-predictors} for
$Y$, denoted by
  \begin{equation}
    \label{eq:fiarmacol-pred-best-set}
    \Theta^\ast_Y:=\set{\theta\in\Theta}{\PEarg{\norm{Y_t-\hat{Y}_t(\theta)}_{\cH_0}^2}=  \mathbb{E}^2\lr{Y,\fiarmacol}}
  \end{equation}
  Then, $\Theta^\ast_Y$ is a compact subset of $\Theta$.
\item\label{item:prop:firama-pred-def5} 
If there
  exists $\hat{Y}^\ast_t\in\cH_{t-1}^Y$ such that the subset
  $\set{\hat{Y}_t(\theta)}{\theta\in\Theta^\ast_Y}$ of $\cH_{t-1}^Y$
  reduces to the singleton $\hat{Y}^\ast_t$, we call $\hat{Y}^\ast_t$
  the \emph{best $\fiarmacol$-predictor} of $Y_t$. Otherwise we say that the
  best $\fiarmacol$-predictor of $Y_t$ is \emph{not well defined}.
\item\label{item:prop:firama-pred-def6} When the best  $\fiarmacol$-predictor of $Y_t$ is well defined for one
  $t$ it is well defined for all $t$. Moreover, in this case, there
  exists a set of probability one on which, for all $t\in\zset$ and
  $\theta\in\Theta^\ast_Y$, $\hat{Y}^\ast_t=\hat{Y}_t(\theta)$.
\item\label{item:prop:firama-pred-def7} 
  In
  the well-specified case,  the
  best $\fiarmacol$-predictor  $\hat{Y}^\ast_t$ is always well defined
  and coincides with the best
  predictor in  $\cH_{t-1}^Y$, that is,
  \begin{align}
    \label{eq:fiarmacol-pred-best-error-well-specified}
&    \mathbb{E}^2\lr{Y,\fiarmacol}=\inf_{V\in\cH^Y_{t-1}}\PEarg{\norm{Y_t-V}_{\cH_0}^2}\;,\\
      \label{eq:fiarmacol-pred-best-error-well-specified2}
& \hat{Y}_t^\ast= \proj{Y_t}{\cH^Y_{t-1}}\;.
  \end{align}
  \end{enumerate}
  \end{proposition}
The next results shows how to estimate a predictor which converges to
the best predictor that we have just introduced. We now introduce our
estimation procedure.

Using $\fop^{\dagger}_{\theta}$ defined
by~(\ref{eq:fiarmapred-def2}) and the periodogram  $I_n^{X^c_n}$
defined in \Cref{sec:est-main-assumpt-main}, we consider a sequence of
estimators $(\hat\theta_n)_{n\in\nset}$ satisfying
\begin{align}
  \label{eq:theta-hat-def}
\limsup_{n\to\infty}  \lr{\Lambda_n(\hat{\theta}_n) - \inf_{\theta\in\Theta}
    \Lambda_n(\theta)} = 0\;,
\end{align}
where, for all $n\in\nset$ and $\theta\in\Theta$,
\begin{align}
\label{eq:theta-hat-contrast-def}
\Lambda_n(\theta) :=\tr\lr{\tilde{Q}_{I_n^{X^c_n}}^{(\fop^\dagger,
    \fop^\dagger)}(\theta)} = 
    \tr\lr{\int \fop^{\dagger}(\theta,\lambda)\,I_n^{X^c_n}(\lambda)\,\lr{\fop^{\dagger}(\theta,\lambda)}^\adjoint\;\rmd\lebtore}\;.
\end{align}
Let $Y$ be the centered version of $X$, $Y=X-\PEarg{X_0}$.  Using that
$I_n^{X^c_n}$ approximates $\nu_X=\nu_Y$,
with~(\ref{eq:fiarmapred-def2}) and~(\ref{eq:fiarmacol-pred}),
$\Lambda_n(\theta)$ can be seen as an approximation of
$\PEarg{\norm{Y_t-\hat{Y}_t(\theta)}_{\cH_0}^2}$, and $\hat\theta_n$
as an attempt to minimize this risk in $\theta$, mimicking what is
done in~(\ref{eq:fiarmacol-pred-best-error}).  Then, to take onto
account the unknown mean of $X$ and since we can only use the
observations $X_1,\dots,X_n$ to predict $X_{n+1}$, we truncate the
series defining the predictor in~(\ref{eq:fiarmacol-pred}) to keep its
$n$ first terms only, apply it to the empirically centered
observations $\lr{X^c_{n,n+1-k}}_{1\leq k\leq n}$ and add the
empirical mean to approximate $\PEarg{X_0}$. This lead us to define
the predictor of $X_{n+1}$ from the sample $X_1,\dots,X_n$ associated
to the estimator $\hat{\theta}_n$ by
\begin{equation}
  \label{eq:our-priedictor-from-sample}
\hat{X}_{n+1,n} = \frac1n\sum_{k=1}^nX_k
+\sum_{k=1}^n\fiarmapredcoef[\hat{\theta}_n]{k}\;X^c_{n,n+1-k} \;,
\end{equation}
where $\fiarmapredcoef[\theta]{k}$ is defined in
\Cref{prop:firama-pred-def}. Note that the predictor $\hat{X}_{n+1,n}$
can be written as
\begin{equation}
  \label{eq:our-priedictor-from-sample-general-form}
m+\sum_{k=1}^n\fiarmapredcoef[\theta]{k}\;\lr{X_{n+1-k}-m}
\end{equation}
by taking $m\in\cH_0$ and $\theta\in\Theta$ equal to
$\frac1n\sum_{k=1}^nX_k$ and $\hat\theta_n$, respectively.
In the following theorem, defining, for all $n\geq1$, $m\in\cH_0$ and
$\theta\in\Theta$, the quadratic prediction risk of a predictor of
this form by
\begin{equation}
  \label{eq:pred-finite-obs-with-emp-mean}
\mathbb{E}^{2}_{X,n}\lr{m,\theta}= \PEarg{\norm{X_{n+1}-\lr{m+\sum_{k=1}^n\fiarmapredcoef[\theta]{k}\;\lr{X_{n+1-k}-m}}}_{\cH_0}^2}  \;,
\end{equation}
we show that, as a predictor of $X_{n+1}$, $\hat{X}_{n+1,n}$
asymptotically achieves the same 
prediction risk as the optimal risk for predicting the centered
process $Y=X-\PEarg{X_0}$ from its past.
\begin{theorem}
  \label{thm:fiarma-predictor-esimation}
  Let $\cH_0$ be an infinite-dimensional separable Hilbert space and
  $p,q$ be two non-negative integers. Let $X=(X_t)_{t\in\zset}$ be a
  process defined on $(\Omega,\cF,\PP)$ and valued in $\cH_0$
  satisfying~\ref{item:ergo-assump} and~\ref{item:sp_density-assump}.
  Let $\fiarmacol$ be a model satisfying~\ref{item:fiarma-mode} with
  compact parameter metric space $\lr{\Theta,\Delta}$. Let
  $s=(s_k)_{k\in\nset}\in[1,\infty)^\nset$ and $(\phi_k)_{k\in\nset}$
  be an orthonormal sequence of $\cH_0$.  Define the Hilbert space
  $\cH_0^s$ by~(\ref{eq:defHOs}) and~(\ref{eq:pscalHOs}). We suppose
  that~\ref{item:s-sob-assump} and~\ref{item:X-sob-assump} of
  \Cref{thm:periodo-func} hold as well as the following condition. 
\begin{enumerate}[label=(\roman*)]
  \newcounter{backcount}
  \setcounter{backcount}{\value{enumi}}
  \setcounter{enumi}{\value{primcount}}
\item\label{item:LR-sob-assump-est} Defining, for all $z\in\cset$,
$\arpol_{\theta,s}(z)=\arpol_{\theta}(z)_{|\cH_0^s}$ and
  $\mapol_{\theta,s}(z)=\mapol_{\theta}(z)_{|\cH_0^s}$, we have that
  $(\theta,\lambda)\mapsto\arpol_{\theta,s}\lr{\rme^{-\rmi\lambda}}$
  and
  $(\theta,\lambda)\mapsto\mapol_{\theta,s}\lr{\rme^{-\rmi\lambda}}$
  belong to $\cD\lr{\Theta\times\tore,\cL_b(\cH_0^s)}$. Under~\ref{item:fiarma-mode}\ref{item:long-memory-fiarma-pred},
  defining $D_{\theta,s}=D_{\theta|\cH_0^s}$, assume, in addition,
  that
  $\theta\mapsto D_{\theta,s}$ belongs to $\cC\lr{\Theta,\cN^\dagger(\cH_0^s)}$.
  \setcounter{enumi}{\thebackcount}
\end{enumerate}
Finally, let $(\hat\theta_n)_{n\in\nset}$ be a sequence of estimators
satisfying~(\ref{eq:theta-hat-def}). Then, we have
\begin{align}
  \label{eq:consitency-not-well-specified}
  \lim_{n\to\infty}\Delta\lr{\hat\theta_n, \Theta^*_Y}
  =0\;,\quad\as\;,\\
  \label{eq:best-fiarmapred-not-well-specified-error}
\lim_{n\to\infty}\mathbb{E}^{2}_{X,n}\lr{\frac1n\sum_{k=1}^nX_k,\hat\theta_n}  =   \mathbb{E}^2\lr{Y,\fiarmacol}\;,\quad\as\;.
\end{align}
where $Y=(Y_t)_{t\in\zset}$ denotes the centered process defined by
$Y_t=X_t-\PEarg{X_t}$, $\Theta^*_Y$ is defined
in~(\ref{eq:fiarmacol-pred-best-set}),
$\mathbb{E}^{2}_{X,n}$
in~(\ref{eq:pred-finite-obs-with-emp-mean}), and
$\mathbb{E}^2\lr{Y,\fiarmacol}$
in~(\ref{eq:fiarmacol-pred-best-error}).

Moreover, if $\hat{Y}^\ast_t$ is well defined (as in
\Cref{prop:firama-pred-def}~\ref{item:prop:firama-pred-def5}), we further have
\begin{align}
  \label{eq:best-fiarmapred-not-well-specified-error-well-spec}
\limsup_{n\to\infty}\PEarg{\norm{X_{n+1}-\hat{X}_{n+1,n}}_{\cH_0}^2}
  \leq   \mathbb{E}^2\lr{Y,\fiarmacol}\;,
\end{align}
where is  $\hat{X}_{n+1,n}$
defined by~(\ref{eq:our-priedictor-from-sample}).
\end{theorem}
Let us briefly comment the conclusions
of \Cref{thm:fiarma-predictor-esimation}. Equation~(\ref{eq:consitency-not-well-specified})
says that $\hat\theta_n$ is consistent for
estimating the optimal $\theta$ up to the equivalence relationship
$\theta\sim\theta'$ defined by
$\PEarg{\norm{Y_t-\hat{Y}_t(\theta)}_{\cH_0}^2}=\PEarg{\norm{Y_t-\hat{Y}_t(\theta')}_{\cH_0}^2}$.
Equation~(\ref{eq:best-fiarmapred-not-well-specified-error}) says that
the rsik of an estimator of the
form~(\ref{eq:our-priedictor-from-sample-general-form}) for predicting
$X_{n+1}$ is asymptotically minimal with $m$ and $\theta$ replaced by
the empirical mean and $\hat\theta_n$. Finally ,
while~(\ref{eq:consitency-not-well-specified}) and
(\ref{eq:best-fiarmapred-not-well-specified-error}) hold in the $\as$
sense,
Equation~(\ref{eq:best-fiarmapred-not-well-specified-error-well-spec})
says that the prediction risk directly defined with the predictor 
$\hat{X}_{n+1,n}$, that is, in contrast
to~(\ref{eq:best-fiarmapred-not-well-specified-error}), with the
empirical mean and $\hat\theta_n$ inside the expectation, is indeed
asymptotically optimal.
 
\section{Postponed proofs}\label{sec:proofs-fiarma}
\subsection{Proofs of \Cref{sec:hilbert-valued-fiarma}}
\subsubsection{Proofs of  \Cref{prop:existence-joint-kernelope-fiarma}
  and \Cref{thm:cns-fi-operator-functional-fiarma}}
\Cref{prop:existence-joint-kernelope-fiarma} is used to introduce the
definition of joint kernels as in \Cref{def:joint-kernels}.
\begin{proof}[\bf Proof of \Cref{prop:existence-joint-kernelope-fiarma}]
  Let $(\phi_i)_{0\leq i<N}$ denote a Hilbert basis of
  $L^2(\Vset,\Vsigma,\xi)$, assumed to be of dimension $N\in\{1,2,\dots,\infty\}$.
  Define $\kernelope{K}_n : (v,v';\lambda) \mapsto \sum_{0\leq i,j\leq n}\phi_i^\adjoint
  K(\lambda)\phi_{j}\;\phi_i(v)\bar\phi_{j}(v')$ on  $\Vset^2 \times
  \tore$ and, for all $\epsilon>0$, $N_\epsilon : \lambda \mapsto
  \inf\set{n<N}{\sum_{i\text{ or }j> n}\left|\phi_i^\adjoint
      K(\lambda)\phi_{j}\right|^2\leq\epsilon}$ on $\tore$. Note that,
  for all $\lambda \in\tore$, $N_\epsilon(\lambda)$ is well defined
  and finite since $\sum_{0\leq i,j<N}\left|\phi_i^\adjoint K(\lambda)\phi_{j}\right|^2=\norm{K(\lambda)}_2<\infty$. Now let us define,  for all $v,v'\in\Vset$ and $\lambda\in\tore$, 
$
    \kernelope{K}(v,v';\lambda):=\lim_{n\to\infty}   \kernelope{K}_{N_{2^{-n}}(\lambda)}(v,v';\lambda)
 $
whenever this limit exists in $\cset$ and set
$\kernelope{K}(v,v';\lambda)=0$ otherwise. 
Since
$(\phi_k\otimes\bar\phi_{k'})_{0\leq k,k'<N}$ is a Hilbert basis of
$L^2(\Vset^2, \Vsigma^{\otimes 2}, \xi^{\otimes 2})$, we immediately
have that, for any $\lambda\in\Lambda$,
$\kernelope{K}_{N_{2^{-n}}(\lambda)}(\cdot,\cdot;\lambda)$ converges in the
sense of this $L^2$ space to $\sum_{0\leq i,j<N}\phi_i^\adjoint
    K(\lambda)\phi_{j}\;\phi_i\otimes\bar\phi_{j}$, and thus, this limit
    must be equal to 
$\kernelope{K}(\cdot,\cdot;\lambda)$, $\xi^{\otimes2}$-$\mae$ It follows that, that for any
$\lambda\in\Lambda$, for all $i,j\in\nset$,
$
\int \kernelope{K}(v,v';\lambda)\bar\phi_i(v)\phi_{j}(v')\;\xi(\rmd v)\xi(\rmd
v')= \phi_i^\adjoint K(\lambda)\phi_{j}\;,
$
which gives that $K(\lambda)$ is an integral operator associated to
the kernel $\kernelope{K}(\cdot,\cdot;\lambda)$. Since
$(v,v',\lambda)\mapsto\kernelope{K}(v,v';\lambda)$ is measurable by
definition, this concludes the proof of the existence of the $\Lambda$-joint kernel of $K$. If, moreover, $K\in\cL^2(\Lambda,\calA,\cS_2(\cH_0),\mu)$, then $\kernelope{K}_n$ converges in $L^2(\Vset^2\times\Lambda,\Vsigma^{\otimes2}\otimes\calA,\xi^{\otimes2}\otimes\mu)$ and the limit must be equal to $\kernelope{K}$,
$\xi^{\otimes2}\otimes\mu\text{-}\mae$ since for each $\lambda\in\Lambda$,
$\kernelope{K}_n(\cdot,\cdot;\lambda)$ converges to
$\kernelope{K}(\cdot,\cdot;\lambda)$ in
$L^2(\Vset^2,\Vsigma^{\otimes2},\xi^{\otimes2})$. Hence, we get that $\kernelope{K} \in L^2(\Vset^2\times\Lambda,\Vsigma^{\otimes2}\otimes\calA,\xi^{\otimes2}\otimes\mu)$. 
\end{proof}
We now prove  \Cref{thm:cns-fi-operator-functional-fiarma}. 
\begin{proof}[\bf Proof of
  \Cref{thm:cns-fi-operator-functional-fiarma}]
  We assume without loss of generality that $\mu(\{0\})=0$ (since it
  affects none of the given assertions).
  By \Cref{prop:carac-processtransfer-discrete}, Assertion~\ref{item:cns-fi-operator-functional1-fiarma} is equivalent to
   \begin{align}\label{eq:X-in-S-fiarma}
     \int_{\tore} \norm{(1 - \rme^{-\rmi \lambda})^{-D} g_X(\lambda) \left[(1 - \rme^{-\rmi \lambda})^{-D}\right]^\adjoint}_1 \, \mu(\rmd \lambda) < \infty \; .
   \end{align}
   Using the singular values decomposition \eqref{eq:SVD-normal-op-fiarma}, and since $U$ is unitary from $\cH_0$ to $L^2(\Vset,\Vsigma,\xi)$, we get that, for all $\lambda\in\tore\setminus\{0\}$,  $\norm{(1 - \rme^{-\rmi \lambda})^{-D} g_X(\lambda) \left[(1 - \rme^{-\rmi \lambda})^{-D}\right]^\adjoint}_1 =   \norm{U^\adjoint M_{(1 - \rme^{-\rmi \lambda})^{-d}} U g_X(\lambda) \U^\adjoint M_{(1 - \rme^{-\rmi \lambda})^{-d}}^\adjoint U}_1  = \norm{M_{(1 - \rme^{-\rmi \lambda})^{-\mathrm{d}}} h(\lambda)}_2^2$.
Hence~(\ref{eq:X-in-S-fiarma}) holds if and only if 
$\displaystyle\int_{\tore} \norm{M_{(1 - \rme^{-\rmi \lambda})^{-\mathrm{d}}} h(\lambda)}_2^2 \, \mu(\rmd \lambda) < \infty$, which, using the $\tore$-joint kernel $\kernelope{h}$ of $h$, reads
$\displaystyle
\int \left|(1 -
\rme^{-\rmi \lambda})^{-\mathrm{d}(v)} \kernelope{h}(v,v';\lambda)\right|^2\;\xi(\rmd
v)\xi(\rmd v') \, \mu(\rmd \lambda) < \infty
$.
Applying~\Cref{lem:integral-1minusexp-fiarma} to $z =-\mathrm{d}(v)$, since $\mathrm{d}$ is a $\mu$-essentially
bounded function, we get that
Assertion~\ref{item:cns-fi-operator-functional1-fiarma} is equivalent to
\begin{equation}
  \label{eq:int-xiximu-small-freq}
\int_{\Vset^2\times(-\pi,\pi]}
|\lambda|^{-2\Re(\mathrm{d}(v))} \left|\kernelope{h}(v,v';\lambda)\right|^2\;\xi(\rmd
v)\xi(\rmd v')\mu(\rmd\lambda)<\infty\;.  
\end{equation}
Using that $|\lambda|^{2\Re_{-}(\mathrm{d}(v))}$ is
  bounded over $\lambda\in(-\pi,\pi]$ and that
$$
\int \left|\kernelope{h}(v,v';\lambda)\right|^2\;\xi(\rmd v)\xi(\rmd
v')\mu(\rmd\lambda)=\int\norm{h(\lambda)}_2^2\;\mu(\rmd\lambda)<\infty\;,
$$
Condition~(\ref{eq:int-xiximu-small-freq}) is equivalent to
Assertion~\ref{item:cns-fi-operator-functional3-fiarma}. Using that,
for any $\eta\in(0,\pi)$, $|\lambda|^{-2\Re(\mathrm{d}(v))}$ is bounded
independently of $v$ on $\lambda\in(-\pi,\pi]\setminus(-\eta,\eta)$,
we also get that Condition~(\ref{eq:int-xiximu-small-freq}) is equivalent to
Assertion~\ref{item:cns-fi-operator-functional2-fiarma}. 
\end{proof}
\subsubsection{Proof of \Cref{cor:cns-arma-fi-operator-functional-fiarma}}
We start with a useful result on ARMA processes. 
\begin{lemma}\label{prop:arma-processes-kernel-fiarma}
  Let $\cH_0$ be a separable Hilbert space and $X$ be an ARMA($p,q$) process
  defined by
  $\hat
  X(\rmd\lambda)=[\arpol(\rme^{-\rmi\lambda})]^{-1}\mapol(\rme^{-\rmi\lambda})\hat
  Z(\rmd\lambda)$ with $\mapol\in\mathcal{P}_q(\cH_0)$,
  $\arpol\in\mathcal{P}_p^\ast(\cH_0)$ and $Z$ an $\cH_0$-valued white noise with
  covariance operator $\Sigma$. 
  Then, there exists $\eta\in(0,\pi)$
  such that
\begin{equation}
  \label{eq:kernel_arma_around_zero-new-fiarma-geom-decreasing-k_n}
  \sum_{n\in\nset}\frac{\eta^n}{n!}\,\norm{\aop_n\lr{\arpol,\mapol}}_{\infty}<\infty\;,
\end{equation}
where $\aop_n\lr{\arpol,\mapol}$ is defined in~(\ref{eq:kernel_arma_around_zero-new-fiarma-defk_n}).
Moreover, for $\leb\text{-}\mae$ $\lambda\in(-\eta,\eta)$, we have
  \begin{equation}
    \label{eq:kernel_arma_around_zero-new-fiarma}
    g_{X}(\lambda)=h(\lambda)\,[h(\lambda)]^\adjoint\quad\text{with}\quad
    h(\lambda)=\sum_{n\in\nset}\frac{(-\rmi\lambda)^n}{n!}\,\aop_n\lr{\arpol,\mapol}\,\Sigma^{1/2}\;,
  \end{equation}
where $h$ can be seen as a power series valued in $\cS_2(\cH_0)$ with a
convergence radius at least equal to $\eta$. 
\end{lemma}
\begin{proof}
  Since $z\mapsto[\arpol(z)]^{-1}\,\mapol(z)$ is
  holomorphic in an open ring containing $\unitcircle$ and the
  exponential function is holomorphic on $\cset$, by
  \cite[Theorem~1.8.5]{gohberg-leiterer09}, there exists $\eta>0$ such
  that~(\ref{eq:kernel_arma_around_zero-new-fiarma-geom-decreasing-k_n})
  holds and 
  $[\arpol(\rme^{z})]^{-1}\,\mapol(\rme^z)$ coincides with the $\cL_b(\cH_0)$-valued power
  series $\sum_{n=0}^\infty\aop_n\lr{\arpol,\mapol} z^n/n!$ on the set
  $\set{z\in\cset}{|z|\leq\eta}$. 
  
  Finally, we observe that $g_X=hh^H$ with
  $h(\lambda)=[\arpol(\rme^{-\rmi\lambda})]^{-1}\,\mapol(\rme^{-\rmi\lambda})\,\Sigma^{1/2}$
  and the given expression of $h$
  in~(\ref{eq:kernel_arma_around_zero-new-fiarma}) follows
  from~(\ref{eq:kernel_arma_around_zero-new-fiarma-geom-decreasing-k_n})
  and the usual bound 
$$
\norm{\aop_n\lr{\arpol,\mapol}\,\Sigma^{1/2}}_2\leq\norm{\aop_n\lr{\arpol,\mapol}}_{\infty}\,\norm{\Sigma^{1/2}}_2=\norm{\aop_n\lr{\arpol,\mapol}}_{\infty}\,\norm{\Sigma}_1^{1/2}\;.
$$
This concludes the proof. 
\end{proof}
\begin{proof}[\bf Proof of  \Cref{cor:cns-arma-fi-operator-functional-fiarma}]
  Before proving the claimed implications, we start with some
  preliminary facts that follow from
  \Cref{prop:arma-processes-kernel-fiarma},
  \Cref{lem:kernel-cov-fiarma} and
  \Cref{thm:cns-fi-operator-functional-fiarma}.  First observe that
  the process $UX=(UX_t)_{t\in\zset}$ is the $\cG_0$-valued
  ARMA($p,q$) process defined by
  $\widehat
  {UX}(\rmd\lambda)=[\tilde{\arpol}(\rme^{-\rmi\lambda})]^{-1}\,\tilde\mapol(\rme^{-\rmi\lambda})\,\widehat{UZ}(\rmd\lambda)$,
  where $\tilde{\mapol}:=U\mapol U^{\adjoint}\in\mathcal{P}_q(\cG_0)$
  and
  $\tilde{\arpol}:=U\arpol U^{\adjoint}\in\mathcal{P}_p^\ast(\cG_0)$,
  and $UZ=(UZ_t)_{t\in\zset}$ is a $\cG_0$-valued white noise.  Then,
  applying \Cref{prop:arma-processes-kernel-fiarma} with $\mu$ as the
  Lebesgue measure, we get that, for some $\eta>0$, $\nu_{UX}$ has
  density $h(\lambda)[h(\lambda)]^\adjoint$ on $(-\eta,\eta)$ with $h$
  a power series valued in $\cS_2(\cG_0)$ with radius of convergence
  at least $\eta>0$,
  \begin{equation}\label{eq:h-proof-fiarma-cns--fiarma}
    h(\lambda)=\sum_{n\in\nset}\frac{(-\rmi\lambda)^n}{n!}\,U\,\aop_n\lr{\arpol,\mapol}\,\Sigma^{1/2}\,U^{\adjoint}\;.
  \end{equation}
  Now, define, for any $\eta'\in(0,\eta)$,
$$
I(\eta'):=\int_{\Vset^2\times(-\eta',\eta')}
|\lambda|^{-2\Re(\mathrm{d}(v))} \abs{\kernelope{h}(v,v';\lambda)}^2\;\xi(\rmd
v)\xi(\rmd v')\frac{\rmd\lambda}{2\pi} \;,
$$
where $\kernelope{h}$ is the $\tore$-joint kernel of $h$
in~(\ref{eq:h-proof-fiarma-cns--fiarma}). By
\Cref{thm:cns-fi-operator-functional-fiarma},
Assertion~\ref{ass:cns-arma-fi-operator-functional-cond0-fiarma} holds
if,
and only if, there exists $\eta'\in(0,\eta)$ such that
$I(\eta')<\infty$ which is itself equivalent to having
$I(\eta')<\infty$ for all $\eta'\in(0,\eta)$. Using
\eqref{eq:h-proof-fiarma-cns--fiarma}, we have
\begin{equation}
  \label{eq:Ieta-priem-def}
I(\eta')
=\int_{\Vset^2\times(-\eta',\eta')}
|\lambda|^{-2\Re(\mathrm{d}(v))} \abs{\sum_{n\in\nset}\frac{(-\rmi\lambda)^n}{n!}\,\kernelope{k}_n(v,v')}^2\;\xi(\rmd
v)\xi(\rmd v')\frac{\rmd\lambda}{2\pi}\;,  
\end{equation}
where $\kernelope{k}_n$ denotes the kernel of
$U\,\aop_n\lr{\arpol,\mapol}\,\Sigma^{1/2}\,U^{\adjoint}\in\cS_2(\cG_0)$. In
particular we have by \Cref{lem:kernel-cov-fiarma} that
\begin{equation}
  \label{eq:sigma_n-k_n-relation}
\sigma_n(v)= \varsqrt{\PEarg{\abs{W_n(v,\cdot)}^2}}=\norm{\kernelope{k}_n(v,\cdot)}_{\cG_0}\;.
\end{equation}
Denote
\begin{align}
  \label{eq:I_n-eta-priem-def}
I_n(\eta')&:=\int_{\Vset^2\times(-\eta',\eta')}
|\lambda|^{2n-2\Re(\mathrm{d}(v))}\,\abs{\kernelope{k}_n(v,v')}^2\;\xi(\rmd
v)\xi(\rmd v')\frac{\rmd\lambda}{2\pi}\;,\\
  \nonumber
  \overline{\rmd}&=\sup(\Re(\rmd))\quad\text{and}\quad \overline{m}:=\inf\set{m\in\nset}{m>\overline{\rmd}-1/2}\;.
\end{align}
Note that $\overline{\rmd}$ and $\overline{m}$ are finite since $\rmd$ is bounded. 
Defining, moreover,
\begin{align*}
&  \overline{I}(\eta'):=  \int_{\Vset^2\times(-\eta',\eta')}
|\lambda|^{-2\Re(\mathrm{d}(v))} \abs{\sum_{0\leq
                 n<\overline{m}}\frac{(-\rmi\lambda)^n}{n!}\,\kernelope{k}_n(v,v')}^2\;\xi(\rmd
v)\xi(\rmd v')\frac{\rmd\lambda}{2\pi}\\
&\text{and}\quad            R(\eta'):=
            \sum_{n=\overline{m}}^\infty\frac{I_n^{1/2}(\eta')}{n!}\;,
\end{align*}
and applying the Minkowski
inequality in~(\ref{eq:Ieta-priem-def}), for any $\eta'\in(0,\eta)$,
we have that, if  $R(\eta')<\infty$,
$$
I(\eta')<\infty\Leftrightarrow   \overline{I}(\eta')<\infty \;.
$$
Let us pick
$\eta'\in(0,1\wedge\eta)$.
Then, for all
$\lambda\in(-\eta',\eta')$ and $n\in\nset$, we have
$|\lambda|^{2n-2\Re(\mathrm{d}(v))}\leq|\lambda|^{2n-2\overline{\rmd}}$
and thus, for all $n\geq\overline{m}$,
\begin{align*}
I_n(\eta')&\leq \frac{\eta^{\prime(1+2n-2\overline{\rmd})}}{\pi(1+2n-2\overline{\rmd})}
\int_{\Vset^2}\abs{\kernelope{k}_n(v,v')}^2\;\xi(\rmd
  v)\xi(\rmd v')\\
  &=
    \frac{\eta^{\prime(1+2n-2\overline{\rmd})}}{\pi(1+2n-2\overline{\rmd})}
    \norm{\aop_n\lr{\arpol,\mapol}\,\Sigma^{1/2}}_{2}^2\;.
\end{align*}
Using that
$\norm{\aop_n\lr{\arpol,\mapol}\,\Sigma^{1/2}}_{2}\leq\norm{\aop_n\lr{\arpol,\mapol}}_{\infty}\norm{\Sigma}_{1}^{1/2}$
and the bound~(\ref{eq:kernel_arma_around_zero-new-fiarma-geom-decreasing-k_n})
of \Cref{prop:arma-processes-kernel-fiarma}, we get that $R(\eta')<\infty$.
We thus conclude that
Assertion~\ref{ass:cns-arma-fi-operator-functional-cond0-fiarma} is
equivalent to have, for some $\eta'\in(0,1\wedge\eta)$,
\begin{equation}
  \label{eq:Ieta-priem-cond-equiv}
\overline{I}(\eta') < \infty \;.
\end{equation}
Next, we show that, for any $\eta'\in(0,1)$,
Condition~\ref{ass:cns-arma-fi-operator-functional-cond1-fiarma} is, in fact, equivalent to have
\begin{equation}
  \label{eq:cns-arma-fi-operator-functional-cond1-fiarma-equiv}
I_n(\eta')<\infty\quad\text{for all}\quad n\in\nset  \;.
\end{equation}
Indeed, integrating w.r.t. $v'$ and $\lambda$ in the definition of
$I_n(\eta')$ in~(\ref{eq:I_n-eta-priem-def}), we have that, for all $n\in\nset$,
$$
I_n(\eta')
=\int_{\lrcb{\Re(\mathrm{d})<n+1/2}}
\frac{\eta^{\prime\,(1+2n-2\Re(\mathrm{d}(v)))}\,\sigma_n^2(v)}{1+2n-2\Re(\mathrm{d}(v))}\;\frac{\xi(\rmd
v)}{\pi}\;,
$$
if $\Re(\mathrm{d})<n+1/2$ $\xi$-a.e. on $\lrcb{\sigma_n>0}$, or
is equal to $\infty$ otherwise. Since $\mathrm{d}$ is bounded on
$\Vset$, so is $\eta^{\prime\,(1+2n-2\Re(\mathrm{d}(v)))}$ on
$v\in\Vset$ and we conclude that
Condition~\ref{ass:cns-arma-fi-operator-functional-cond1-fiarma} is equivalent to~(\ref{eq:cns-arma-fi-operator-functional-cond1-fiarma-equiv}).

We are now ready to prove each implication of the claimed equivalence
successively.

\noindent\textbf{Proof of \ref{ass:cns-arma-fi-operator-functional-cond0-fiarma}$\Leftarrow$\ref{ass:cns-arma-fi-operator-functional-cond1-fiarma}.}
This is now trivial, since applying the Minkowski inequality
in the integral defining $\overline{I}$
in~(\ref{eq:Ieta-priem-cond-equiv}) and the definition of $I_n$
in~(\ref{eq:I_n-eta-priem-def}), we immediately see that
Condition~(\ref{eq:Ieta-priem-cond-equiv}) is implied
by~(\ref{eq:cns-arma-fi-operator-functional-cond1-fiarma-equiv}).

\noindent\textbf{Proof of \ref{ass:cns-arma-fi-operator-functional-cond0-fiarma}$\Rightarrow$\ref{ass:cns-arma-fi-operator-functional-cond1-fiarma}.}
  The proof of this implication is a bit more complex. The first step is
  to prove that
  Assertion~\ref{ass:cns-arma-fi-operator-functional-cond0-fiarma}
  implies
  \begin{equation}
    \label{eq:cns-fiarma-existence-1}
\text{for all}\quad n\in\nset,\;\Re(\mathrm{d}) < n+1/2\quad \xi\text{-}\mae\quad\text{on}\quad\{\sigma_n >
    0\}\;.
  \end{equation}
Then, we show that it must also imply ~(\ref{eq:cns-arma-sigma_n}) for all $n\in\nset$
in a second and last step. 

\noindent\textbf{Step~1.} Suppose
that~(\ref{eq:cns-fiarma-existence-1}) does not hold; let us show
that
Assertion~\ref{ass:cns-arma-fi-operator-functional-cond0-fiarma}
cannot hold. Since it is equivalent
to~(\ref{eq:Ieta-priem-cond-equiv}), it is sufficient to show that
$\overline{I}(\eta')=\infty$ for any arbitrary $\eta'>0$.
Let $\underline{m}$ be the smallest $n\in\nset$ for which
$\xi\lr{\lrcb{\Re(\mathrm{d}) \geq n+1/2}\cap\lrcb{\sigma_n >0}}>0$. Note that by their mere definitions, we have  $\underline{m}<\overline{m}$.
In addition, for all $0\leq  n <\underline{m}$, we have
$$
\xi \lr{ \lrcb{\Re(\mathrm{d}) \geq \underline{m}+1/2}\cap\lrcb{\sigma_n
    > 0} } \leq \xi(\lrcb{\Re(\mathrm{d}) \geq n+1/2}\cap\lrcb{\sigma_n >   0}) = 0 \;.
$$
Hence, $\kernelope{k}_n(v,v')=0$ for $\xi^{\otimes2}$-$\mae$ $(v,v')\in\lrcb{\Re(\mathrm{d}) \geq \underline{m}+1/2}\times\Vset$.
Now we have, using the definition of $\overline{I}(\eta')$
in~(\ref{eq:Ieta-priem-cond-equiv}) and what we just deduced,
\begin{align*}
\overline{I}(\eta') & \geq  \int_{\lrcb{\Re(\mathrm{d}) \geq \underline{m}+1/2}\times\Vset\times(-\eta',\eta')}
|\lambda|^{-2\Re(\mathrm{d}(v))} \abs{\sum_{0\leq n<\overline{m}}\frac{(-\rmi\lambda)^n}{n!}\,\kernelope{k}_n(v,v')}^2\;\xi(\rmd
                      v)\xi(\rmd v')\frac{\rmd\lambda}{2\pi}  \\
  & =  \int_{\lrcb{\Re(\mathrm{d}) \geq \underline{m}+1/2}\times\Vset\times(-\eta',\eta')}
|\lambda|^{-2\Re(\mathrm{d}(v))} \abs{\sum_{\underline{m} \leq n<\overline{m}}\frac{(-\rmi\lambda)^n}{n!}\,\kernelope{k}_n(v,v')}^2\;\xi(\rmd
v)\xi(\rmd v')\frac{\rmd\lambda}{2\pi} \;. 
\end{align*}
Note that, for all $(v,v')\in\Vset^2$ and $\lambda\in\rset$, we have
$$
|\lambda|^{-2\Re(\mathrm{d}(v))} \abs{\sum_{\underline{m} \leq
    n<\overline{m}}\frac{(-\rmi\lambda)^n}{n!}\,\kernelope{k}_n(v,v')}^2=|\lambda|^{2\underline{m}-2\Re(\mathrm{d}(v))}
\lr{\frac{\abs{\kernelope{k}_{\underline{m}}(v,v')}^2}{(\underline{m}!)^2}+o(1)}\;,
$$
where $o$-term tends to 0 as $\lambda\to0$. We get that, for all $(v,v')\in\lr{\lrcb{\Re(\mathrm{d}) \geq
  \underline{m}+1/2}\times\Vset}\cap \lrcb{\abs{\kernelope{k}_{\underline{m}}}^2>0}$
and $\eta'>0$,
$$
\int_{(-\eta',\eta')}
|\lambda|^{-2\Re(\mathrm{d}(v))} \abs{\sum_{\underline{m} \leq
    n<\overline{m}}\frac{(-\rmi\lambda)^n}{n!}\,\kernelope{k}_n(v,v')}^2\;\frac{\rmd\lambda}{2\pi} =\infty\;.
$$
With the previous lower bound on $\overline{I}(\eta')$, we deduce that
$\overline{I}(\eta')=\infty$ if
$$
\xi^{\otimes 2}\lr{\lr{\lrcb{\Re(\mathrm{d}) \geq
  \underline{m}+1/2}\times\Vset}\cap \lrcb{\abs{\kernelope{k}_{\underline{m}}}^2>0} }
>0 \;.
$$
By definition of $\sigma_n$ in~(\ref{eq:sigma_n-k_n-relation}), we
have, for all $v\in\Vset$,
$$
g(v):=\int_{\Vset}
\1_{\lrcb{\abs{\kernelope{k}_{\underline{m}}(v,v')}^2>0}}\;\xi(\rmd v')
>0\Leftrightarrow \sigma_{\underline{m}}(v)>0 \;.
$$
Hence,
$$
\xi^{\otimes 2}\lr{\lr{\lrcb{\Re(\mathrm{d}) \geq
  \underline{m}+1/2}\times\Vset}\cap \lrcb{\abs{\kernelope{k}_{\underline{m}}}^2>0}
} = \int_{\lrcb{\Re(\mathrm{d}) \geq
  \underline{m}+1/2}} g \;\rmd\xi 
$$
is positive if and only if $\xi\lr{ \lrcb{\Re(\mathrm{d}) \geq
  \underline{m}+1/2} \cap \lrcb{\sigma_{\underline{m}}>0} } >0$, which
is true by definition of $\underline{m}$. This concludes the first
step.

\noindent\textbf{Step~2.} Suppose now
that~(\ref{eq:cns-fiarma-existence-1}) does hold
but~(\ref{eq:cns-arma-sigma_n}) does not hold for all $n\in\nset$ and
let us show again that
Assertion~\ref{ass:cns-arma-fi-operator-functional-cond0-fiarma}
cannot hold.  Let us define $\tilde{m}$ as the smallest $n\in\nset$
such that~(\ref{eq:cns-arma-sigma_n}) does not hold. Again by definition of $\overline{m}$, we must
have $\tilde{m}<\overline{m}$, since~(\ref{eq:cns-arma-sigma_n}) holds
for $n=\overline{m}$ by definition of $\overline{m}$.  Take now an
arbitrary $\eta'\in(0,1\wedge\eta)$. We have shown in the preamble of
the proof that if~(\ref{eq:cns-fiarma-existence-1}) is satisfied,
then~(\ref{eq:cns-arma-sigma_n}) is equivalent to
$I_{n}(\eta')<\infty$. Hence, we can also see  $\tilde{m}$ as the smallest $n\in\nset$
such that $I_{n}(\eta')=\infty$. Thus, we have
$I_k(\eta')<\infty$ for all $0\leq k <\tilde{m}$ and $I_{\tilde{m}}(\eta')=\infty$. It follows that
Assertion~\ref{ass:cns-arma-fi-operator-functional-cond0-fiarma} is
not only equivalent to having $\overline{I}(\eta')<\infty$ as
in~(\ref{eq:Ieta-priem-cond-equiv}) but also to the condition
\begin{equation}
  \label{eq:Ieta-priem-cond-equiv-bis}
\widetilde{I}(\eta'):=  \int_{\Vset^2\times(-\eta',\eta')}
|\lambda|^{-2\Re(\mathrm{d}(v))} \abs{\sum_{\tilde{m}\leq n<\overline{m}}\frac{(-\rmi\lambda)^n}{n!}\,\kernelope{k}_n(v,v')}^2\;\xi(\rmd
v)\xi(\rmd v')\frac{\rmd\lambda}{2\pi} < \infty \;.
\end{equation}
Now, we observe that
\begin{align*}
  \widetilde{I}(\eta')& \geq
                        \int_{\lrcb{\Re(\mathrm{d}) < \tilde{m}+1/2}\times\Vset\times(-\eta',\eta')}
|\lambda|^{-2\Re(\mathrm{d}(v))} \abs{\sum_{\tilde{m}\leq n<\overline{m}}\frac{(-\rmi\lambda)^n}{n!}\,\kernelope{k}_n(v,v')}^2\;\xi(\rmd
v)\xi(\rmd v')\frac{\rmd\lambda}{2\pi} \;.
\end{align*}
Therefore, to conclude that $\widetilde{I}(\eta')=\infty$ (implying
that Assertion~\ref{ass:cns-arma-fi-operator-functional-cond0-fiarma}
does not hold),  by the
Minkowski inequality, it is sufficient to show that
\begin{align}\label{eq:fiarma-ex-step2-cond-finale}
\tilde{I}_{\tilde{m}}(\eta') =\infty
\quad\text{and, for all $n>\tilde{m}$,}\quad
\tilde{I}_{n}(\eta')  <\infty\;,
\end{align}
where, for all $n\in\nset$, we denoted
$$
\tilde{I}_{n}(\eta'):=
\int_{\lrcb{\Re(\mathrm{d}) < \tilde{m}+1/2}\times\Vset\times(-\eta',\eta')}
|\lambda|^{2n-2\Re(\mathrm{d}(v))}\,\abs{\kernelope{k}_n(v,v')}^2\;\xi(\rmd
v)\xi(\rmd v')\frac{\rmd\lambda}{2\pi} \;.
$$
For all $n\geq \tilde{m}$ we have, as in the previous computation of
$I_n$ that
$$
\tilde{I}_{n}(\eta')<\infty\Leftrightarrow\int_{\lrcb{\Re(\mathrm{d}) < \tilde{m}+1/2}}
\frac{\sigma_n^2(v)}{1+2n-2\Re(\mathrm{d}(v))}\;\xi(\rmd v)<\infty \;.
$$
For an integer $n>\tilde{m}$, we have $1+2n-2\Re(\mathrm{d}(v))\geq 2$
on $\lrcb{\Re(\mathrm{d}) < \tilde{m}+1/2}$, hence the right-hand
side of~(\ref{eq:fiarma-ex-step2-cond-finale}) follows as a consequence
of $\int\sigma_n^2\;\rmd\xi<\infty$. For $n=\tilde{m}$, the left-hand
side of~(\ref{eq:fiarma-ex-step2-cond-finale}) follows as a consequence
of~(\ref{eq:cns-arma-sigma_n}) not being satisfied for $n=\tilde{m}$
by definition of $\tilde{m}$.
\end{proof}
\subsection{Proofs of \Cref{sec:other-long-memory-fiarma}}
\begin{proof}[\bf Proof of \Cref{lem:duker-fiarma-fiarma}]
  Since $\epsilon$ is a white noise, as explained in
  \Cref{ex:rem-existence-fiarma-particular-cases}~\ref{item:WN-case},
  Assertion~\ref{ass:cns-arma-fi-operator-functional-cond1-fiarma} of
  \Cref{cor:cns-arma-fi-operator-functional-fiarma} only needs to be
  checked for $n=0$. The result follows since this case precisely
  corresponds to the conditions in~(\ref{eq:ass-LRD-fiarma}) with
  $D=\Id_{\cH_0}-N$.
\end{proof}
The proof of \Cref{prop:longmemory-fiarma} relies on the following
lemma where we recall that the open and closed complex unit discs of
$\cset$ are denoted by $\unitdisk$ and $\overline{\unitdisk}$,
respectively. This lemma will also be useful in the proofs of
\Cref{sec:examples-est-application}.
\begin{lemma}\label{lem:dev-1minusz-fiarma}
  Let $\cH_0$ be a separable Hilbert space and $N$ be in $\cN(\cH_0)$. Let $\varrho$ and $\varsigma$ such that
  \begin{equation}
    \label{eq:Ncond-fiarma-infsup}
\varrho\leq\inf\set{\pscal{\Re(N)\,x}{x}_{\cH_0}}{x\in\cH_0\,,\;\norm{x}_{\cH_0}=1}\leq\norm{N}_{\infty}\leq \varsigma\;,
\end{equation}
where $\Re(N)=(N^\adjoint+N)/2$.
  Then, there exist $\bop \in \cL_b(\cH_0)$ and
  $(\aop^{\ast}_k)_{k \in \nset} \in \cL_b(\cH_0)^\nset$ such that, for all $z \in \unitdisk$,
  \begin{align}
    \label{eq:dev-1minusz-fiarma}
      (1 - z)^{N-\Id} = \bop \left( \sum_{k= 0}^\infty (k+1)^{-N} z^k
      \right) + \sum_{k= 0}^\infty \aop^{\ast}_k z^k \;,
  \end{align}
  where the two infinite sums on the right-hand side are
  $\cL_b(\cH_0)$-valued power series with a convergence radius at least
  equal to 1.

  There further exist  $C_\varsigma, C_\varrho>0$ only depending on
  $\varsigma$ and $\varrho$ respectively such that 
    \begin{align}
\label{eq:N-fiarma-boundDelta_bop}
    &    \norm{\bop}_\infty\leq C_\varsigma\quad\text{and, for all
      $k\geq0$,}\quad    \norm{\aop^{\ast}_k}_\infty\leq C_\varsigma C_\varrho\,(k+1)^{-1-\varrho}\;.
    \end{align}
  Moreover, if $\varrho>0$, then
  (\ref{eq:dev-1minusz-fiarma}) continues to hold for all
  $z \in \overline{\unitdisk} \setminus \{1\}$ with the two infinite
  sums converging in $\cL_b(\cH_0)$.
\end{lemma}
\begin{proof}
  In the following, we denote by $\mathrm{n}$ the singular value
  function of $N$, defined on
  $\cG_0:=L^2(\Vset, \Vsigma, \xi)$ with decomposition operator
  $U$. Then, Condition~(\ref{eq:Ncond-fiarma-infsup}) car be rewritten
  as
    \begin{equation}
    \label{eq:Ncond-fiarma-infsup-bis}
\varrho\leq\essinfarg{\xi}_{v \in \Vset} \Re(\mathrm{n}(v))
\leq\essuparg{\xi}_{v \in \Vset} \lrav{\mathrm{n}(v)} \leq \varsigma\;.
  \end{equation}
  We now proceed in three steps.  We first show Relation
  \eqref{eq:dev-1minusz-fiarma} for all $z \in \unitdisk$, then that
  the bounds~(\ref{eq:N-fiarma-boundDelta_bop}) hold, and, finally, we
  extend \eqref{eq:dev-1minusz-fiarma} to
  $z \in \overline{\unitdisk} \setminus \{1\}$ when $\varrho > 0$.

  \noindent{\bf Step~1.} Let $z \in \unitdisk$, then
      \begin{equation}
        \label{eq:1-z-N-first-expansion}
  (1 - z)^{N-\Id} = \Id + \sum_{k \geq 1} N_k z^k \quad \text{with} \quad N_k = \prod_{j=1}^k \left( \Id - \frac{N}{j} \right)\;,\;\; \text{for all $k \geq 1$} \;.
\end{equation}
Define the integer $k_0 \geq 1$ by the condition $\varsigma< k_0\leq\varsigma+1$. Then, for
  all $j\geq k_0$,
  $\Id - \frac{N}{j} = \exp \left( \ln \left( \Id - \frac{N}{j}
    \right) \right) = \exp \left( - \sum_{\ell \geq 1} \frac{N^\ell}{\ell\,j^\ell}
  \right)$ and therefore, for all $k\geq k_0$,
  \begin{align}
\nonumber
  N_k &= \prod_{j = 1}^{k_0-1} \left( \Id - \frac{N}{j} \right) \exp
  \left( - \sum_{\ell \geq 1} \frac{N^\ell}{\ell} \sum_{j = k_0}^k
    \frac{1}{j^\ell} \right) \\
        \label{eq:Nk-decomp}
      &= \prod_{j = 1}^{k_0-1} \left( \Id - \frac{N}{j} \right) \;
        \exp
  \left( - N \sum_{j = k_0}^k
    \frac{1}{j} \right) \;\exp
  \left( - \sum_{\ell \geq 2} \frac{N^\ell}{\ell} \sum_{j = k_0}^k
    \frac{1}{j^\ell} \right)\;.    
  \end{align}
  Moreover, we have the following
  asymptotic expansions. For all $k\geq k_0$,
      \begin{align*}
      \sum_{j = k_0}^k \frac{1}{j} &= \sum_{j=1}^k \frac{1}{j} - \sum_{j=1}^{k_0-1} \frac{1}{j} = \ln(k+1) + \gamma_e - \sum_{j=1}^{k_0 - 1} \frac{1}{j} + \frac{\alpha_k}{k}\;, \\
      \sum_{j = k_0}^k \frac{1}{j^\ell}  &= \sum_{j=k_0}^{\infty} \frac{1}{j^\ell} - \sum_{j=k+1}^{\infty} \frac{1}{j^\ell} = \frac{\beta_\ell}{k_0^\ell} + \frac{\eta_{k,\ell}}{(\ell-1)k^{\ell-1}}\;,\;\; \text{for all $\ell \geq 2$}\;,
      \end{align*}
      where $\gamma_e$ is Euler's constant,  $\beta_\ell := \sum_{k=k_0}^{\infty}
      \left(\frac{k_0}{k}\right)^\ell$, and
      $(\alpha_k)_{k \geq 1}$ and
      $(\eta_{k,\ell})_{k \geq 1, \ell \geq 2}$ are some universal
      constants satisfying
      \begin{equation}
        \label{eq:boun-alpha-eta}
       \sup_{k \geq 1} \abs{\alpha_k} < \infty 
      \quad\text{and}\quad\sup_{k \geq 1, \ell \geq 2} \abs{\eta_{k,\ell}} < \infty\;.
      \end{equation}
      Also note that
      \begin{equation}
        \label{eq:bound-beta-Nfiarma}
        \sup_{\ell \geq 2} \beta_\ell =\beta_2 < \infty\;.
      \end{equation}
      Using these definitions in~(\ref{eq:Nk-decomp}), we obtain, for all
      $k \geq k_0$,
      $$
      N_k = \bop (k+1)^{-N} \exp\left(-N\frac{\alpha_k}{k} - \sum_{\ell \geq 2} \frac{N^\ell \eta_{k,\ell} }{(\ell-1) k^{\ell-1}}\right) 
      $$
      where
      \begin{equation}
        \label{eq:def-bop-Nfiarma}
      \bop = \prod_{j = 1}^{k_0-1} \left( \Id - \frac{N}{j} \right) \exp \left(-N \left(\gamma_e - \sum_{t=1}^{k_0-1} \frac{1}{t} \right)\right) \exp\left( - \sum_{\ell \geq 2} \left(\frac{N}{k_0}\right)^\ell \frac{\beta_\ell}{\ell} \right)\;.
    \end{equation}
    Using the previous equations in~(\ref{eq:1-z-N-first-expansion}),
    for all $z\in\unitdisk$, we can write $(1 - z)^{N - \Id}$ as 
      \begin{align*}
        \Id + \sum_{k=1}^{k_0 - 1} \prod_{j=1}^k \left( \Id - \frac{N}{j} \right) z^k 
         + \bop \sum_{k \geq k_0}  (k+1)^{-N} \exp\left(-N\frac{\alpha_k}{k} - \sum_{\ell \geq 2} \frac{N^\ell \eta_{k,\ell} }{(\ell-1) k^{\ell-1}}\right)  z^k\;.
      \end{align*}
      Thus Relation \eqref{eq:dev-1minusz-fiarma} follows by setting
      \begin{align}
        \label{eq:def-Delta0}
      \aop^{\ast}_0& := \Id - \bop \;,& \\
        \label{eq:def-Deltakinfk0}
 \aop^{\ast}_k &:=\prod_{j=1}^k \left( \Id - \frac{N}{j} \right) - \bop \; (k+1)^{-N} \;,      &\text{for $1 \leq k \leq k_0 -1$,} \\
        \label{eq:def-Deltaksupk0}
 \aop^{\ast}_k &:= \bop (k+1)^{-N} \left[ \exp\left(-N \frac{\alpha_k}{k} -
        \sum_{\ell \geq 2} \frac{N^\ell \eta_{k,\ell} }{(\ell-1)
        k^{\ell-1}}\right) - \Id \right]&\text{for $k \geq k_0$.}
      \end{align}
      
      \noindent{\bf Step~2.} By~(\ref{eq:def-bop-Nfiarma}),
      (\ref{eq:Ncond-fiarma-infsup})
      and~(\ref{eq:bound-beta-Nfiarma}), first note that 
      $\norm{\bop}_{\infty}$ can be bounded by a constant only
      depending on $\varsigma$ (since $k_0$ only depends on
      $\varsigma$ as well). Hence, by~(\ref{eq:def-Delta0}) and~(\ref{eq:def-Deltakinfk0}), for
      $k<k_0\leq\varsigma+1$ we can again bound $\norm{\aop^{\ast}_k}$ by a
      constant only depending on $\varsigma$. Now
      let $k \geq k_0$, defining
      $$
      \Phi_k :=  -N \frac{\alpha_k}{k}  - \sum_{\ell \geq 2}
      \frac{N^\ell \eta_{k,\ell} }{(\ell-1) k^{\ell-1}}\;,
      $$
      Relation~(\ref{eq:def-Deltaksupk0}) yields
      $$
      \norm{\aop^{\ast}_k}_{\infty}
      \leq \norm{\bop}_{\infty} \norm{(k+1)^{-N}}_{\infty} \sum_{t \geq 1} \frac{\norm{\Phi_k}_{\infty}^t}{t!}\;.
      $$
      Using the singular value function $\mathrm{n}$ with~(\ref{eq:Ncond-fiarma-infsup-bis}), we have 
      $$
      \norm{(k+1)^{-N}}_{\infty} =
      \norm{(k+1)^{-M_{\mathrm{n}}}}_{\infty} =
      \norm{M_{(k+1)^{-\mathrm{n}}} }_{\infty} = \essuparg{\xi}_{v \in
        \Vset}\abs{(k+1)^{-\mathrm{n}(v)}} \leq (k+1)^{- \varrho} \;.
      $$
      Using the upper bound of operator norm of $N$ in~(\ref{eq:Ncond-fiarma-infsup}), we have
      \begin{align*}
      \norm{\Phi_k}_{\infty}
      &\leq \varsigma \frac{\abs{\alpha_k}}{k}+ \sum_{\ell \geq 2} \frac{\varsigma^\ell \eta_{k,\ell}}{(\ell-1) k^{\ell-1}}\\ 
      &= \frac{\varsigma}k \left(\abs{\alpha_k} + \sum_{\ell
        \geq 1} \frac{\varsigma^\ell}{\ell}
        \eta_{k,\ell+1}k^{1-\ell}\right)\\
        &\leq \frac{\varsigma}k \left(\abs{\alpha_k} + \lr{\sup_{k
          \geq 1, \ell \geq 2} \abs{\eta_{k,\ell}}}
          \,\varsigma \lr{1-\frac{\varsigma}{k_0}}^{-1}\right) \;.
      \end{align*}
      By~(\ref{eq:boun-alpha-eta}), this bound only depend on
      $\varsigma$ (since $k_0$ does as well). Gathering the obtained
      bounds we get~(\ref{eq:N-fiarma-boundDelta_bop}).

      \noindent{\bf Step~3.} We now assume $\varrho > 0$ and extend
      \eqref{eq:dev-1minusz-fiarma} to $\overline{\unitdisk} \setminus
      \{1\}$, that is to the case $z=\rme^{-\rmi \lambda}$ for some
      $\lambda \in \tore \setminus \{0\}$. For such a $\lambda$, we already have, for all $0 < a < 1$, 
      $$
      (1- a \rme^{-\rmi \lambda})^{N - \Id}  = \bop \sum_{k \geq 0} (k+1)^{-N} a^k \rme^{-\rmi \lambda k} + \sum_{k \geq 0} \aop^{\ast}_k a^k \rme^{-\rmi \lambda k} \; .
      $$
      Moreover, $(1- \rme^{-\rmi \lambda})^{N - \Id} = \lim_{a
        \uparrow 1} (1- a \rme^{-\rmi \lambda})^{N - \Id}$ by
      continuity of $z \mapsto (1-z)^{N- \Id}$ in
      $\overline{\unitdisk} \setminus \{1\}$ and $\sum_{k \geq 0}
      \aop^{\ast}_k \rme^{-\rmi \lambda k} = \lim_{a \uparrow 1} \sum_{k
        \geq 0} \aop^{\ast}_k a^k \rme^{-\rmi \lambda k}$  because $\sum_{k
        \geq 0} \norm{\aop^{\ast}_k}_{\infty} < \infty$. It remains
      to show that $\sum_{k \geq 0} (k+1)^{-N} z$ is well defined on
      $\unitcircle\setminus\{1\}$ and that, for $\lambda\in\tore\setminus\{0\}$,
      $\sum_{k \geq 0} (k+1)^{-N} a^k \rme^{-\rmi
        \lambda k}$ converges to $\sum_{k \geq 0} (k+1)^{-N} \rme^{-\rmi
        \lambda k}$  as $a\uparrow1$. We prove these facts at once by applying
      \Cref{lem:powerseries-vector-fiarma} with $a_k=(k+1)^{-N}$. We already used in Step~2
      that, for all $k \in \nset$, we
      have $\norm{(k+1)^{-N}}_{\infty} \leq (k+1)^{-\varrho}$.   Since $\varrho>0$, we get that
      $\norm{(k+1)^{-N}}_{\infty}\to0$ as $k\to\infty$. 
      Hence, to apply \Cref{lem:powerseries-vector-fiarma} it
      only remains to show
      \begin{align}
      \label{eq:last-of-the-last-fiarma}
              \sum_{k \in \nset} \norm{(k+1)^{-N} -
        (k+2)^{-N}}_{\infty} <\infty\;.
      \end{align}        
      Note that we have, for all $k\in\nset$,
      \begin{align}
      \label{eq:last-of-the-last-1-fiarma}
              \norm{(k+1)^{-N} -
        (k+2)^{-N}}_{\infty} =  \essuparg{\xi}_{v \in
          \Vset}   \abs{(k+1)^{-\mathrm{n}(v)} - (k+2)^{-\mathrm{n}(v)}}\;.
      \end{align}        
      Moreover, for all $k\in\nset$, and $\xi-\mae\,v\in\Vset$,
      since $\Re(\mathrm{n}(v))\geq\varrho>0$, we have
      \begin{align*}
      \abs{(k+1)^{-\mathrm{n}(v)} -
        (k+2)^{-\mathrm{n}(v)}}&=\abs{k+1}^{-\Re(\mathrm{n}(v))}\abs{1-\exp\left(-\ln\left(1+\frac1{k+1}\right)\,\mathrm{n}(v)\right)}\\
&        \leq \varsigma\;\alpha(\varsigma\ln(2))\;(k+1)^{-\varrho}\;\ln\left(1+\frac{1}{k+1}\right)\;,
      \end{align*}
      where, here, we set, for
      any $r>0$,
      $
      \alpha(r):=\sup\set{\abs{\frac{1-\rme^{-z}}{z}}}{z\in\cset\,\,0<\abs{z}\leq r}
      $.
      This leads
      to the asymptotic bound, as $k\to\infty$,
      $
       \essuparg{\xi}_{v \in
          \Vset}   \abs{(k+1)^{-\mathrm{n}(v)} - (k+2)^{-\mathrm{n}(v)}}=\bigo{(k+1)^{-\varrho-1}}
      $. 
       Hence, with~(\ref{eq:last-of-the-last-1-fiarma}) and the
       assumption $\varrho>0$, we obtain~(\ref{eq:last-of-the-last-fiarma}).
  \end{proof}

\begin{proof}[\bf Proof of \Cref{prop:longmemory-fiarma}]
  The first condition in~(\ref{eq:ass-LRD-fiarma}) gives
  that~(\ref{eq:Ncond-fiarma-infsup}) holds with
  $\varrho\geq1/2$. Applying \Cref{lem:dev-1minusz-fiarma},
  there exists $\bop \in \cL_b(\cH_0)$ and
  $(\aop^{\ast}_k)_{k \in \nset} \in \cL_b(\cH_0)^\nset$ with
  $\norm{\aop^{\ast}_k}_{\infty}=O(k^{-3/2})$ such that
  $(1 - \rme^{- \rmi \lambda})^{N - \Id} = \bop \sum_{k=0}^{\infty}
  (k+1)^{-N} \rme^{- \rmi \lambda k} + \sum_{k = 0}^\infty \aop^{\ast}_k
  \rme^{- \rmi \lambda k}$ in $\cL_b(\cH_0)$ for all
  $\lambda \in \tore\setminus\{0\}$, thus concluding the proof.
\end{proof}

\subsection{Proofs of \Cref{sec:est-main-assumpt-main}}
\label{sec:proof-estimation}
\subsubsection{Preliminary results}
In the following, for two separable Hilbert spaces $\cH_0$ and $\cI_0$
and a finite non-negative measure $\mu$ on $\lr{\tore,\btore}$, we
denote by $\norm{\cdot}_{1,1}$ the natural norm of the Bochner space
$L^{1,1}(\cH_0,\cI_0,\mu):=L^1\lr{\tore,\btore,\cS_1(\cH_0,\cI_0),\mu}$,
that is,
$$
\norm{g}_{1,1}:=\int\norm{g(\lambda)}_1\;\rmd\mu\;.
$$
We use the notation
$$
B_{1,1}(r,\cH_0,\cI_0,\mu)=\set{g\in
  L^1\lr{\tore,\btore,\cS_1(\cH_0,\cI_0),\mu}}{\norm{g}_{1,1}\leq r}\;,
$$
for the ball of radius $r$ in the Banach space
$L^{1,1}(\cH_0,\cI_0,\mu)$. As usual, if $\cI_0=\cH_0$, we drop $\cI_0$ in
the notation, thus writing $L^{1,1}(\cH_0,\mu)$ and $B_{1,1}(r,\cH_0,\mu)$ in
this case. Also if $\mu=\lebtore$ we drop the measure in the notation,
thus writing $L^{1,1}(\cH_0,\cI_0)$ and $B_{1,1}(r,\cH_0,\cI_0)$, or
$L^{1,1}(\cH_0)$ and $B_{1,1}(r,\cH_0)$ if $\cI_0=\cG_0$, in
this case.

These definitions and those introduced in
\Cref{sec:est-main-assumpt-main} (such as $\cQ$ and $\tilde{\cQ}$)
will be useful in the following. Recall, in particular, that
$\mathbb{F}_b\lr{\tore,\btore,\cE}$ denotes the set of bounded
measurable functions from $\lr{\tore,\btore}$ to
$\lr{\cE,\borel(\cE)}$. We will also need to define $\mathbb{F}_{b,b}\lr{\lr{\cH_0,\cG_0},\lr{\cI_0,\cJ_0}}$
as the product vector space
$\mathbb{F}_b\lr{\tore,\btore,\cL_b(\cI_0,\cJ_0)}\times\mathbb{F}_b\lr{\tore,\btore,\cL_b(\cH_0,\cG_0)}$,
endowed with the max norm 
$$
\norm{(L,R)}_{b,b}:=
\max(\sup(L),\sup(R))\quad\text{for all}\quad
(L,R)\in\mathbb{F}_{b,b}\lr{\lr{\cH_0,\cG_0},\lr{\cI_0,\cJ_0}}
\;.
$$
This simply extends the definition of $\mathbb{F}_{b,b}(\cH_0,\cG_0)$
already introduced in \Cref{sec:est-main-assumpt-main}, which can be
seen as a short-hand notation for
$\mathbb{F}_{b,b}\lr{\lr{\cH_0,\cG_0},\lr{\cH_0,\cG_0}}$.

We now derive a series of useful lemmas. 
  \begin{lemma}
  \label{lem:cont-quad-func-general}
Let  $\mu$ be a finite non-negative measure on $\lr{\tore,\btore}$ and $\cH_0$, $\cG_0$,  $\cI_0$ and $\cJ_0$ be four separable Hilbert
spaces and $g\in L^{1,1}(\cH_0,\cI_0,\mu)$.
Then the mapping $\cQ_{g,\mu}$ defined by
\begin{align}\label{eq:cQ-def-general-plus}
\cQ_{g,\mu}(L,R)= \int  L\, g \,R^\adjoint\;\rmd\mu 
\end{align}
is a continuous sesquilinear mapping from $\mathbb{F}_{b,b}\lr{\lr{\cH_0,\cG_0},\lr{\cI_0,\cJ_0}}$ to $\cS_1\lr{\cG_0,\cJ_0}$
satisfying
$$
\sup\set{\norm{\cQ_{g,\mu}(L,R)}_1}{\norm{(L,R)}_{b,b}\leq1}\leq\norm{g}_{1,1}\;.
$$
Consequently, for any positive radius $r$, the set
$\set{\cQ_{g,\mu}}{g\in
  B_{1,1}(r,\cH_0,\cI_0,\mu)}$
  is equicontinuous in
$\cC\lr{\mathbb{F}_{b,b}\lr{\lr{\cH_0,\cG_0},\lr{\cI_0,\cJ_0}},\cS_1\lr{\cG_0,\cI_0}}$.
\end{lemma}
\begin{proof}
For all $(L,R)\in\mathbb{F}_{b,b}\lr{\lr{\cH_0,\cG_0},\lr{\cI_0,\cJ_0}}$, $g\in L^{1,1}(\cH_0,\cI_0)$ and $\lambda\in\tore$, we have
$$
\norm{L(\lambda)\, g(\lambda)
  \,R(\lambda)^\adjoint}_1\leq \norm{L(\lambda)}_\infty\, \norm{g(\lambda)}_1
\,\norm{R(\lambda)^\adjoint}_\infty\,\;,
$$
which is thus integrable with respect to $\mu$. Moreover, we obtain that
$$
\norm{\cQ_{g,\mu}(L,R)}_1\leq \norm{g}_{1,1}\,\sup(L)\,\sup(R)\;.
$$
The given claim immediately follows as well as its consequence. 
\end{proof}
We also derive the following lemma, which will be useful in the following.
\begin{lemma}
  \label{lem:rel-comp-general1}
  Let $\cH_0$, $\cG_0$ and  $\cI_0$ be three separable Hilbert spaces,
  $\Theta$ be a compact metric space, and let $L$ and $R$ be two
  continuous mappings from $\Theta\times\tore$ into
  $\cL_b(\cH_0,\cG_0)$ and   $\cL_b(\cH_0,\cI_0)$, respectively.
  Then, for any positive radius $r$, the set
  $\cR(r):=\set{\tilde{\cQ}_g^{(L,R)}}{g\in B_{1,1}(r,\cH_0)}$
  is equicontinuous in  $\cC\lr{\Theta,\cS_1\lr{\cI_0,\cG_0}}$.
\end{lemma}
  \begin{proof}
    Let $r>0$. Since $\Theta\times\tore$ is compact, $R$ and $L$ are
    uniformly continuous on $\Theta\times\tore$ and we get that
    $\theta \mapsto
      (L(\theta,\cdot),R(\theta,\cdot)) \in
      \cC\lr{\Theta,\mathbb{F}_{b,b}\lr{\lr{\cH_0,\cG_0},\lr{\cH_0,\cI_0}}}$.
    By \Cref{lem:cont-quad-func-general}, we get that $\cR(r)$ is
    equicontinuous in
    $\cC\lr{\Theta,\cS_1\lr{\cI_0,\cG_0}}$.
  \end{proof}
We next provide four last preliminary lemmas, one about the
non-centered periodogoram, and two dealing with the centering term. 
  \begin{lemma}
    \label{lem:periodo-bounded-general}
    Let $\cH_0$ and $\cI_0$ be two separable Hilbert
    spaces such that $\cI_0$ is continuously embedded in
    $\cH_0$. Assume~\ref{item:ergo-assump} and suppose moreover that
    $X_0\in\cI_0$ $\as$ with
    $\PE\lrb{\norm{X_0}_{\cI_0}^2}<\infty$. Then, for all $n\geq1$,
    $I_n^X\in L^{1,1}(\cH_0,\cI_0)$  $\as$ and we have
    $$
    \sup_{n\geq1}\norm{I_n^X}_{1,1}<\infty\qquad\as
    $$
  \end{lemma}
  \begin{proof}
    Note that, for all $n\geq1$, we have,
  using the
  definition of $I_n^X$, and then that of $d^X_n$,
  \begin{align*}
    \norm{I_n^X}_{1,1} & =  \int\norm{I_n^X}_1\;\rmd\lebtore\\
    &\leq\int
      \norm{d_n^X(\lambda)}_{\cI_0}\norm{d_n^X(\lambda)}_{\cH_0}\;\rmd\lebtore\\
                       &\leq\lr{\int \norm{d_n^X(\lambda)}_{\cI_0}^2\;\rmd\lebtore}^{1/2}\;\lr{\int\norm{d_n^X(\lambda)}_{\cH_0}^2\;\rmd\lebtore}^{1/2}\\
&      = \lr{\frac1{n}\sum_{k=1}^n\norm{X_k}_{\cI_0}^2}^{1/2}\;\lr{\frac1{n}\sum_{k=1}^n\norm{X_k}_{\cH_0}^2}^{1/2}\;,
  \end{align*}
  where in the right-hand side of the first line $\norm{\cdot}_1$
  denotes the $\cS_1(\cH_0,\cI_0)$-norm. By~\ref{item:ergo-assump},
  with the Birkhoff ergodic theorem, we get that the right-hand of the
  previous bound converges $\as$ The claim  \as uniform bound of $\lr{\norm{I_n^X}_{1,1}}_{n\geq1}$ follows.
  \end{proof}
\begin{lemma}
  \label{lem:periodo-centering}
  Recall that $I_n^X$ and $I_n^{X^c_n}$ denote the periodograms respectively computed
  from $X_1,\dots,X_n$ and from $X_{n,1}^{c},\dots,X^c_{n,n}$, as
  defined in~(\ref{eq:centering}). Suppose that $X_1,\dots,X_n\in\cH_0$.
  Then, $I_n^X$ and $I_n^{X^c_n}$ belong to $\cS_1(\cH_0)$ and we have, for all $\lambda\in\tore$,
  $$
  \norm{I_n^X(\lambda)-I_n^{X^c_n}(\lambda)}_1\leq
  \norm{\frac1n\sum_{j=1}^nX_j}_{\cH_0}^2\,F_n(\lambda) +2   \norm{\frac1n\sum_{j=1}^nX_j}_{\cH_0}\,\norm{d_n^X(\lambda)}_{\cH_0}\,\lr{F_n(\lambda)}^{1/2}\;,
  $$
  where $F_n$ denotes the Fejér kernel defined by
    \begin{equation}
    \label{eq:def-fejer}
  F_n(\lambda)=\frac1n\lrav{\sum_{k=1}^n\rme^{-\rmi\,\lambda\,k}}^2\;.    
  \end{equation}
\end{lemma}
\begin{proof}
  By~(\ref{eq:centering}) and~(\ref{eq:def-dnX}), we have, for all $\lambda\in\tore$,
  $$
  d^{X_n^c}_n(\lambda)=d_n^X(\lambda)-\lr{\frac1n\sum_{j=1}^nX_j}\,\lr{\frac1{\sqrt{n}}\sum_{k=1}^n\rme^{-\rmi\,\lambda\,k}}\;.
  $$
  Computing
  $I_n^{X^c_n}(\lambda)=d^{X_n^c}_n(\lambda)\otimes d^{X_n^c}_n(\lambda)$
  and using that $\norm{x\otimes
    y}_1=\norm{x}_{\cH_0}\,\norm{y}_{\cH_0}$ for all $x,y\in\cH_0$, we
  easily get the result. 
\end{proof}
We have the following result on the process $X$. 
\begin{lemma}
  \label{lem:A1A2implyA4}
  Let $\cH_0$ be a separable Hilbert
  space. Assume~\ref{item:ergo-assump}
  and~\ref{item:sp_density-assump}. Then, the process $X$ is valued in
  a finite-dimensional space $\cG_0\subset\cH_0$ or, if it is not the
  case, there always exists an orthonormal sequence
  $(\phi_k)_{k\in\nset}$ of $\cH_0$ and a sequence
  $s=(s_k)_{k\in\nset}\in[1,\infty)^\nset$ such that
  Conditions~\ref{item:s-sob-assump} and~\ref{item:X-sob-assump} in
  \Cref{thm:periodo-func} hold.
\end{lemma}
\begin{proof}
  Define $\Sigma=\PEarg{X_0\otimes X_0}$. Since
  $\Sigma\in\cS_1^+(\cH_0)$, there exists a finite or countable
  non-increasing sequence
  $\lr{\sigma_k}_{0\leq k<K}\in(0,\infty)^{\nset}$ and an orthonormal
  sequence $\lr{\phi_k}_{0\leq k<K}\in\cH_0^{\nset}$ such that
  $$
  \Sigma=\sum_{0\leq
    k<K}\sigma^2_k\,\phi_k\otimes\phi_k\quad\text{and}\quad\sum_{0\leq
    k<K}\sigma_k^2<\infty\;.
  $$
  In particular, we have that $\as$, $X_0$ is valued in
  $\cspanarg[\cH_0]{\phi_k\,,\,0\leq k<K}$.  If $K$ is finite, then
  $X_0$ is valued in the finite-dimensional space
  $\cG_0=\lspan{\phi_k\,,\,0\leq k<K}$.

  From now on, we take $K=\infty$. By \Cref{lem:1}, we can find
  $s=(s_k)_{k\in\nset}\in[1,\infty)^{\nset}$, non-decreasing and going
  to $\infty$ (hence, satisfying Condition~\ref{item:s-sob-assump} in
  \Cref{thm:periodo-func}), such that
  $\sum_{k\in\nset}s_k^2\sigma^2_k<\infty$. Defining $\cH_0^s$
  by~(\ref{eq:defHOs}) and its inner product by~(\ref{eq:pscalHOs}),
  we get that
  $$
  \PEarg{\norm{X_0}_{\cH_0^s}^2}=\sum_{k\in\nset}s_k^2\sigma^2_k<\infty\;.
  $$
  We thus have Condition~\ref{item:X-sob-assump} in \Cref{thm:periodo-func}.
\end{proof}
The following lemma is used to treat the centering term in the next
result, and also to prove \Cref{thm:1}.
\begin{lemma}
\label{lem:as-mean-cv}  Let $\cH_0$ be a separable Hilbert
  space. Assume~\ref{item:ergo-assump}
  and~\ref{item:sp_density-assump}. Then, 
  \begin{equation}
    \label{eq:cv-as-emp-mean}
\lim_{n\to\infty} \frac1n\sum_{j=1}^nX_j = \PEarg{X_0}  \quad \text{in $\cH_0$}\;,\quad\as    
  \end{equation}
\end{lemma}
\begin{proof}
  If we can find a finite-dimensional space $\cG_0\subset\cH_0$ such
  that $X$ is valued in $\cG_0$, then~(\ref{eq:cv-as-emp-mean})
  follows straightforwardly from the Birkhoff ergodic theorem. If not,
  by \Cref{lem:A1A2implyA4}, we can find an orthonormal sequence
  $(\phi_k)_{k\in\nset}$ of $\cH_0$ such that
  Conditions~\ref{item:s-sob-assump} and~\ref{item:X-sob-assump} in
  \Cref{thm:periodo-func} hold. As a consequence, by the Birkhoff
  ergodic theorem, we get that
  $$
\lim_{n\to\infty}  \frac1n\sum_{j=1}^n\norm{X_j}_{\cH_0^s}= \PEarg{\norm{X_0}_{\cH_0^s}}\quad\as
  $$
  Since
  $\norm{\frac1n\sum_{j=1}^nX_j}_{\cH_0^s}\leq\frac1n\sum_{j=1}^n\norm{X_j}_{\cH_0^s}$
  for all $n\geq1$, we get that, $\as$, there exists $r>0$ such that
  $\sup_{n\geq1}\norm{\frac1n\sum_{j=1}^nX_j}_{\cH_0^s}\leq r$.  Using
  that $(s_k)_{k\in\nset}$ is going to infinity, we get that the
  operator $\sum_{k\in\nset}s_k^{-1}\phi_k\otimes\phi_k$ belongs to
  $\cS_\infty(\cH_0)$. Since the image by this operator of the unit
  $\cH_0$-ball is the unit $\cH_0^s$-ball, we get that all
  $\cH_0^s$-balls are compact in $\cH_0$. In particular, we have that
  $\lr{\frac1n\sum_{j=1}^nX_j}_{n\geq1}$ is $\as$ valued in a compact
  subset of $\cH_0$.  Therefore, it only remains to
  show that, $\as$, the only possible accumulation point of this
  sequence is $\PEarg{X_0}$. This fact follows by using the Birkhoff
  ergodic theorem again, which gives us, for all $x\in\cH_0$, 
  $$
  \lim_{n\to\infty}\pscal{\frac1n\sum_{j=1}^nX_j}{x}_{\cH_0}=
  \lim_{n\to\infty}\frac1n\sum_{j=1}^n\pscal{X_j}{x}_{\cH_0} = \PEarg{\pscal{X_0}{x}_{\cH_0}}= \pscal{\PEarg{X_0}}{x}_{\cH_0}\quad\as
  $$
  Hence we get the
  convergence~(\ref{eq:cv-as-emp-mean}).
\end{proof}
  We can now state a result which applies both in the context of
  \Cref{thm:periodo-func-fidi} and \Cref{thm:periodo-func}.
  \begin{theorem}
\label{thm:periodo-func-general}
Let $\cH_0$ and $\cG_0$ be two separable Hilbert
spaces. Assume~\ref{item:ergo-assump}
and~\ref{item:sp_density-assump}, and suppose that $(\Theta,\Delta)$
is a compact metric space. Let $L$ and $R$ in
$\cC\lr{\Theta\times\tore,\cL_b(\cH_0,\cG_0)}$. Moreover, suppose
that, $\as$, there exists, for all $\theta\in\Theta$,  a compact
subset $B\subset\cS_1\lr{\cG_0}$ such that, for all $n\geq1$
$\tilde{\cQ}_{I_n^X}^{(L,R)}(\theta)\in B$.  Then, we have
  \begin{equation}
    \label{eq:unif-periodo-mapping-convergence}
\lim_{n\to\infty}\tilde{\cQ}_{I_n^{X^c_n}}^{(L,R)}=\tilde{\cQ}_{\nu_X}^{(L,R)}\quad\text{uniformly
  in}\quad\cC\lr{\Theta,\cS_1\lr{\cG_0}}\;,\quad\as
\end{equation}
\end{theorem}
\begin{proof}
  Thanks to the centering~(\ref{eq:centering}), we can replace all
  $X_k$'s by $X_k-\PEarg{X_0}$ without modifying $X^c_{n,k}$
  for any $n,k\geq1$. Hence,  without loss of generality, from now on
  in this proof, we assume that $\PEarg{X_0}$ is
  zero, that is, the process $X$ is centered.
  
  By \Cref{lem:periodo-centering} and using that $\int F_n\;\rmd\lebtore=1$, first note that for all
  $\theta\in\Theta$ we have, for all $n\geq1$,
  $$
  \norm{\tilde{\cQ}_{I_n^{X^c_n}}(\theta)-\tilde{\cQ}_{I_n^{X}}(\theta)}_1\leq
  A_n\,\lr{A_n+B_n}\,\sup_{\stackrel{\theta\in\Theta}{\lambda\in\tore}}
  \norm{L(\theta,\lambda)}_\infty\, \sup_{\stackrel{\theta\in\Theta}{\lambda\in\tore}}\norm{R(\theta,\lambda)}_\infty\;,
  $$
  where we set
  $$
  A_n:= \norm{\frac1n\sum_{j=1}^nX_j}_{\cH_0}\quad\text{and}\quad B_n=2   \,
  \int \norm{d_n^X}_{\cH_0}\,\lr{F_n}^{1/2}\;\rmd\lebtore\;.
  $$
  By the Cauchy-Schwartz inequality and then the Parseval identity, we have
  $$
  B_n\leq 2\,\lr{  \int \norm{d_n^X}^2_{\cH_0}\;\rmd\lebtore}^{1/2}=2\,\lr{\frac1n\sum_{k=1}^n\norm{X_k}_{\cH_0}^2}^{1/2}\;,
  $$
  which, by the Birkhoff ergodic theorem
  and~\ref{item:sp_density-assump}, converges $\as$. On the other hand,
  using \Cref{lem:as-mean-cv}, we have that, $\as$,
  $\lim_{n\to\infty}A_n=0$. Hence, we finally get that
  $$
  \lim_{n\to\infty}\sup_{\theta\in\Theta}\norm{\tilde{\cQ}_{I_n^{X^c_n}}(\theta)-\tilde{\cQ}_{I_n^{X}}(\theta)}_1
  = 0\quad\as
  $$
  Therefore, to prove~(\ref{eq:unif-periodo-mapping-convergence}), we
  can replace $I_n^{X^c_n}$ by  $I_n^{X}$, that is, it only remains to
  prove that 
  \begin{equation}
    \label{eq:unif-periodo-mapping-convergence-noncentered}
    \lim_{n\to\infty}\tilde{\cQ}_{I_n^{X}}^{(L,R)}=\tilde{\cQ}_{\nu_X}^{(L,R)}\quad\text{uniformly
  in}\quad\cC\lr{\Theta,\cS_1\lr{\cG_0}}\;,\quad\as
\end{equation}
  We immediately have from \Cref{prop:cQ-well-def} that
  $\tilde{\cQ}_{I_n^X}^{(L,R)}$ and $\tilde{\cQ}_{\nu_X}^{(L,R)}$ belong
  to $\cC\lr{\Theta,\cS_1\lr{\cG_0}}$. The rest of the proof is now in three steps. First, we show that, $\as$, every sequence valued in
  $\set{\tilde{\cQ}_{I_n^X}^{(L,R)}}{n\geq1}$ admits a subsequence
  which converges uniformly in 
  $\cC\lr{\Theta,\cS_1\lr{\cG_0}}$. Second, we show that, for
  all $x,y\in\cG_0$ and $\theta\in\Theta$, we have 
  \begin{equation}
    \label{eq:step1-pointwise-cv}
  \lim_{n\to\infty} x^\adjoint\tilde{\cQ}_{I_n^X}^{(L,R)}(\theta) y = x^\adjoint\tilde{\cQ}_{\nu_X}^{(L,R)}(\theta) y\;,\quad\as
  \end{equation}
  We then conclude in Step~3 from these two results.

  \noindent\textbf{Step~1}.   By~\ref{item:sp_density-assump}, we have
  $\PEarg{\norm{X_0}_{\cH_0}^2}<\infty$, and we can apply
  \Cref{lem:periodo-bounded-general} with $\cI_0=\cH_0$ and get that, $\as$, there exists $C$ such
  that, for all $n\in\nset$, $\norm{I_n^X}_{1,1}\leq C$, where here
  $\norm{\cdot}_{1,1}$ denotes the norm in $L^{1,1}(\cH_0)$.  We conclude
  with \Cref{lem:rel-comp-general1} that, $\as$,
  $\set{\tilde{\cQ}_{I_n^X}^{(L,R)}}{n\geq1}$ is equicontinuous in
  $\cC\lr{\Theta,\cS_1(\cG_0)}$.  Using the last assumption of the theorem,
  we also have that, $\as$, for all $\theta\in\Theta$, there is a compact subset $B$ of
  $\cS_1(\cG_0)$ such that $\set{\tilde{\cQ}_{I_n^X}^{(L,R)}(\theta)}{n\geq1}$
  is included in $B$. Thus, $\as$, the
  Ascoli-Arzelà theorem (see \cite[Section~7.10]{royden:1988}) applies
  and any sequence in  $\set{\tilde{\cQ}_{I_n^X}^{(L,R)}}{n\geq1}$ admits a
  uniformly convergent subsequence in $\cC\lr{\Theta,\cS_1(\cG_0)}$, which
  concludes the proof of Step~1.
  
  \noindent\textbf{Step~2}. This step is similar to the scalar
  case and we follow the ideas of \cite{hannan1973}. Let $x,y\in\cG_0$, $\theta\in\Theta$,
  and denote $\bx=x^\adjoint L(\theta,\cdot)$ and $\by=y^\adjoint R(\theta,\cdot)$, so that
  $\bx$ and $\by$ are continuous functions from $\tore$ to $\cH_0^\adjoint=\cL_b(\cH_0,\cset)$,
  and~(\ref{eq:step1-pointwise-cv}) can be written as
  \begin{equation}
    \label{eq:step1-pointwise-cv2}
  \lim_{n\to\infty} \cQ_{I_n^X}\lr{\bx,\by} =\cQ_{\nu_X}\lr{\bx,\by}\;, \quad\as
  \end{equation}
 We can write, for any $N\in\nset^*$,
  $$
  \bx= F_N\star \bx + \lr{\bx-F_N\star \bx}\;,
  $$
  where $\star$ denotes the convolution of locally integrable $2\pi$-periodic
  functions,
  $$
  f\star g(\lambda)=\int f(\lambda')\,g(\lambda-\lambda')\;\lebtore(\rmd\lambda')\;,\quad\lambda\in\tore\;,
  $$
  and $F_N$ is the Fejér kernel defined by~(\ref{eq:def-fejer}).
  Using standard properties of Fejér's kernel and the fact that
  $\lambda\mapsto \bx(\lambda)$ is continuous on $\rset$, we have, denoting
  $\mathbf{e}_\ell(\lambda)=\rme^{\rmi\lambda\ell}$ so that
  $(\mathbf{e}_\ell)_{\ell\in\zset}$ is a Hilbert basis of $\cL_0:=L^2\lr{\tore,\btore,\lebtore}$,
  \begin{align}\label{eq:fejer1}
&    F_N\star
    \bx=\sum_{\ell=-N}^N\alpha_{\ell}(N)\;c_\ell(\bx)\;\mathbf{e}_{\ell}\;,\\
    \nonumber
&    \text{with}\quad\alpha_{\ell}(N)=1-\frac{|\ell|}{N}\quad\text{and}\quad
    c_\ell(\bx)  = \int
                                          \bx(\lambda)\rme^{-\rmi\ell\lambda}\;\lebtore(\rmd\lambda)\;,\\
    \label{eq:fejer2}
&\lim_{N\to\infty}\sup_{\lambda\in\rset}    \norm{\bx(\lambda)-F_N\star \bx(\lambda)}_{\infty}=0
\;.
  \end{align}
  Eq~(\ref{eq:fejer2}) can be interpreted as saying that $F_N\star \bx$ converges to
  $\bx$ in
  $\mathbb{F}_b\lr{\tore,\btore,\cL_b(\cH_0,\cset)}$. The same holds
  with $\by$ replacing $\bx$ and, applying
  \Cref{lem:cont-quad-func-general}
  with $\cI_0=\cH_0$ and $\cG_0=\cJ_0=\cset$ and $\mu=\lebtore$, since by Step~1, $\set{I_n^X}{n\geq1}$
  remains in a ball of
  $L^{1,1}$ $\as$, we have, $\as$,
  \begin{align}
    \label{eq:fejer-remainder-part}
    \lim_{N\to\infty}
\sup_{g\in\set{I_n^X}{n\geq1}}\lrav{\cQ_g(F_N\star
    \bx,F_N\star
    \by) - \cQ_g(\bx,\by)}\;.
  \end{align}
  Similarly by the continuity of $\cQ_{\nu_X}=\cQ_{f_X,\mu}$ (with
  $f_X$ the density of $\nu_X$ with respect to $\mu=\norm{\nu_X}_1$) established in
  \Cref{lem:cont-quad-func-general}, we have
  \begin{align}
    \label{eq:fejer-remainder-part-nuX}
    \lim_{N\to\infty} \cQ_{\nu_X}(F_N\star
    \bx,F_N\star
    \by) = \cQ_{\nu_X}(\bx,\by)\;.
  \end{align}   
  Next, using~(\ref{eq:fejer1}), we have  
  \begin{align}
    \nonumber
    \cQ_{I_n^X}(F_N\star
    \bx,F_N\star
    \by)&=\sum_{\ell,\ell'=-N}^N\alpha_{\ell}(N)\,\alpha_{\ell'}(N)\,c_{\ell}(\bx)\,\cQ_{I_n^X}\lr{\mathbf{e}_{\ell},\mathbf{e}_{\ell'}}\,c_{\ell'}(\by)^\adjoint\\
    \label{eq:fejer-part}    &=\sum_{\ell,\ell'=-N}^N\alpha_{\ell}(N)\,\alpha_{\ell'}(N)\,c_\ell(\bx)\;\tilde{\Gamma}_n(\ell-\ell')\,c_{\ell'}(\by)^\adjoint\;,
  \end{align}
  where $\tilde{\Gamma}_n$ denotes the empirical covariance defined as
  in~(\ref{eq:emp-cov-centered}), but with $X_n^c$ replaced by $X$,
  that is,
  $$
  \tilde{\Gamma}_n(s-t)=\int
  I_n^{X}(\lambda)\,\rme^{\rmi\,(s-t)\,\lambda}\;\lebtore(\rmd\lambda)=\frac1n\sum_{\stackrel{1\leq k,k'\leq
      n}{k-k'=(s-t)}}
  X_{k}\otimes X_{k'}\;.    
  $$
  In particular, we have for any $\ell,\ell'\in\lrcb{-N,\dots,N}$,
  $$
  c_\ell(\bx)\;\tilde{\Gamma}_n(\ell-\ell')\,c_{\ell'}(\by)^\adjoint
  = \frac1n\sum_{1\vee(1+\ell-\ell')\leq k\leq n\wedge(n+\ell-\ell')}c_\ell(\bx)\,X_k\,X_{k-(\ell-\ell')}^\adjoint\,c_{\ell'}(\by)^\adjoint\,
  $$
  By~\ref{item:ergo-assump}, with the Birkhoff ergodic theorem, we
  get, for any $\ell,\ell'\in\lrcb{-N,\dots,N}$, $\as$,
  \begin{align*}
    \lim_{n\to\infty}
    c_\ell(\bx)\;\tilde{\Gamma}_n(\ell-\ell')\,c_{\ell'}(\by)^\adjoint
    &
      =\PE\lrb{c_\ell(\bx)\,X_{\ell-\ell'}\,X_0^\adjoint\,c_{\ell'}(\by)^\adjoint}\\
    &=c_\ell(\bx)\Cov{X_{\ell-\ell'}}{X_0}c_{\ell'}(\by)^\adjoint\\
    &=c_\ell(\bx)\cQ_{\nu_X}\lr{\mathbf{e}_{\ell},\mathbf{e}_{\ell'}}\,c_{\ell'}(\by)^\adjoint \;.
  \end{align*}
  where we used that $X$ is centered and that $\nu_X$ is the spectral
  operator measure of $X$.  From~(\ref{eq:fejer-part}),
  using~(\ref{eq:fejer1}) again, we get that, $\as$, for any $N\geq1$,
  $$
  \lim_{n\to\infty}\cQ_{I_n^X}(F_N\star
    \bx,F_N\star
    \by) =\cQ_{\nu_X}(F_N\star
    \bx,F_N\star
    \by)  \;.
$$
This, with~(\ref{eq:fejer-remainder-part}) and~(\ref{eq:fejer-remainder-part-nuX}), concludes Step~2.

  \noindent\textbf{Step~3}. From Step~1, there exists $\Omega'\in\cF$
  with probability 1 such that on $\Omega'$, any sequence valued in
  $\set{\tilde{\cQ}_{I_n^X}^{(L,R)}}{n\geq1}$ admits a subsequence
  uniformly converging in $\cC\lr{\Theta,\cS_1(\cH_0)}$.  To
  obtain~(\ref{eq:unif-periodo-mapping-convergence-noncentered}), we will exhibit $\Omega''\subset\Omega'$
  with probability one such that, on $\Omega''$,
  $\tilde\cQ_{\nu_X}^{(L,R)}$ is the only possible accumulation point
  of the sequence $\lr{\tilde{\cQ}_{I_n^X}^{(L,R)}}_{n\geq1}$. Let
  $E_0$ be a countable linearly dense subset of $\cG_0$ and let
  $(\theta_j)_{j\in\nset}$ be a dense sequence in $\Theta$, which
  exists since $\Theta$ is compact. Then, from Step~2, we have,
  $\as$,
  $$
\forall  j\in\nset\,,\forall x,y\in E_0\,,\;\lim_{n\to\nset}x^\adjoint\tilde{\cQ}_{I_n^X}^{(L,R)}\,y=
  x^\adjoint\,\tilde{\cQ}_{\nu_X}^{(L,R)}\,y\;.
  $$
  We can thus take $\Omega''\subset\Omega'$ with probability one, on
  which the previous display holds. Let $\omega\in\Omega''$ and take
  an accumulation point $\tilde{\cQ}_\infty$ of
  $\lr{\tilde{\cQ}_{I_n^{X(\omega)}}^{(L,R)}}_{n\geq1}$ in
  $\cC\lr{\Theta,\cS_1(\cG_0)}$.  Then, for all $j\in\nset$, using the
  previous display and the fact that $\tilde{\cQ}_\infty$ must also be
  an accumulation point for the weak operator topology, we get that,
  for all $x,y\in E_0$,
  $x^\adjoint\tilde{\cQ}_\infty(\theta_j)
  y=x^\adjoint\tilde{\cQ}_{\nu^{X}}^{(L,R)}(\theta_j)y$, which implies
  $\tilde{\cQ}_\infty(\theta_j)=\tilde{\cQ}_{\nu^{X}}^{(L,R)}(\theta_j)$. Since
  $\tilde{\cQ}_\infty$ and $\tilde{\cQ}_{\nu^{X}}^{(L,R)}$ are
  continuous on $\Theta$, and $(\theta_j)_{j\in\nset}$ is dense in
  $\Theta$, we get that $\tilde{\cQ}_\infty$ and
  $\tilde{\cQ}_{\nu^{X}}^{(L,R)}$ coincide, which concludes the proof.
\end{proof}
\subsubsection{Proof of \Cref{thm:periodo-func-fidi}}
We can now prove \Cref{thm:periodo-func-fidi} as a direct application
of \Cref{thm:periodo-func-general}. 
\begin{proof}[\bf Proof of \Cref{thm:periodo-func-fidi}]
  \Cref{thm:periodo-func-fidi} directly follows from
  \Cref{thm:periodo-func-general}, if we can prove that, $\as$, there
  exists $B$, a compact subset of $\cS_1(\cG_0)$, such that 
  $\tilde{\cQ}_{I_n^X}^{(L,R)}(\theta)\in B$ for all $n\geq1$ and
  $\theta\in\Theta$. Because $\cG_0$ is finite-dimensional, so is
  $\cS_1(\cG_0)$, and we only need to show that, $\as$, there exists $C>0$ such that
  $\norm{\tilde{\cQ}_{I_n^X}^{(L,R)}(\theta)}_1\leq C$ for all $n\geq1$ and
  $\theta\in\Theta$. By \Cref{lem:cont-quad-func-general}, this
  follows from the fact that, $\as$, there exists $r>0$ such that
  $\norm{I_n^X}_1\leq r$ for all $n\geq1$, which has already been used
  in the proof of \Cref{thm:periodo-func-general} and is a consequence
  of \Cref{lem:periodo-bounded-general} with
  \ref{item:sp_density-assump} in the case $\cH_0=\cI_0$. This
  concludes the proof.
\end{proof}
\subsubsection{Proof of \Cref{thm:periodo-func}}
The proof of \Cref{thm:periodo-func} essentially follows the same path
as that of \Cref{thm:periodo-func-fidi}. However, in the
infinite-dimensional case, we will need an additional result
(\Cref{prop:compactS1ballinS1}) to prove the assumption involving the
set $B$ in \Cref{thm:periodo-func-general}.  The result relies on the
space $\cH_0^s$ introduced in~\Cref{sec:est-main-assumpt-main}. In
this section, we will make extensive use of partial isometries as (see
\cite[Definition~ 3.8]{conwaycourseope}). We recall that a partial
isometry $U$ on the Hilbert space $\cH_0$ onto another Hilbert space
$\cG_0$ is a bounded operator which is an isometry on
$(\ker(U))^\perp$. The subspaces $(\ker(U))^\perp$ and $\range(U)$ are
respectively called the \emph{initial space} and \emph{final space} of
$U$. We recall that, if $U$ is a partial isometry, then $U^\adjoint U$
and $U U^\adjoint$ are the orthogonal projections onto the initial and
the final space of $U$ respectively.

Let us start with the following lemma, whose proof is straightforward,
but which contains some important definitions 
\begin{lemma}
  \label{lem:JsLemma}
  Let $\cH_0$ be a separable Hilbert space and let
  $(\phi_k)_{k\in\nset}$ be an orthonormal sequence in $\cH_0$.
  Let
  $s=(s_k)_{k\in\nset}\in[1,\infty)^\nset$ and define
  $\lr{\cH_0^s,\pscal{\cdot}{\cdot}_{\cH_0^s}}$ by~(\ref{eq:defHOs})
  and~(\ref{eq:pscalHOs}). Denote by $\mathrm{J}_s$ the continuous
  $\cH_0^s\hookrightarrow\cH_0$-inclusion map defined on $\cH_0^s$
  onto $\cH_0$ by $x\mapsto x$. Further denote by $U_s$ the partial
  isometry with initial space
  $\cspanarg[\cH_0]{\phi_k,k\in\nset}$ and final space $\cH_0^s$ such that,
  for all $k\in\nset$, $U_s\phi_k=s_k^{-1}\,\phi_k$. Finally,
  let us set $\mathbb{J}_s=\mathrm{J}_s\,U_s\in\cL_b(\cH_0)$. Then, for all
  $x\in\cH_0$, we have 
\begin{equation}
    \label{eq:yy-sob-bound}
    \mathbb{J}_s=\sum_{k\in\nset} s_k^{-1}\;\phi_k\otimes\phi_k \;.
 \end{equation}
Moreover, suppose that $s$ is non-decreasing and going  to
$\infty$. Then, we have $\mathbb{J}_s\in\cS_{\infty}\lr{\cH_0}$. 
\end{lemma}
We now have the following result.
\begin{proposition}
  \label{prop:compactS1ballinS1}
  Let $\cH_0$ be a separable Hilbert space and let
  $(\phi_k)_{k\in\nset}$ be a an orthonormal sequence in $\cH_0$.
  Define $\mathbb{J}_s$ as in \Cref{lem:JsLemma} for
  $s=(s_k)_{k\in\nset}\in[1,\infty)^\nset$ non-decreasing and going to
  $\infty$.  Define
  $\mathbb{B}_1=\set{\aop\in\cS_1(\cH_0)}{\norm{\aop}_1\leq1\quad\text{and}\quad\lr{\mathbb{J}_s\,\aop}^\adjoint=\mathbb{J}_s\,\aop}$.
  Then the set $B_1=\set{\mathbb{J}_s\,\aop}{\aop\in\mathbb{B}_1}$ is
  compact in $\cS_1(\cH_0)$.
\end{proposition}
\begin{proof}
  Let $(\aop_n)_{n\in\nset}$ be a sequence valued in $B_1$ and let us
  prove that it admits a subsequence which converges in $B_1$. By
  definition, we can write, for all $n\in\nset$,
  $\aop_n=\mathbb{J}_s\,\tilde{\aop}_n$ with
  $\tilde{\aop}_n\in\mathbb{B}_1$. We use that $\cS_1(\cH_0)$ is
  isometric to the dual of the space $\cS_\infty(\cH_0)$ (see
  \cite[Theorem~19.1]{conwaycourseope}. Then, by the Banach-Alaoglu
  Theorem (see Theorem~3.1 in \cite[Chapter V]{conway1994course}), we
  get that the unit ball of $\cS_1(\cH_0)$ is compact for the
  weak-star topology, that is the topology generated by the family of
  semi-norms
  $\set{\aop\mapsto\abs{\tr(\cop\aop)}}{\cop\in
    \cS_\infty(\cH_0)}$. This implies that
  $(\tilde\aop_n)_{n\in\nset}$ admits a subsequence
  $(\tilde\aop_{a_n})_{n\in\nset}$ converging to an element
  $\tilde{\bop}$ in the unit ball of $\cS_1(\cH_0)$ in the
  sense of the weak-star topology, that is, for all
  $\cop\in \cS_\infty(\cH_0)$, we have
  $$
  \lim_{n\to\infty}\tr\lr{\cop\tilde{\aop}_{a_n}}=\tr\lr{\cop\tilde{\bop}}\;.
  $$
  Observe that for all $x,y\in\cH_0$, the operator
  $\cop = x \otimes y \mathbb{J}_s$ is a rank-one (hence compact)
  linear operator on $\cH_0$ onto $\cH_0$. The last display thus gives
  that $\tilde{\aop}_{a_n}$ converges to $\tilde{\bop}$
  in weak operator topology (that is, for all $x,y\in\cH_0$,
  $\pscal{\tilde{\aop}_{a_n}x}{y}_{\cH_0}$ converges to
  $\pscal{\bop\,x}{y}_{\cH_0}$). Since
  $\mathbb{J}_s\,\tilde{\aop}_{a_n}$ is hermitian for all $n$, we get
  that $\mathbb{J}_s\,\tilde{\bop}$ is
  hermitian as well and we finally get that $\tilde{\bop}$ must be in
  $\mathbb{B}_1$. (In fact we have shown that $\mathbb{B}_1$ is
  compact for the weak-star topology).

  Let us set $\bop = \mathbb{J}_s\tilde{\bop} \in B_1$ and for all
  $n\in\nset$, $\Delta_n=\aop_{a_n}-\bop=\mathbb{J}_s\tilde\Delta_n$
  with $\tilde{\Delta}_n = \tilde\aop_{a_n} - \tilde\bop$, and let us
  summarize our findings so far.  We already know that $\aop_{a_n}$
  and $\bop$ are in $B_1$ (hence are hermitian and so is $\Delta_n$),
  that $\norm{\tilde{\Delta}_n}_1\leq2$ and that
  $\lr{\tilde{\Delta}_n}_{n\to\infty}$ converges to zero in
  $\cS_1(\cH_0)$ for the weak-star topology, which also implies the
  convergence in weak operator topology. To conclude, we now proceed in
  two steps. First, we show that
  $(\Delta_n)_{n\in\nset}$ converges to $0$ for the strong operator
  topology. Second, we use the first step to show that $\lim_{n\to+\infty}\norm{\Delta_{n}}_1=0$.

\noindent\textbf{Step~1}.
Let $x\in\cH_0$, then, for all $n\in\nset$,
  using~(\ref{eq:yy-sob-bound}), we have
\begin{align*}
  \norm{\Delta_n\,x}_{\cH_0}^2=
                              \sum_{k\in\nset}\lrav{\pscal{\Delta_n\,x}{\phi_k}_{\cH_0}}^2=
                              \sum_{k\in\nset}s_k^{-2}\,\lrav{\pscal{\tilde\Delta_n\,x}{\phi_k}_{\cH_0}}^2\;.
\end{align*}
Since $\lr{\tilde\Delta_n}_{n\in\nset}$ converges to 0 for the weak operator topology,  we have, for all $m\geq1$,
$$
\lim_{n\to\infty}
\sum_{k=0}^{m-1}s_k^{-2}\,\lrav{\pscal{\tilde\Delta_n\,x}{\phi_k}_{\cH_0}}^2=0\;.
$$
On the other hand, for all $m,n\geq1$, using the fact that
$\norm{\tilde\Delta_n}_\infty\leq\norm{\tilde\Delta_n}_1\leq 2$ and
that $s$ is non-decreasing, we get that
\begin{align*}
  \sum_{k=m}^\infty s_k^{-2}\,\lrav{\pscal{\tilde\Delta_n\,x}{\phi_k}_{\cH_0}}^2
 & \leq s_m^{-2}\,\sum_{k=0}^\infty\lrav{\pscal{\tilde\Delta_n\,x}{\phi_k}_{\cH_0}}^2
  \\
  &=    s_m^{-2}\,\norm{\tilde\Delta_n\,x}_{\cH_0^s}^2 \leq 4\,s_m^{-2}\,\norm{x}_{\cH_0}^2\;,
\end{align*}
hence converges to 0 independently of $n$ as $m\to\infty$, by
assumption on $s$. With the
two previous displays, we conclude that $\lr{\Delta_n\,x}_{n\in\nset}$
converges to 0 in $\cH_0$.  Hence, $(\Delta_n)_{n\in\nset}$ converges
to $0$ for the strong operator topology.

\noindent\textbf{Step~2}. Let $n\in\nset$. Since $\tilde{\Delta}_n\in\cS_1(\cH_0)$ and
$\mathbb{J}_s\in\cL_b(\cH_0)$, we have
$\Delta_n\in\cS_1(\cH_0)$. Consider the polar decomposition of
$\Delta_n$, that is $\Delta_n = V_n \abs{\Delta_n}$ where $V_n$ is a
partial isometry with initial space
$\ker(\Delta_n)^\perp = \crange(\abs{\Delta_n})$ and final space
$\crange(\Delta_n)$ (see \S 3.9 in \cite{conwaycourseope}). Since
$\Delta_n$ is autoadjoint we have
$\ker(\Delta_n)^\perp = \crange(\Delta_n)$ and we get that
$\crange(\abs{\Delta_n}) = \crange(\Delta_n) \subset
\crange(\mathbb{J}_s) = \cspan{\phi_k, k \in \nset}$. Hence
\begin{equation}\label{eq:trace-norm-delta}
\norm{\Delta_n} = \tr(\abs{\Delta_n}) = \sum_{k\in\nset}
\pscal{\abs{\Delta_n}\phi_k}{\phi_k}_{\cH_0} = \sum_{k\in\nset}
\pscal{\Delta_n\phi_k}{V_n\phi_k}_{\cH_0} \;,
\end{equation}
where the last equality comes from the fact that 
$\abs{\Delta_n} = V_n^\adjoint \Delta_n$.

Now, note that for all $m\geq 1$,
$$
  \sum_{k=0}^{m-1}
\lrav{  \pscal{\Delta_n\phi_k}{V_n\phi_k}_{\cH_0}}
  \leq   \sum_{k=0}^{m-1} \norm{\Delta_n\phi_k}_{\cH_0}\;,
$$
which converges to zero by Step~1.
Thus,
  \begin{equation}\label{eq:trace-m}
\text{for all $m\geq1$,}\;\lim_{n\to\infty} \sum_{k=0}^{m-1}
\pscal{\Delta_n\phi_k}{V_n\phi_k}_{\cH_0} = 0\;.
\end{equation}
On the other hand, using the fact that $\Delta_n$ is hermitian and (\ref{eq:yy-sob-bound}),
we have, for all $n,k\in\nset$,
\begin{align*}
\pscal{\Delta_n\phi_k}{V_n\phi_k}_{\cH_0}
=\pscal{\phi_k}{\Delta_nV_n\phi_k}_{\cH_0}
=\pscal{\phi_k}{\mathbb{J}_s
  \tilde\Delta_n V_n\phi_k}_{\cH_0}
&=
                                         s_{k}^{-1}\,\pscal{\phi_k}{\tilde\Delta_nV_n\phi_k}_{\cH_0} \\
&= s_{k}^{-1}\, \pscal{\tilde\Delta_n \phi_k}{V_n\phi_k}_{\cH_0}\;.
\end{align*}
It follows that, for all $m\geq 1$,
$$
\sum_{k\geq m}
\lrav{ \pscal{\Delta_n\phi_k}{V_n\phi_k}_{\cH_0}}
\leq  s_m^{-1}\,  \sum_{k\in\nset}\lrav{  \pscal{
    \tilde\Delta_n\phi_k}{V_n\phi_k}_{\cH_0} }
\leq s_m^{-1}\, \norm{\tilde\Delta_n}_1 \;,
$$
where we used \cite[Corollary~18.12]{conwaycourseope}. Since
$\norm{\tilde\Delta_n}_1\leq1$ and $s^{-1}_m$ converges to 0, we obtain
that
$$
\lim_{m\to\infty}\sup_{n\in\nset}\sum_{k\geq m}
\lrav{ \pscal{\Delta_n\phi_k}{V_n\phi_k}_{\cH_0} }=0\;.
$$
This with \eqref{eq:trace-norm-delta} and \eqref{eq:trace-m} concludes the second and final step.
\end{proof}
We can now prove \Cref{thm:periodo-func}.
\begin{proof}[\bf Proof of \Cref{thm:periodo-func}]
  By the polarization formula we can write
  $\tilde{\cQ}_{I_n^{X_n^c}}^{(L,R)}$ as a linear combination of
  $\tilde{\cQ}_{I_n^{X_n^c}}^{(W,W)}$ with $W$ in
  $\lrcb{L+R,L-R,L+\rmi R,L-\rmi R}$. The same formula holds for
  expressing $\tilde{\cQ}_{\nu_X}^{(L,R)}$ using
  $\tilde{\cQ}_{\nu_X}^{(W,W)}$ with the same $W$'s. Hence, to obtain
  the claimed result, it suffices to show that, for all $W\in\lrcb{L+R,L-R,L+\rmi R,L-\rmi R}$, we have
  \begin{equation}
    \label{eq:unif-periodo-mapping-convergence-D}
\lim_{n\to\infty}\tilde{\cQ}_{I_n^{X_n^c}}^{(W,W)}=\tilde{\cQ}_{\nu_X}^{(W,W)}\quad\text{uniformly
  in}\quad\cC\lr{\Theta,\cS_1\lr{\cH_0}}\;,\quad\as
\end{equation}
So, take $W\in\lrcb{L+R,L-R,L+\rmi R,L-\rmi R}$, and let us
show~(\ref{eq:unif-periodo-mapping-convergence-D}).

By assumption on $L$ and $R$, we have
$W\in\cC\lr{\Theta\times\tore,\cL_b(\cH_0)}$ and, using Condition~\ref{item:LR-sob-assump},
$W_s\in\cC\lr{\Theta\times\tore,\cL_b(\cH_0^s)}$ where
$W_s(\theta,\lambda)=W(\theta,\lambda)_{|\cH_0^s}$ for all
$(\theta,\lambda)\in\Theta\times\tore$. By Condition~\ref{item:X-sob-assump}, we can
apply \Cref{lem:periodo-bounded-general} with $\cI_0=\cH_0^s$, and obtain
that, $\as$,
\begin{equation}
  \label{eq:InX-L11-compact}
  \text{there exists $r_1>0$ such that}\quad
  \set{I_n^{X_n^c}}{n\geq1}\subset B_{1,1}(r_1,\cH_0,\cH_0^s)\;.
\end{equation}
Now let us define $\mathrm{J}_s$, $U_s$ and $\mathbb{J}_s$ as in
  \Cref{lem:JsLemma}. Applying these definitions carefully and using
  the fact that $UU^\adjoint$ is the orthogonal projection onto
  $\range(U_s) = \cH_0^s$, it straightforwardly 
  yields that,  for all
  $(\theta,\lambda)\in\Theta\times\tore$, and $x\in\cH_0^s$,
  $$
  \mathbb{J}_s\,U_s^\adjoint\, W_s(\theta,\lambda)\, x=\mathrm{J}_s\,W_s(\theta,\lambda)\, x=W(\theta,\lambda)\, x\;.
  $$
  Thus, for all $n\in\nset$, $\as$, $I_n^X\in L^{1,1}\lr{\cH_0,\cH_0^s}$,
  and for all $\theta\in\Theta$,
  \begin{equation}
    \label{eq:QI_n-Hos}
\tilde{\cQ}_{I_n^X}^{(W,W)}(\theta)=\mathbb{J}_s\,\tilde{\cQ}_{I_n^X}^{(U_s^\adjoint\,
  W_s,W)}(\theta)\;.    
  \end{equation}
  Observe that $W_s\in\cC\lr{\Theta\times\tore,\cL_b(\cH_0^s)}$
  immediately implies that
  $U_s^\adjoint\,W_s(\theta,\cdot)\in\mathbb{F}_b\lr{\cH_0^s,\cH_0}$. Thus,
  for all $\theta$, we can apply \Cref{lem:cont-quad-func-general}
  with $\cI_0=\cH_0^s$ and $\cG_0=\cJ_0=\cH_0$,
  $L=U_s^\adjoint\,W_s(\theta,\cdot)$, $R=W$, $g=I_n^X$ and
  $\mu=\lebtore$, which, with~(\ref{eq:InX-L11-compact}), gives us
  that, $\as$: for all $\theta\in\Theta$, there exists $r>0$ such that
  $\norm{\tilde{\cQ}_{I_n^X}^{(U_s^\adjoint\,W_s,W)}(\theta)}_1\leq
  r$, where, here, $\norm{\cdot}_1$ denotes the trace-class norm in
  $\cS_1\lr{\cH_0}$. With~(\ref{eq:QI_n-Hos}) and the fact that
  $\tilde{\cQ}_{I_n^X}^{(W,W)}(\theta)$ is an hermitian operator for all $\theta$,
  we get that, $\as$: for all $\theta\in\Theta$, there exists $r>0$
  such that $\tilde{\cQ}_{I_n^X}^{(W,W)}(\theta)$ belongs to the set
  $B=r\,B_1$, with $B_1$ defined as in
  \Cref{prop:compactS1ballinS1}. Since $B$ is compact by
  \Cref{prop:compactS1ballinS1}, we can apply
  \Cref{thm:periodo-func-general} with $L=R:=W$ and we
  obtain~(\ref{eq:unif-periodo-mapping-convergence-D}), which
  concludes the proof.
\end{proof}

\subsection{Proof of \Cref{thm:1}}
\label{sec:proof-thm:1}

Hereafter we prove~(\ref{eq:cv-as-emp-cov}) by relying on
\Cref{thm:periodo-func}. Of course the uniform convergence in $\theta$
does not matter in this case and it could be proven by relying
directly on \Cref{prop:compactS1ballinS1}, by mimicking the argument
used in the proof of \Cref{thm:periodo-func} to show that
$\lr{\hat{\Gamma}_n(h)}_{n\in \nset}$ remains in a compact subset of
$\cS_1(\cH_0)$, $\as$ (since the convergence of $\hat{\Gamma}_n(h)$ to
$\Gamma(h)=\cov{X_h}{X_0}$ in weak operator topology is obvious under
the assumptions of \Cref{thm:1}).

\begin{proof}[\bf Proof of \Cref{thm:1}]
  Let $h$ be a given lag in
  $\zset$. The claimed result follows from \Cref{thm:periodo-func}
  with $L(\theta,\lambda)=\rme^{\rmi\,h\,\lambda}\Id_{\cH_0}$ and
  $R(\theta,\lambda)=\Id_{\cH_0}$, with $\theta$ being an arbitrary
  point and $\Theta$ the corresponding singleton. Indeed, with these
  definitions, the convergence~(\ref{eq:cv-as-emp-cov}) can be
  rewritten as~(\ref{eq:unif-periodo-mapping-convergence}). Thus, we
  only to show that $(X_t)_{t\in\zset}$ and the above defined $L$ and
  $R$ satisfy the assumptions of
  \Cref{thm:periodo-func}. Obviously~\ref{item:ergo-assump}
  and~\ref{item:sp_density-assump} are satisfied. As
  for Conditions~\ref{item:s-sob-assump} and~\ref{item:X-sob-assump},
  they follow from ~\ref{item:ergo-assump}
  and~\ref{item:sp_density-assump} by \Cref{lem:A1A2implyA4}, for well
  chosen $s$ and $\lr{\phi_k}_{k\in\nset}$.  Now, defining $L_s$ and
  $R_s$ as in \Cref{thm:periodo-func} with $L$ and $R$ as above, we
  get $L_s(\theta,\lambda)=\rme^{\rmi\,h\,\lambda}\Id_{\cH_0^s}$ and
  $R_s(\theta,\lambda)=\Id_{\cH_0^s}$, and $L,R,L_s$ and $R_s$
  obviously satisfy the assumptions of the theorem (including
  Condition~\ref{item:LR-sob-assump}). Hence \Cref{thm:periodo-func}
  applies and the proof is finished.
\end{proof}

\subsection{Proofs of \Cref{sec:examples-est-application}}
\subsubsection{Preliminary results}
\begin{lemma}
  \label{lem:lem-predictor}
  Let $\cH_0$ be a separable Hilbert space and $p,q$ be two
  non-negative integers.  Let $D\in\cL_b(\cH_0)$,
  $\mapol\in\mathcal{P}_q^\dagger(\cH_0)$ and
  $\arpol\in\mathcal{P}_p^\dagger(\cH_0)$. Define
  $\fiarmapred:\cset\setminus[1,\infty)\to\cL_b(\cH_0)$
  by~(\ref{eq:best-pred-fiarma-def}). Then,
  $\mapol^{-1}:z\mapsto\lrb{\mapol(z)}^{-1}$, $\arpol^{-1}:z\mapsto\lrb{\arpol(z)}^{-1}$ and
  $\fiarmapred$ are all holomorphic functions on the open unit disk $\unitdisk$ onto
  $\cL_b(\cH_0)$. Moreover $\mapol^{-1}$ and $\arpol^{-1}$ are
  continuous on the closed unit disk $\overline{\unitdisk}$ and
  $\fiarmapred$ is continuous over  $\overline{\unitdisk}\setminus\{1\}$. 
\end{lemma}
\begin{proof}
  By holomorphic in \Cref{lem:lem-predictor} we mean
  the same as in \cite[Definition~1.1.1]{gohberg-leiterer09}.  
  Since $\arpol$ and $\mapol$ are polynomials they are holomorphic in
  $\cset$. Because we assumed that they belong to
  $\mathcal{P}_p^\dagger(\cH_0)$, we further have that they
  are valued in the space of invertible operators on
  $\overline{\unitdisk}$ and so the inverted polynomials
  $z\mapsto\lrb{\mapol(z)}^{-1}$ and $z\mapsto\lrb{\arpol(z)}^{-1}$
  are holomorphic on $\unitdisk$ and continuous on
  $\overline{\unitdisk}$. Since the principal logarithmic function is
  holomorphic on $\cset\setminus[1,\infty)$, so is
  $z\mapsto(1-z)^D$. The result follows.
\end{proof}

\begin{lemma}
  \label{lem:lem-predictor2}
  Let $\cH_0$ be a separable Hilbert space and $p,q$ be two
  non-negative integers.  Let $D\in\cN(\cH_0)$
  $\mapol\in\mathcal{P}_q^\dagger(\cH_0)$ and
  $\arpol\in\mathcal{P}_p(\cH_0)$. Define
  $\fiarmapred:\cset\setminus[1,\infty)\to\cL_b(\cH_0)$
  by~(\ref{eq:best-pred-fiarma-def}). Let moreover
  $X\in\processtransfer{\fracintoptransfer{D}}(\Omega,\cF,\PP)$ and
  denote by $g_X$ its spectral operator density with respect to a
  non-negative measure $\mu$ on $\lr{\tore,\btore}$. 
  We assume that
  $\mu$ has no mass at the origin, $\mu\lr{\lrcb{0}}=0$.
  Then, we have
  \begin{align}
    \label{eq:uniform-cv-small-freq1}
&    \int_{-\pi/3}^{\pi/3}\;\sup_{0\leq\rho\leq1}\norm{\Psi(\rho,\lambda)g_X(\lambda)\Psi^\adjoint(\rho,\lambda)}_1\;\mu(\rmd\lambda)<\infty\;,
  \end{align}
  in the two following cases:
  \begin{align}
    \label{eq:uniform-cv-small-freq11}
  \text{if}\quad  & \Psi(\rho,\lambda):=\lr{1-\rho\rme^{-\rmi\lambda}}^{-D}-\lr{1-\rme^{-\rmi\lambda}}^{-D}\;.\\
  \label{eq:uniform-cv-small-freq2}
  \text{or if}\quad  & \Psi(\rho,\lambda):=\lr{\fiarmapred\lr{\rho\rme^{-\rmi\lambda}}-\fiarmapred\lr{\rme^{-\rmi\lambda}}}\,\lr{1-\rme^{-\rmi\lambda}}^{-D}\;.
  \end{align}
\end{lemma}
\begin{proof}
  We first  consider $\Psi(\rho,\lambda)$ as
  in~(\ref{eq:uniform-cv-small-freq11}). In this case, for
  all $0\leq\rho\leq1$ and $\lambda\neq0$,
  $$
  \norm{\Psi(\rho,\lambda)g_X(\lambda)\Psi^\adjoint(\rho,\lambda)}_1=\norm{\Psi(\rho,\lambda)\lr{g_X(\lambda)}^{1/2}}_2^2\leq
  4\,\sup_{0\leq\rho\leq1}\norm{\lr{1-\rho\rme^{-\rmi\lambda}}^{-D}\,\lr{g_X(\lambda)}^{1/2}}_2^2\;.
  $$
  Thus, the
  bound~(\ref{eq:uniform-cv-small-freq1}) is implied by
  \begin{align}
    \label{eq:uniform-cv-small-freq11bis}
  \int_{-\pi/3}^{\pi/3}\sup_{0\leq\rho\leq1}\norm{\lr{1-\rho\rme^{-\rmi\lambda}}^{-D}\,\lr{g_X(\lambda)}^{1/2}}_2^2\;\mu(\rmd\lambda)<\infty\;,
  \end{align}
  Since $D$ is assumed to be normal, we can proceed as in the proof of
  \Cref{thm:cns-fi-operator-functional-fiarma} and use its singular value function
  $\mathrm{d}$ on $\cG_0:=L^2(\Vset, \Vsigma, \xi)$ and decomposition
  operator $U$ so
  that~(\ref{eq:uniform-cv-small-freq11bis}) is implied by
  \begin{equation*}
    \int _{\Vset^2\times(-\pi/3,\pi/3)}\;\sup_{0\leq\rho\leq1} \left|\lr{1-\rho\rme^{-\rmi\lambda}}^{-\mathrm{d}(v)}\,\kernelope{h}(v,v';\lambda)\right|^2\;\xi(\rmd
v)\xi(\rmd v')\mu(\rmd\lambda)<\infty\;,
\end{equation*}
where $\kernelope{h}$ denote the $\tore$-joint kernel
  function of $h$ such that $
  h(\lambda)[h(\lambda)]^\adjoint=U\,g_X(\lambda)\,U^\adjoint
  \quad\text{for }\mu\text{-}\mae\;\lambda\in\tore\;. 
  $
  Now, by~\Cref{lem:integral-1minusexp-fiarma-sup-rho}, using that $\mathrm{d}$ is
  bounded over $\Vset$, the previous condition holds if
  \begin{equation}
    \label{eq:uniform-cv-small-freq11-CS}
    \int _{\Vset^2\times(-\pi/3,\pi/3)}\; |\lambda|^{-2\Re_+(\mathrm{d}(v))} \,\lrav{\kernelope{h}(v,v';\lambda)}^2\;\xi(\rmd
v)\xi(\rmd v')\mu(\rmd\lambda)<\infty\;.
\end{equation}
On the other hand, we assumed that
$X\in\processtransfer{\fracintoptransfer{D}}(\Omega,\cF,\PP)$, which,
by \Cref{thm:cns-fi-operator-functional-fiarma} is equivalent to
have~(\ref{eq:cns-fi-operator-functional-new-fiarma-pos}), which
implies~(\ref{eq:uniform-cv-small-freq11-CS}) (since
$\mu(\{0\})=0$). We thus proved~(\ref{eq:uniform-cv-small-freq1}) for
$\Psi(\rho,\lambda)$ as in~(\ref{eq:uniform-cv-small-freq11}).

  We now consider $\Psi(\rho,\lambda)$ as in~(\ref{eq:uniform-cv-small-freq2}).
  By~(\ref{eq:best-pred-fiarma-def}), for all
  $\lambda\in\tore\setminus\{0\}$ and $0\leq\rho\leq1$, the
  difference $\fiarmapred\lr{\rho\rme^{-\rmi\lambda}}-\fiarmapred\lr{\rme^{-\rmi\lambda}}$ can be written as
  $$ 
\lrb{\mapol(\rme^{-\rmi\lambda})}^{-1}\,\arpol(\rme^{-\rmi\lambda})\,(1-\rme^{-\rmi\lambda})^{D}  -\lrb{\mapol(\rho\rme^{-\rmi\lambda})}^{-1}\,\arpol(\rho\rme^{-\rmi\lambda})\,(1-\rho\rme^{-\rmi\lambda})^{D}\;.
  $$
  By \Cref{lem:lem-predictor}, $\mapol^{-1}$ and $\arpol$ are
  continuous on $\overline{\unitdisk}$, hence have bounded operator
  norms over  $\overline{\unitdisk}$. Thus, to get~(\ref{eq:uniform-cv-small-freq1}), it is thus sufficient to
  show that
  \begin{align}
    \label{eq:uniform-cv-small-freqCS1}
&    \int_{-\pi/3}^{\pi/3}\;\sup_{0\leq\rho\leq1}\norm{\tilde{\Psi}(\rho,\lambda)g_X(\lambda)\tilde{\Psi}^{\adjoint}(\rho,\lambda)}_1\;\mu(\rmd\lambda)<\infty\;,\\
    \label{eq:uniform-cv-small-freqCS2}
  \text{where}\quad  & \tilde{\Psi}(\rho,\lambda):=\lr{1-\rho\rme^{-\rmi\lambda}}^{D}\,\lr{1-\rme^{-\rmi\lambda}}^{-D}\;.
  \end{align}
  Since $D$ is assumed to be normal, we can proceed as in the proof of
  \Cref{thm:cns-fi-operator-functional-fiarma} and use its singular value function
  $\mathrm{d}$ on $\cG_0:=L^2(\Vset, \Vsigma, \xi)$ and decomposition
  operator $U$ so
  that~(\ref{eq:uniform-cv-small-freqCS1}) is implied by
  \begin{equation}
    \label{eq:uniform-cv-small-freq-CSbis}
    \int _{\Vset^2\times(-\pi/3,\pi/3)}\;\sup_{0\leq\rho\leq1} \left|\lr{\frac{1-\rho\rme^{-\rmi\lambda}}{1-\rme^{-\rmi\lambda}}}^{\mathrm{d}(v)}\kernelope{h}(v,v';\lambda)\right|^2\;\xi(\rmd
v)\xi(\rmd v')\mu(\rmd\lambda)<\infty\;.
  \end{equation}
  Now, by~\Cref{lem:integral-1minusexp-fiarma}
  and~\Cref{lem:integral-1minusexp-fiarma-sup-rho}, we have for all $z\in\cset$
  and $\lambda\in[-\pi/3,\pi/3]\setminus\{0\}$,
  $$
  \sup_{0\leq\rho\leq1}\lrav{\lr{\frac{1-\rho\rme^{-\rmi\lambda}}{1-\rme^{-\rmi\lambda}}}^{z}}
\leq \left(2\pi/(3\sqrt{3})\right)^{\Re_-(z)}\,\left(\pi/2\right)^{\Re_-(-z)} \;|\lambda|^{-\Re_+(z)}\; \rme^{\pi
   \abs{\Im(z)}}\,  \;.
 $$
 Plugging this bound in~(\ref{eq:uniform-cv-small-freq-CSbis}) and
 using that $\mathrm{d}$ is bounded on $\Vset$, we get
 that~(\ref{eq:uniform-cv-small-freq-CSbis}) is again implied
 by~(\ref{eq:uniform-cv-small-freq11-CS}). Hence we
 proved~(\ref{eq:uniform-cv-small-freq1}) in the case given by~(\ref{eq:uniform-cv-small-freq2}).
\end{proof}
In the following, for any positive integer $p$, we endow the set of
polynomials of degree less than of equal to $p$ (or any
of its subsets  $\mathcal{P}_p(\cH_0)$, $\mathcal{P}_p^\dagger(\cH_0)$ or
$\mathcal{P}_p^\ast(\cH_0)$) with the max of the $\norm{\cdot}_\infty$-norms
of its $\cL_b(\cH_0)$ coefficients. For instance if
$\bbpsi(z):=\sum_{k=0}^p A_k z^k$ we denote
$$
\norm{\bbpsi}=\max\set{\norm{A_k}_\infty}{k=1,\dots,p} \;.
$$
It is straightforward to show that the convergence of a
$\mathcal{P}_p(\cH_0)$-valued sequence in the obtained Banach space is
equivalent to the uniform convergence of this sequence in
$\cC\lr{\unitcircle,\cL_b(\cH_0)}$.  In particular the continuity of
$(\theta,\lambda)\mapsto\arpol_\theta\lr{\rme^{-\rmi\lambda}}$ and
$(\theta,\lambda)\mapsto\mapol_\theta\lr{\rme^{-\rmi\lambda}}$ on
$\Theta\times\tore$ onto $\cL_b(\cH_0)$ assumed
in~\ref{item:fiarma-mode} imply the convergence of
$\theta\mapsto\arpol_\theta$ and $\theta\mapsto\mapol_\theta$ on
$\Theta$ onto $\mathcal{P}_p(\cH_0)$.

We have the following lemma.
\begin{lemma}
  \label{lem:prediction3}
  Let $\cH_0$ be a separable Hilbert space and $p,q$ be two
  non-negative integers. Let $\fiarmapred$ be defined 
  by~(\ref{eq:best-pred-fiarma-def}). Then, for all
  $(\mapol,\arpol,D)\in\mathcal{P}_q^\dagger(\cH_0)\times\mathcal{P}_p(\cH_0)\times\cL_b(\cH_0)$
  and all $k\in\nset$,
\begin{equation}
  \label{eq:best-pred-fiarma-coeff-def}
  \fiarmapredcoef{k} :=  \frac1{2\rmi\pi}\int_{z\in\cset,|z|=\rho}
  \fiarmapred\lr{z}\,z^{-k-1}\;\rmd z
\end{equation} 
is well defined as $\cL_b(\cH_0)$-valued  Bochner integral for any $\rho\in(0,1)$ and does not
depend on $\rho$. Moreover, the following assertions
hold.
\begin{enumerate}[label=(\roman*)]
\item \label{item:lem:prediction31} For all
  $(\mapol,\arpol,D)\in\mathcal{P}_q^\dagger(\cH_0)\times\mathcal{P}_p(\cH_0)\times\cL_b(\cH_0)$
  and $z\in\unitdisk$,
\begin{equation}
  \label{eq:best-pred-fiarma-coeff-insideU}
  \fiarmapred\lr{z} = \sum_{k=1}^\infty  \fiarmapredcoef{k}\, z^k \;\;.
\end{equation}
\item \label{item:lem:prediction32} For any $k\geq1$,
$(\mapol,\arpol,D)\mapsto \fiarmapredcoef{k}$ is continuous on
$\mathcal{P}_q^\dagger(\cH_0)\times\mathcal{P}_p(\cH_0)\times\cL_b(\cH_0)$.
\item \label{item:lem:prediction33}  For all
  $(\mapol,\arpol,D)\in\mathcal{P}_q^\dagger(\cH_0)\times\mathcal{P}_p(\cH_0)\times\cN^\dagger(\cH_0)$
  and $z\in\overline{\unitdisk}\setminus\lrcb{1}$,
\begin{equation}
  \label{eq:best-pred-fiarma-coeff-in-closure-U}
  \fiarmapred\lr{z} = \sum_{k=1}^\infty  \fiarmapredcoef{k}\, z^k \;\;.
\end{equation}
\item \label{item:lem:prediction34} For any compact subset
$K\subset\mathcal{P}_q^\dagger(\cH_0)\times\mathcal{P}_p(\cH_0)\times\lr{\lrcb{0}\cup\cN^\dagger(\cH_0)}$,
we have
   \begin{equation}
     \label{eq:unfi-bound-abs-cv}
     \sum_{k=1}^\infty\sup_{(\mapol,\arpol,D)\in
       K}\norm{\fiarmapredcoef{k}}_{\infty}<\infty \; .
   \end{equation}
\end{enumerate}
 \end{lemma}
 \begin{proof}
   Recall that $\fiarmapred$ is defined for all $(\mapol,\arpol,D)\in\mathcal{P}_q^\dagger(\cH_0)\times\mathcal{P}_p(\cH_0)\times\cL_b(\cH_0)$
   by~(\ref{eq:best-pred-fiarma-def}) as a holomorphic function defined on
   $\cset\setminus[1,\infty)$ onto the Banach space
   $\cL_b(\cH_0)$.  Using \Cref{lem:lem-predictor},
   and \cite[Theorem~1.8.5]{gohberg-leiterer09}, we can expand
   $\fiarmapred$ as a power series on the open unit disk $\unitdisk$, 
   that is~(\ref{eq:best-pred-fiarma-coeff-insideU}) holds with $\fiarmapredcoef{k}$ (well) defined
   by~(\ref{eq:best-pred-fiarma-coeff-def}) for any $\rho\in(0,1)$. Note that the sum in the right-hand
side of~(\ref{eq:best-pred-fiarma-coeff-insideU}) starts at $k=1$ because $\fiarmapredcoef{0}=0$ since, by the Cauchy Formula (see
\cite[Theorem~1.5.1]{gohberg-leiterer09}), we have
$\fiarmapredcoef{0}=\fiarmapred(0)$, which is the null operator
following~(\ref{eq:best-pred-fiarma-def}) and
$\arpol(0)=\arpol(0)=1^D=\Id_{\cH_0}$. 

   Assertion~\ref{item:lem:prediction32} follows from~(\ref{eq:best-pred-fiarma-coeff-def}) by dominated
   convergence, since $(z,\mapol,\arpol,D)\mapsto\fiarmapred\lr{z}$ is
   continuous on
   $\unitdisk\times\mathcal{P}_q^\dagger(\cH_0)\times\mathcal{P}_p(\cH_0)\times\cL_b(\cH_0)$
   by~(\ref{eq:best-pred-fiarma-def}) and \Cref{lem:lem-predictor}.

Let us now prove Assertions~\ref{item:lem:prediction33}
and~\ref{item:lem:prediction34}. In fact,
Assertions~\ref{item:lem:prediction32}
and~\ref{item:lem:prediction34} imply that the right-hand side
of~(\ref{eq:best-pred-fiarma-coeff-in-closure-U}) is continuous on
$\overline{\unitdisk}$. Since, by \Cref{lem:lem-predictor}, the left-hand side is  continuous on
$\overline{\unitdisk}\setminus\lrcb{1}$, with
Assertion~\ref{item:lem:prediction31}, we conclude that
we get both  Assertions~\ref{item:lem:prediction33}
and~\ref{item:lem:prediction34} by proving the bound~(\ref{eq:unfi-bound-abs-cv}). Let $K$ be a
compact subset of
$\mathcal{P}_q^\dagger(\cH_0)\times\mathcal{P}_p(\cH_0)\times\{0\}$
or of
$\mathcal{P}_q^\dagger(\cH_0)\times\mathcal{P}_p(\cH_0)\times\cN^\dagger(\cH_0)$.
Then, there exists $r>1$ such that for all $(\mapol,\arpol,D)\in
   K$, $\mapol$ does not vanish over the open disk of radius
   $r$. It follows that  for all $(\mapol,\arpol,D)\in
   K$, $z\mapsto\lrb{\mapol(z)}^{-1}\,\arpol(z)$
 is a power series with a radius of convergence at least equal to
 $r$ and that, for any $\rho_1\in(r^{-1},1)$, there exists $c_1>0$ 
 such that, for all  $(\mapol,\arpol,D)\in K$ and  $z\in
 r\,\unitdisk$,
 \begin{equation}
   \label{eq:arma-power-series}
\lrb{\mapol(z)}^{-1}\,   \arpol(z) = \Id_{\cH_0}+\sum_{k=1}^\infty \cop_k\;
 z^k\quad\text{with}\quad \norm{\cop_k}_\infty \leq c_1\,\rho_1^k
\end{equation}
If
$K\subset\mathcal{P}_q^\dagger(\cH_0)\times\mathcal{P}_p(\cH_0)\times\{0\}$,
we have $\fiarmapredcoef{k}=-\cop_k$ and~(\ref{eq:unfi-bound-abs-cv}) follows.

We now consider the case where
$K\subset\mathcal{P}_q^\dagger(\cH_0)\times\mathcal{P}_p(\cH_0)\times\cN^\dagger(\cH_0)$.
Let $\sigma$ be an upper bound of $\norm{D}_\infty$ and $\rho$ a
lower bound of the smallest eigenvalue of $(D+D^\adjoint)/2$
over $(\mapol,\arpol,D)\in K$. Then, we have $\varrho>0$ by definition of
$\cN^\dagger(\cH_0)$ in~(\ref{eq:Dond-positive-long-memory}) and since
$K$ is compact.  Then, setting $N=D+\Id_{\cH_0}$,
Condition~(\ref{eq:Ncond-fiarma-infsup}) holds with
$\varrho=\rho+1>1$. 
and $\varsigma=\sigma+1$. Thus, by \Cref{lem:dev-1minusz-fiarma}, we
have, for all $z\in\overline{\unitdisk}\setminus\lrcb{1}$ and
$D\in\cN^\dagger(\cH_0)$,
   $$
   (1-z)^{D} = \sum_{k=0}^\infty   \lr{ \bop (k+1)^{-\Id_{\cH_0}-D}+ \aop^{\ast}_k} z^k \;,
   $$
   where $\norm{\bop}_{\infty}\leq C$ and
   $\norm{\aop^{\ast}_k}\leq C k^{-2-\rho}$ for some constant $C>0$ only
   depending on $\sigma$. Moreover, note that
   $\norm{(k+1)^{-\Id_{\cH_0}-D}}_{\infty}\leq(k+1)^{-1-\rho}$.
   By~(\ref{eq:best-pred-fiarma-def})  and
setting $\cop_{0}=\Id_{\cH_0}$, we obtain, for all
   $z\in\overline{\unitdisk}\setminus\{1\}$,
   $$
   \fiarmapred\lr{z} = - \sum_{\ell=1}^\infty  \lr{ \sum_{k=0}^\ell \cop_{\ell-k}\; \lr{
       \bop\, (k+1)^{-\Id_{\cH_0}-D}+ \aop^{\ast}_k}} z^\ell\;.
   $$
   By~(\ref{eq:best-pred-fiarma-def}), we thus have, for all $k\geq1$,
   $$
   \fiarmapredcoef{k} = -  \lr{ \sum_{k=0}^\ell \cop_{\ell-k}\; \lr{
       \bop \, (k+1)^{-\Id_{\cH_0}-D}+ \aop^{\ast}_k}}\;,
   $$
   and the previous bounds further yield
   $$
\sup_{(\mapol,\arpol,D)\in K}   \norm{\fiarmapredcoef{k}}_{\infty}\leq 2\,c_1\,C\,\sum_{k=0}^\ell \rho_1^{\ell-k}(k+1)^{-1-\rho}\;.
$$
Therefore, we obtain~(\ref{eq:unfi-bound-abs-cv}).
\end{proof}
Finally, the following lemma will be useful.
\begin{lemma}
  \label{lem:von-neuman}
    Let $\cH_0$ be a separable Hilbert space and
    $X=\lr{X_t}_{t\in\zset}$ be an ergodic stationary process defined
    on $(\Omega,\cF,\PP)$ valued
    in $\cH_0$ such that $\PEarg{\norm{X}_{\cH_0}^2}<\infty$. Then we
  have
$$
\lim_{n\to\infty}\PEarg{\norm{\frac1n\sum_{k=1}^nX_k
    -\PEarg{X}}_{\cH_0}^2} = 0\;.
$$
\end{lemma}
\begin{proof}
  The assumptions imply that $X$ is weakly stationary. Moreover, the
  space of shift-invariant elements in $\cH^X$ (where the shift is
  defined by $X_t\mapsto X_{t-1}$) is the null set, otherwise $X$
  would not be ergodic : take $V$ shift-invariant in the sense of
  $\cH^X$, then, for all $x\in\cH_0$, $\pscal{V}{x}_{\cH_0}$ is
  shift-invariant in the $\as[\PP^X]$ sense. See also
  \Cref{lem:l2-ergodic-zero-mass} for a more precise statement in the
  centered case (which can be assumed here without loss of
  generality). Therefore, the result simply follows from the von
  Neumann ergodic theorem (see \cite[Theorem~II.11]{reed_simon80}).
\end{proof}

\subsubsection{Proof of main results}
\begin{proof}[\bf Proof of \Cref{thm:fiarma-predictor}]
  By \Cref{def:fiarma-fiarma}, denoting by $\Sigma$ the covarariance
  operator of $Z$, $Y$ admits the spectral density
  $$
  f_Y(\lambda)= \lr{1-\rme^{-\rmi\lambda}}^{-D}[\arpol(\rme^{-\rmi\lambda})]^{-1}\mapol(\rme^{-\rmi\lambda})\Sigma\lr{\lr{1-\rme^{-\rmi\lambda}}^{-D}[\arpol(\rme^{-\rmi\lambda})]^{-1}\mapol(\rme^{-\rmi\lambda})}^\adjoint
  $$
  with respect to the normalized Lebesgue measure $\lebtore$. 
  Let $t\in\zset$. We first show that the right-hand side
  of~(\ref{eq:best-pred-fiarma}) is well defined, that is, that
  $\lambda\mapsto\rme^{\rmi\lambda t} \,\fiarmapred[\mapol,\arpol,D]\lr{\rme^{-\rmi\lambda}}$
  belongs to $\hat{\cH}^Y$. Since this mapping is continuous on
  $\tore\setminus\{0\}$ onto
  $\cL_b(\cH_0)$, by \Cref{prop:carac-processtransfer-discrete}, this
  is equivalent to have
  $$
  \int\norm{\fiarmapred[\mapol,\arpol,D]\lr{\rme^{-\rmi\lambda}}f_Y(\lambda)\fiarmapred[\mapol,\arpol,D]\lr{\rme^{-\rmi\lambda}}^\adjoint}_1\;\lebtore(\rmd\lambda)<\infty\;.
  $$
  By definition of $f_Y$, we thus have to show that
  \begin{equation}
    \label{eq:cond-predictor-wel-defined}
  \int\norm{\fiarmapred[\mapol,\arpol,D]\lr{\rme^{-\rmi\lambda}}\lr{1-\rme^{-\rmi\lambda}}^{-D}[\arpol(\rme^{-\rmi\lambda})]^{-1}\mapol(\rme^{-\rmi\lambda})\Sigma^{1/2}}_2^2\;\lebtore(\rmd\lambda)<\infty\;.    
  \end{equation}
  By definition of $\fiarmapred$
  in~(\ref{eq:best-pred-fiarma-def}), we have, for all $\lambda\in\tore\setminus\{0\}$,
  $$
  \lr{\Id_{\cH_0}-\fiarmapred[\mapol,\arpol,D]\lr{\rme^{-\rmi\lambda}}}\lr{1-\rme^{-\rmi\lambda}}^{-D}[\arpol(\rme^{-\rmi\lambda})]^{-1}\mapol(\rme^{-\rmi\lambda})
  = \Id_{\cH_0}\;.
  $$
  We thus get that, for all $\lambda\in\tore\setminus\{0\}$,
  $\fiarmapred[\mapol,\arpol,D]\lr{\rme^{-\rmi\lambda}}\lr{1-\rme^{-\rmi\lambda}}^{-D}[\arpol(\rme^{-\rmi\lambda})]^{-1}\mapol(\rme^{-\rmi\lambda})\Sigma^{1/2}$
  can be expressed as
  $$
  \lr{1-\rme^{-\rmi\lambda}}^{-D}[\arpol(\rme^{-\rmi\lambda})]^{-1}\mapol(\rme^{-\rmi\lambda})\Sigma^{1/2}-  \Sigma^{1/2}\;.
  $$
  Thus, since $\norm{\Sigma^{1/2}}_2^2=\norm{\Sigma}_1<\infty$,
  Condition~(\ref{eq:cond-predictor-wel-defined}) is implied by
  $$
  \int\norm{\lr{1-\rme^{-\rmi\lambda}}^{-D}[\arpol(\rme^{-\rmi\lambda})]^{-1}\mapol(\rme^{-\rmi\lambda})\Sigma^{1/2}}_2^2\;\lebtore(\rmd\lambda)<\infty\;.
  $$
  On the other hand, the square $\cS_2$-norm inside the previous
  integral is equal to $\norm{f_Y}_1$ which is $\lebtore$-integrable
  as a spectral density. We thus get that the right-hand side
  of~(\ref{eq:best-pred-fiarma}) is well defined and, in the
  following, we denote
  $$
  \hat{Y}_t=\int \rme^{\rmi\lambda t} \;
  \fiarmapred[\mapol,\arpol,D]\lr{\rme^{-\rmi\lambda}}\;
  \hat{Y}(\rmd\lambda)\;.
  $$
  It only remains to show the two following assertions.
  \begin{enumerate}[label=(\roman*)]
  \item \label{item:fiarma-predictor1} We have
    $\hat{Y}_t\in\cH^Y_{t-1}$.
  \item \label{item:fiarma-predictor2} We have $Y_t-\hat{Y}_t\perp\cH^Y_{t-1}$.
  \end{enumerate}
  Let us first prove
  Assertion~\ref{item:fiarma-predictor1}. By
  Assertion~\ref{item:lem:prediction31} in \Cref{lem:prediction3},
  we immediately have that, for all $\rho\in(0,1)$,
  $$
  \hat{Y}_t^{(\rho)}=\int \rme^{\rmi\lambda t} \;
  \fiarmapred[\mapol,\arpol,D]\lr{\rho\rme^{-\rmi\lambda}}\;
  \hat{Y}(\rmd\lambda)\in\cH^Y_{t-1}\;.
  $$
  To conclude Assertion~\ref{item:fiarma-predictor1}, it is thus
  sufficient to show that
  \begin{equation}
    \label{eq:Yrho-approx}
    \lim_{\rho\uparrow1}\hat{Y}_t^{(\rho)}={Y}_t
    \quad\text{in $\cM(\Omega,\cF, \cH_0, \PP)$.}
  \end{equation}
  By the Kolmogorov Gramian isometric theorem (see
  \cite[\Cref{thm:kolmo-isomorphism-thm}]{surveyREFnew}), setting
  $$
\Psi(\rho,\lambda):=\fiarmapred[\mapol,\arpol,D]\lr{\rho\rme^{-\rmi\lambda}}-\fiarmapred[\mapol,\arpol,D]\lr{\rme^{-\rmi\lambda}}\;,
  $$
we can write, for any $\eta\in(0,\pi)$ and $\rho\in(0,1)$,
  \begin{align*}
    \PEarg{\norm{\hat{Y}^{(\rho)}_t-\hat{Y}_t}_{\cH_0}^2}=&\int\norm{\Psi(\rho,\lambda)f_Y(\lambda)\Psi^\adjoint(\rho,\lambda)}_1\; \lebtore(\rmd\lambda)& \\
    \leq&\lr{\int\norm{f_Y}_1\rmd\lebtore}\,\sup_{\lambda\in\tore\setminus[-\eta,\eta]}\norm{\Psi(\rho,\lambda)}_{\infty}^2\\
    &+
    \int_{-\eta}^\eta\sup_{0\leq\rho\leq1}\norm{\Psi(\rho,\lambda)f_Y(\lambda)\Psi^\adjoint(\rho,\lambda)}_1\;\lebtore(\rmd\lambda)\;.    
  \end{align*}
  By \Cref{lem:lem-predictor} the first term of this bound tends to
  zero as $\rho\uparrow1$ for all $\eta\in(0,\pi)$. It
  thus only remain to check that the
  second term can be made arbitrarily small as $\eta\downarrow0$,
  which follows if  there exists $\eta>0$ such that
  \begin{align*}
&
                   \int_{-\eta}^\eta\sup_{0\leq\rho\leq1}\norm{\Psi(\rho,\lambda)f_Y(\lambda)\Psi^\adjoint(\rho,\lambda)}_1\;\lebtore(\rmd\lambda)<\infty\;.
  \end{align*}
  By definition of $f_Y$, setting $X$ as the  ARMA($p,q$) process defined
    by
    \begin{equation}
      \label{eq:X-intermediaire-prediction-theorem}
    \hat
    X(\rmd\lambda)=[\arpol(\rme^{-\rmi\lambda})]^{-1}\mapol(\rme^{-\rmi\lambda})\hat
    Z(\rmd\lambda)\;,
  \end{equation}
denoting by $f_X$ the density of $X$ with respect
    to $\lebtore$, the previous condition is equivalent to      
  \begin{align*}
&  \int_{-\eta}^\eta\sup_{0\leq\rho\leq1}\norm{\tilde{\Psi}(\rho,\lambda)f_X(\lambda)\tilde{\Psi}^\adjoint(\rho,\lambda)}_1\;\lebtore(\rmd\lambda)<\infty\;,\\
  \text{where}\quad  & \tilde{\Psi}(\rho,\lambda):={\Psi}(\rho,\lambda)\,\lr{1-\rme^{-\rmi\lambda}}^{-D}=\lr{\fiarmapred[\mapol,\arpol,D]\lr{\rho\rme^{-\rmi\lambda}}-\fiarmapred[\mapol,\arpol,D]\lr{\rme^{-\rmi\lambda}}}\,\lr{1-\rme^{-\rmi\lambda}}^{-D}\;.
  \end{align*}
  Since
  $X\in\processtransfer{\fracintoptransfer{D}}(\Omega,\cF,\PP)$ by
  definition of $Y$, \Cref{lem:lem-predictor2} gives us that the
  latter condition holds, which concludes the proof
  of~(\ref{eq:Yrho-approx}) and thus of
  Assertion~\ref{item:fiarma-predictor1}.

  We now prove  Assertion~\ref{item:fiarma-predictor2}. By definition
  of $\hat{Y}_t$,~(\ref{eq:best-pred-fiarma-def}) and~(\ref{eq:fiarma-def-spectral-fiarma}), we have
  \begin{align*}
  Y_t-\hat{Y}_t&=\int \rme^{\rmi\lambda t} \;
\lr{\Id_{\cH_0}-  \fiarmapred[\mapol,\arpol,D]\lr{\rme^{-\rmi\lambda}}}\;\hat{Y}(\rmd\lambda)\\
    &=\int  \rme^{\rmi\lambda t} \; \lrb{\mapol(\rme^{-\rmi\lambda})}^{-1}\,\arpol(\rme^{-\rmi\lambda})\,(1-\rme^{-\rmi\lambda})^{D}\; \hat{Y}(\rmd\lambda)\\
   &=\int  \rme^{\rmi\lambda t} \; \hat{Z}(\rmd\lambda)= Z_t\;.    
  \end{align*}
  Since $Z$ is a white noise we have $Z_t\bot\cH^Z_{t-1}$. To prove
  Assertion~\ref{item:fiarma-predictor2}, it thus only remains to show
  that $\cH^Y_{s}$ is included in $\cH^Z_{s}$ for all
  $s\in\zset$. Since we assumed
  $\arpol\in\mathcal{P}_p^\dagger(\cH_0)$, we have that $\arpol^{-1}$
  is holomorphic in an open domain that includes
  $\overline{\unitdisk}$ and thus can be written as a power series on
  the unit circle. It follows that the ARMA process $X$ defined
  by~(\ref{eq:X-intermediaire-prediction-theorem}) satisfies
  $\cH^X_s\subset\cH^Z_{s}$ for all $s\in\zset$. To conclude, we now
  prove that $\cH^Y_{s}\subset\cH^X_{s}$ for all $s\in\zset$.  Observe
  that, for all $s\in\zset$,
  $$
  Y_s=\int \rme^{\rmi\lambda s}\lr{1-\rme^{-\rmi\lambda}}^{-D}\;\hat{X}(\rmd\lambda)\;.
  $$
  Using that $z\mapsto\lr{1-z}^{-D}$ is holomorphic on
  $\cset\setminus[1,\infty)$, it can be expanded as a power series on
  $\unitdisk$ and it follows that, for
  all $s\in\zset$ and $\rho\in(0,1)$,
  $$
  Y^{(\rho)}_s=\int \rme^{\rmi\lambda s}
  \lr{1-\rho\rme^{-\rmi\lambda}}^{-D}\;\hat{X}(\rmd\lambda)\in \cH^X_s\;.
  $$
  Using the same trick as for the proof of
  Assertion~\ref{item:fiarma-predictor1}, we write, for any $\eta\in(0,\pi)$
  and $\rho\in(0,1)$,
  \begin{align*}
    \PEarg{\norm{Y_s-Y^{(\rho)}_s}_{\cH_0}^2}\leq&\lr{\int\norm{f_X}_1\rmd\lebtore}\,\sup_{\lambda\in\tore\setminus[-\eta,\eta]}\norm{\lr{1-\rho\rme^{-\rmi\lambda}}^{-D}-\lr{1-\rme^{-\rmi\lambda}}^{-D}}_{\infty}^2\\
    &+
    \int_{-\eta}^\eta\sup_{0\leq\rho\leq1}\norm{\Psi(\rho,\lambda)f_X(\lambda)\Psi^\adjoint(\rho,\lambda)}_1\;\lebtore(\rmd\lambda)\;,\\
  \text{where, here,}\quad  & \Psi(\rho,\lambda):=\lr{1-\rho\rme^{-\rmi\lambda}}^{-D}-\lr{1-\rme^{-\rmi\lambda}}^{-D}\;.    
  \end{align*}
  By continuity of $z\mapsto\lr{1-z}^{-D}$ on
  $\cset\setminus[1,\infty)$, the first term in the upper bound tends
  to zero for all $\eta$, while the second bound can be made
  arbitrarily small as $\eta\downarrow0$ as a consequence of
  \Cref{lem:lem-predictor2}. We thus get the claim
  $\cH^Y_{s}\subset\cH^X_{s}$ and the proof is concluded.
\end{proof}

\begin{proof}[\bf Proof of \Cref{prop:firama-pred-def}]
All the assertions follow straightforwardly from \Cref{lem:prediction3} and other
previous results. Observe indeed that~(\ref{eq:fiarmapred-def2})
immediately follows from the fact that $\fop^\dagger(\theta,\lambda) =
\Id_{\cH_0}  -
\fiarmapred[{D_{\theta},\arpol_\theta,\mapol_\theta}](\rme^{-\rmi\lambda})$.
As for the other claimed facts, here are some
details. 

Moreover, the continuity of
$\lr{D,\arpol,\mapol}\mapsto\fiarmapredcoef{k}$ (and thus
with~\ref{item:fiarma-mode}, that of
$\theta\mapsto\fiarmapredcoef[\theta]{k}$, and the
bound~(\ref{eq:unfi-bound-abs-cv}) gives us that
$\theta\mapsto\hat{Y}_t(\theta)$ is continuous on $\Theta$ onto
$\cH^Y_{t-1}$, hence the expectation in the right-hand side
of~(\ref{eq:fiarmacol-pred-best-error}) is continuous in $\theta$,
which shows that the inf is attained on a compact subset of
$\Theta$.

When the best predictor $\hat{Y}^\ast_t$ is well defined for one
$t\in\zset$, by weak stationarity of $Y$ it must be well defined for
all $t$. Then, for all $t\in\zset$ and all $\theta\in\Theta^\ast_Y$,
we have $\hat{Y}^\ast_t=\hat{Y}_t(\theta)$, $\as$. Of course, since
since $\zset$ is countable, we can exchange the $\as$ with the ``for
all $t\in\zset$''.  We can also exchange the $\as$ with the ``for all
$\theta\in\Theta^\ast_Y$'' as claimed in assertion~\ref{item:prop:firama-pred-def6} of the proposition because the
arguments above also give that, $\as$,
$\theta\mapsto\hat{Y}_t(\theta)$ is continuous on $\Theta$ onto
$\cH_0$, and consequently, is uniquely defined by its value on a dense
countable subset of (the compact set) $\Theta^\ast_Y$.

Finally, in the well-specified case, we apply
\Cref{thm:fiarma-predictor} and notice that, by
Assertion~(\ref{eq:best-pred-fiarma-coeff-in-closure-U}) of
\Cref{lem:prediction3}, $\hat{Y}_t(\theta)$ is the right-hand side
of~(\ref{eq:best-pred-fiarma}) with $(\mapol,\arpol,D)$ replaced by
$\lr{D_{\theta},\arpol_\theta,\mapol_\theta}$, which
gives~(\ref{eq:fiarmacol-pred-best-error-well-specified2}) and,
consequently,~(\ref{eq:fiarmacol-pred-best-error-well-specified}).
\end{proof}

\begin{proof}[\bf Proof of \Cref{thm:fiarma-predictor-esimation}]
  By \Cref{lem:prediction3}, we have that
  $\fop^\dagger\in \cC(\Theta\times\tore,\cL_b(\cH_0))$ with
  $\fop^\dagger$ defined in \eqref{eq:fiarmapred-def2}. Similarly, using
  Assertion~\ref{item:LR-sob-assump-est} in
  \Cref{thm:fiarma-predictor-esimation}, we also have
  $\fop_{s}^\dagger\in\cC(\Theta\times\tore,\cL_b(\cH_0^s))$,
  with
  $\fop_{s}^\dagger (\theta,\lambda):=\fop^\dagger(\theta,\lambda)_{|\cH_0^s}$.
  Hence Assertions~\ref{item:s-sob-assump},~\ref{item:X-sob-assump}
  and~\ref{item:LR-sob-assump} of \Cref{thm:periodo-func} hold with
  $L=R=\fop^\dagger$. Applying this theorem, with the fact that the trace is
  continuous on $\cS_1(\cH_0)$, we obtain that, $\as$,
  $$
\lim_{n\to\infty}  \Lambda_n =
\tr\lr{\tilde{\cQ}^{(\fop^\dagger,\fop^\dagger)}_{\nu_X}}\quad\text{uniformly in $\cC\lr{\Theta,\rset}$.}
$$
Now, observe that by~(\ref{eq:fiarmapred-def2}) and~(\ref{eq:fiarmacol-pred}), for all $\theta\in\Theta$,
$$
\tr\lr{\tilde{\cQ}^{(\fop^\dagger,\fop^\dagger)}_{\nu_X}(\theta)}=\PEarg{\norm{Y_0-\hat{Y}_0(\theta)}_{\cH_0}^2}\;.
$$
Therefore, with~(\ref{eq:theta-hat-def}) and~(\ref{eq:fiarmacol-pred-best-error}), we can write that, $\as$, 
\begin{align*}
  \limsup_{n\to\infty}\Lambda_n(\hat{\theta}_n)
  \leq \limsup_{n\to\infty}\inf_{\theta\in\Theta}\Lambda_n(\theta) 
  = \mathbb{E}^2\lr{Y,\fiarmacol} \;.
\end{align*}
Thus, $\as$, all accumulation points $\theta$ of the $\Theta$-valued sequence
$(\hat{\theta}_n)_{n\geq1}$ satisfy
$$
\PEarg{\norm{Y_0-\hat{Y}_0(\theta)}_{\cH_0}^2} \leq
\mathbb{E}^2\lr{Y,\fiarmacol}\;,\quad\text{which implies}\quad\theta\in\Theta^\ast_Y\;.
$$
Since $\Theta$ is compact, we
obtain~(\ref{eq:consitency-not-well-specified}).

We now prove~(\ref{eq:best-fiarmapred-not-well-specified-error}).
Let us define, for all $\theta\in\Theta$,
\begin{equation}
  \label{eq:pred-finite-obs-with-emp-mean-Y}
\mathbb{E}^{2}_{Y,\infty}\lr{\theta}= \PEarg{\norm{Y_0-\lr{\sum_{k=1}^\infty\fiarmapredcoef[\theta]{k}\;Y_{-k}}}_{\cH_0}^2}  \;.  
\end{equation}
By~(\ref{eq:pred-finite-obs-with-emp-mean}) and Minkowski's
inequality, we have, for all $(m,\theta)\in\cH_0\times\Theta$ and $n\geq1$,
\begin{align*}
  \lrav{\mathbb{E}_{Y,\infty}\lr{\theta}-\mathbb{E}_{X,n}\lr{m,\theta}}
  &\leq
    \lrav{\mathbb{E}_{Y,\infty}\lr{\theta}-\mathbb{E}_{Y,n}\lr{0,\theta}}+\norm{\lr{\Id_{\cH_0}-\sum_{k=1}^\infty\fiarmapredcoef[\theta]{k}}\,\lr{\PEarg{X_0}-m}}_{\cH_0}
    \;.
\end{align*}
We further have, for all $m\in\cH_0$ and
$n\geq1$,
\begin{align*}
 \sup_{\theta\in\Theta} \lrav{\mathbb{E}_{Y,\infty}\lr{\theta}-\mathbb{E}_{Y,n}\lr{0,\theta}}  &\leq
\sup_{\theta\in\Theta} \lr{\PEarg{\norm{\sum_{k=n+1}^\infty\fiarmapredcoef[\theta]{k}\;Y_{-k}}_{\cH_0}^2}}^{1/2}\\
  \\
&\leq
\lr{\sum_{k=n+1}^\infty\sup_{\theta\in\Theta}\norm{\fiarmapredcoef[\theta]{k}}_{\infty}}\;\lr{\PEarg{\norm{Y_{0}}_{\cH_0}^2}}^{1/2}\;,    
\end{align*}
which, by  \Cref{lem:prediction3}, converges to zero as $n\to\infty$.

On the other hand, we have, for all $m\in\cH_0$,
$$
\sup_{\theta\in\Theta}\norm{\lr{\Id_{\cH_0}-\sum_{k=1}^\infty\fiarmapredcoef[\theta]{k}}\,\lr{\PEarg{X_0}-m}}_{\cH_0}
\leq\lr{1+\sum_{k=1}^\infty\sup_{\theta\in\Theta}\norm{\fiarmapredcoef[\theta]{k}}_{\infty}}\;\lr{\PEarg{X_0}-m}\;,
$$
which, by \Cref{lem:prediction3} again, converges to zero as
$m\to\PEarg{X_0}$.
Consequently,~(\ref{eq:best-fiarmapred-not-well-specified-error})
follows if we can show that
\begin{align}
  \label{eq:eq:best-fiarmapred-not-well-specified-error1}
\lim_{n\to\infty} \mathbb{E}^2_{Y,\infty}(\hat\theta_n)= \mathbb{E}^2\lr{Y,\fiarmacol}\quad\as  \\
  \label{eq:eq:best-fiarmapred-not-well-specified-error2}
  \lim_{n\to\infty}\frac1n\sum_{k=1}^nX_k = \PEarg{X_0}\quad \as
\end{align}
\Cref{lem:as-mean-cv} gives
us~(\ref{eq:eq:best-fiarmapred-not-well-specified-error2}).
We now prove~(\ref{eq:eq:best-fiarmapred-not-well-specified-error1}).
Observe that, by~(\ref{eq:pred-finite-obs-with-emp-mean-Y}) and by
continuity of $\theta\mapsto\sum_k\fiarmapredcoef[\theta]{k}$ (see
\Cref{lem:prediction3}), we have that
$\theta\mapsto\mathbb{E}^2_{Y,\infty}(\theta)$ is continuous on
$\Theta$ onto $\rset_+$. Now, since $\Theta$ is compact, we have that, $\as$, $\lr{\ell_n}_{n\geq1}$
is a bounded sequence in $\rset_+$, where we set, for all $n\geq1$,
$\ell_n:=\mathbb{E}^{2}_{Y,\infty}(\hat\theta_n)$.
We now show that, $\as$, all accumulation points of this
sequence is in fact equal to the right-hand side
of~(\ref{eq:eq:best-fiarmapred-not-well-specified-error1}). 
This follows from the fact that, by compactness of $\Theta$, from any increasing sequence
$(n_j)_{j\in\nset}$ of positive integers, we can extract a subsequence
$(n'_j)_{j\in\nset}$ such that $\hat{\theta}_{n'_j}$
converges. Furthermore, by~(\ref{eq:consitency-not-well-specified}), $\as$,
the limit of this sequence must belong to $\Theta^\ast_Y$.  With
the continuity of $\theta\mapsto\mathbb{E}^2_{Y,\infty}(\theta)$
previously established, we conclude that, $\as$, all accumulation
points of  $\lr{\ell_{n}}_{n\geq1}$ is of the form
$\mathbb{E}^{2}_{Y,\infty}\lr{\theta}$ for some
$\theta\in\Theta^\ast_Y$, hence is equal to
$\mathbb{E}^2\lr{Y,\fiarmacol}$ by~(\ref{eq:fiarmacol-pred-best-error})
and definition of $\Theta^\ast_Y$ in~(\ref{eq:fiarmacol-pred-best-set}). 

Let us show the last assertion of the theorem. To this end, we suppose that $\hat{Y}_t^\ast$ is well defined and show
that~(\ref{eq:best-fiarmapred-not-well-specified-error-well-spec}) holds. We
first observe that, since $X^c_{n,k}=Y^c_{n,k}$,
$$
X_{n+1}-\hat{X}_{n+1,n}=Y_{n+1}-\sum_{k=1}^n\fiarmapredcoef[\hat{\theta}_n]{k}\;Y_{n+1-k}- \lr{\Id_{\cH_0}-\sum_{k=1}^n\fiarmapredcoef[\hat{\theta}_n]{k}}\,\lr{\frac1n\sum_{k=1}^nY_k}\;.
$$
Therefore, to
obtain~(\ref{eq:best-fiarmapred-not-well-specified-error-well-spec}), we only
need to show that
\begin{align}
  \label{eq:to-show-best-fiarmapred-not-well-specified-error1}
  \limsup_{n\to\infty}\PEarg{\norm{Y_{n+1}-\sum_{k=1}^n\fiarmapredcoef[\hat{\theta}_n]{k}\;Y_{n+1-k}}_{\cH_0}^2}\leq\mathbb{E}^2\lr{Y,\fiarmacol}\;,\\
  \label{eq:to-show-best-fiarmapred-not-well-specified-error2}
    \lim_{n\to\infty}\PEarg{\norm{\lr{\Id_{\cH_0}-\sum_{k=1}^n\fiarmapredcoef[\hat{\theta}_n]{k}}\,\lr{\frac1n\sum_{k=1}^nY_k}}_{\cH_0}^2} = 0 \;.
\end{align}
Let us start
with~(\ref{eq:to-show-best-fiarmapred-not-well-specified-error2}). By
\Cref{lem:prediction3}, we have
$$
\sup_{n\in\nset}\sup_{\theta\in\Theta}\norm{\Id_{\cH_0}-\sum_{k=1}^n\fiarmapredcoef[\theta]{k}}_{\infty}\leq 1+\sum_{k=1}^\infty\sup_{\theta\in\Theta}\norm{\fiarmapredcoef[\theta]{k}}_{\infty}<\infty\;.
$$
Then, we
get~(\ref{eq:to-show-best-fiarmapred-not-well-specified-error2}) by
applying \Cref{lem:von-neuman}.

We now prove~(\ref{eq:to-show-best-fiarmapred-not-well-specified-error1}).
For all $n\geq1$, the squared norm in the left-hand side's expectation
of~(\ref{eq:to-show-best-fiarmapred-not-well-specified-error1}) can be
written as
$$
\inf_{\theta\in\Theta^\ast_Y}\norm{Y_{n+1}-\sum_{k=1}^n\fiarmapredcoef[\theta]{k}\;Y_{n+1-k}+\sum_{k=1}^n\lr{\fiarmapredcoef[\theta]{k}-\fiarmapredcoef[\hat{\theta}_n]{k}}\;Y_{n+1-k}}_{\cH_0}^2\;.
$$
We thus have, for all $n\geq1$,
\begin{equation}
   \label{eq:best-asymp-erro-with--interm}
\PEarg{\norm{Y_{n+1}-\sum_{k=1}^n\fiarmapredcoef[\hat{\theta}_n]{k}\;Y_{n+1-k}}_{\cH_0}^2}
\leq \PEarg{\lr{A_n+B_n}^2}\;,
\end{equation}
where we set
\begin{align*}
  &A_n:=\inf_{\theta\in\Theta^\ast_Y}\norm{\sum_{k=1}^n\lr{\fiarmapredcoef[\theta]{k}-\fiarmapredcoef[\hat{\theta}_n]{k}}\;Y_{n+1-k}}_{\cH_0}\;,\\
  &B_n:=
\sup_{\theta\in\Theta^\ast_Y}\norm{Y_{n+1}-\sum_{k=1}^n\fiarmapredcoef[\theta]{k}\;Y_{n+1-k}}_{\cH_0}\;.
\end{align*}
We are going to show, successively that
\begin{align}
   \label{eq:best-asymp-erro-with--interm1}
  \lim_{n\to\infty}  \PEarg{A_n^2} = 0\;,\\
   \label{eq:best-asymp-erro-with--interm2}
  \lim_{n\to\infty}  \PEarg{B_n^2} = \mathbb{E}^2\lr{Y,\fiarmacol} \;.
\end{align}
These two facts with~(\ref{eq:best-asymp-erro-with--interm}) indeed
imply~(\ref{eq:to-show-best-fiarmapred-not-well-specified-error1}).
First observe that \Cref{lem:prediction3} straightforwardly yields
\begin{align}
\label{eq:to-show-best-fiarmapred-not-well-specified-error12}
\lim_{n\to\infty}\PEarg{\sup_{\theta\in\Theta}\norm{\sum_{k=n+1}^\infty\fiarmapredcoef[\theta]{k}\;Y_{n+1-k}}_{\cH_0}^2} =0\;.
\end{align}
Thus, to have~(\ref{eq:best-asymp-erro-with--interm1})
and~(\ref{eq:best-asymp-erro-with--interm2}), and by stationarity of
$Y$, we can use
\begin{align*}
  &A'_n:=\inf_{\theta\in\Theta^\ast_Y}\norm{\sum_{k=1}^\infty\lr{\fiarmapredcoef[\theta]{k}-\fiarmapredcoef[\hat{\theta}_n]{k}}\;Y_{-k}}_{\cH_0}\;,\\
  &B':=
\sup_{\theta\in\Theta^\ast_Y}\norm{Y_{0}-\sum_{k=1}^\infty\fiarmapredcoef[\theta]{k}\;Y_{-k}}_{\cH_0}\;,
\end{align*}
and prove instead
\begin{align}
   \label{eq:best-asymp-erro-with--interm1prim}
  \lim_{n\to\infty}  \PEarg{A_n^{\prime 2}} = 0\;,\\
   \label{eq:best-asymp-erro-with--interm2prim}
  \PEarg{B^{\prime 2}} = \mathbb{E}^2\lr{Y,\fiarmacol} \;.
\end{align}
To get Relation~(\ref{eq:best-asymp-erro-with--interm2prim}), we
observe that, with the assumption that the best
$\fiarmacol$-predictor is well defined, Assertion~\ref{item:prop:firama-pred-def6} in
\Cref{prop:firama-pred-def} and~(\ref{eq:fiarmacol-pred}) give that, $\as$,
for all $\theta\in\Theta^\ast_Y$,
$\hat{Y}_0(\theta)=\sum_{k=1}^\infty\fiarmapredcoef[\theta]{k}\;Y_{-k}=\hat{Y}_{0}^\ast$.
Hence, $\as$, $B'=\norm{Y_{0}-\hat{Y}_{0}^\ast}$. We thus have~(\ref{eq:best-asymp-erro-with--interm2prim}).

We conclude with the proof
of~(\ref{eq:best-asymp-erro-with--interm1prim}). Note that
$$
A'_n\leq2\,
\sup_{\theta\in\Theta^\ast_Y}\norm{\sum_{k=1}^\infty\fiarmapredcoef[\theta]{k}\;Y_{-k}}_{\cH_0}
\;.
$$
Note that the $L^2$-norm of this upper bound satisfies
$$
\lr{\PEarg{\sup_{\theta\in\Theta^\ast_Y}\norm{\sum_{k=1}^\infty\fiarmapredcoef[\theta]{k}\;Y_{-k}}_{\cH_0}^2}}^{1/2}\leq
\lr{\sum_{k=1}^\infty\sup_{\theta\in\Theta^\ast_Y}\norm{\fiarmapredcoef[\theta]{k}}_{\infty}}
\; \lr{\PEarg{\norm{Y_{0}}_{\cH_0}^2}}^{1/2}\;,
$$
which is finite by~\Cref{lem:prediction3}. Thus, we can apply the
dominated convergence theorem,
and~(\ref{eq:best-asymp-erro-with--interm1prim}) follows from
$$
\lim_{n\to\infty}A'_n = 0 \quad\as\;,
$$
which we now prove by contradiction. Suppose that, with positive
probability, we can find $\eta>0$ and an increasing sequence $(n_j)_{j\in\nset}$ of
integers such that $A'_{n_j}\geq\eta$ for all
$j$. Then, by~(\ref{eq:consitency-not-well-specified}) and since
$\Theta$ is compact, with positive
probability, there also exists a subsequence  $(n'_j)_{j\in\nset}$ 
integers such that $A'_{n'_j}\geq\eta$ for all $j$ and $\hat{\theta}_{n'_j}$
converges to some $\theta\in\Theta^\ast_Y$ as $j\to\infty$. By \Cref{lem:prediction3},
this latter fact implies that, for this $\theta$,
$$
\lim_{j\to\infty}
\norm{\sum_{k=1}^\infty\lr{\fiarmapredcoef[\theta]{k}-\fiarmapredcoef[\hat{\theta}_{n_j}]{k}}\;Y_{-k}}_{\cH_0}
=0\;.
$$
But since $\theta\in\Theta^\ast_Y$, this contradicts the assumption
that yielded $A'_{n'_j}\geq\eta>0$ for all $j$. This finishes the proof.  
\end{proof}

\appendix
  \section{Technical lemmas}
  We start with two lemmas on complex analysis. 
\begin{lemma}\label{lem:integral-1minusexp-fiarma}
  For all $z \in \cset$ and $\lambda\in[-\pi,\pi] \setminus\{0\}$, we
  have
  \begin{align}\label{eq:lem:integral-1minusexp1-fiarma}
\left(2/\pi\right)^{2\,\Re_+(z)}\;|\lambda|^{2\Re(z)}\; \rme^{-\pi
   \abs{\Im(z)}}  \leq \abs{(1 - \rme^{- \rmi \lambda})^{z}}^2\leq \left(\pi/2\right)^{2\Re_-(z)} \;|\lambda|^{2\Re(z)}\; \rme^{\pi
   \abs{\Im(z)}} \;,
  \end{align}
  where $\Re(z)=(z+\bar z)/2$, $\Re_{+}(z) =\max(\Re(z),0)$ and
  $\Re_{-}(z)= \max(-\Re(z),0)$.
\end{lemma}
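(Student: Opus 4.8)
The plan is to compute $\bigl|(1-\rme^{-\rmi\lambda})^z\bigr|^2$ exactly from the polar form of $1-\rme^{-\rmi\lambda}$ and then bound the two resulting factors separately; the whole proof reduces to two elementary real-variable inequalities.

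We may assume $\lambda\ne0$, since at $\lambda=0$ both sides reduce to expressions involving $0^{2\Re(z)}$. Write $z=a+\rmi b$ with $a=\Re(z)$, $b=\Im(z)$. From $1-\rme^{-\rmi\lambda}=\rme^{-\rmi\lambda/2}\,2\rmi\sin(\lambda/2)$ we read off the modulus $r:=\bigl|1-\rme^{-\rmi\lambda}\bigr|=2\bigl|\sin(\lambda/2)\bigr|>0$; moreover $\Re\bigl(1-\rme^{-\rmi\lambda}\bigr)=1-\cos\lambda>0$ for $\lambda\ne0$, so $1-\rme^{-\rmi\lambda}$ lies in the open right half-plane, hence off the branch cut $(-\infty,0]$ of the principal logarithm, and its argument $\theta$ satisfies $|\theta|<\pi/2$. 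Writing $\ln(1-\rme^{-\rmi\lambda})=\ln r+\rmi\theta$ and using $(1-\rme^{-\rmi\lambda})^z=\exp\bigl(z\ln(1-\rme^{-\rmi\lambda})\bigr)$, we get $\Re\bigl(z\ln(1-\rme^{-\rmi\lambda})\bigr)=a\ln r-b\theta$, so that
\[
\bigl|(1-\rme^{-\rmi\lambda})^z\bigr|^2=\rme^{2(a\ln r-b\theta)}=r^{2a}\,\rme^{-2b\theta}\;.
\]

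It remains to bound the two factors. Since $|\theta|<\pi/2$ we have $|2b\theta|\le\pi|b|$, whence $\rme^{-\pi|\Im(z)|}\le\rme^{-2b\theta}\le\rme^{\pi|\Im(z)|}$. For $r^{2a}=\bigl(2|\sin(\lambda/2)|\bigr)^{2a}$, apply the elementary bounds $\tfrac2\pi t\le\sin t\le t$ (valid for $t\in[0,\pi/2]$) with $t=|\lambda|/2$ to obtain $\tfrac2\pi|\lambda|\le r\le|\lambda|$, and then raise to the power $2a$, distinguishing $a\ge0$ from $a<0$; in the second case the inequalities reverse, which is precisely what turns $(2/\pi)^{2a}$ and $1$ into $(2/\pi)^{2[\Re(z)]_+}$ and $(\pi/2)^{2[\Re(z)]_-}$. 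This gives
\[
(2/\pi)^{2[\Re(z)]_+}\,|\lambda|^{2\Re(z)}\le r^{2\Re(z)}\le(\pi/2)^{2[\Re(z)]_-}\,|\lambda|^{2\Re(z)}\;.
\]
Multiplying this with the bounds on $\rme^{-2b\theta}$ yields precisely \eqref{eq:lem:integral-1minusexp1-fiarma}.

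No serious difficulty arises; the only points needing a little care are the sign distinction when raising $\tfrac2\pi|\lambda|\le r\le|\lambda|$ to the possibly negative power $2\Re(z)$, so that the positive and negative parts of $\Re(z)$ end up in the correct factors, and the (easy) verification that $1-\rme^{-\rmi\lambda}$ never meets the branch cut of the principal logarithm on $[-\pi,\pi]\setminus\{0\}$, which is immediate from $\Re(1-\rme^{-\rmi\lambda})=1-\cos\lambda>0$.
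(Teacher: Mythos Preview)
Your proof is correct and follows essentially the same route as the paper's: both compute $\bigl|(1-\rme^{-\rmi\lambda})^z\bigr|^2=|1-\rme^{-\rmi\lambda}|^{2\Re(z)}\rme^{-2\Im(z)\theta}$ with $|\theta|<\pi/2$, bound the exponential factor by $\rme^{\pm\pi|\Im(z)|}$, use $\tfrac{2}{\pi}|\lambda|\le2|\sin(\lambda/2)|\le|\lambda|$, and split on the sign of $\Re(z)$. Your version is slightly more explicit (the factorization $1-\rme^{-\rmi\lambda}=\rme^{-\rmi\lambda/2}\,2\rmi\sin(\lambda/2)$ and the branch-cut check via $\Re(1-\rme^{-\rmi\lambda})>0$), but the argument is the same.
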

\begin{proof}
  Let $z \in \cset$ and
  $\lambda \in \oseg{-\pi, \pi} \setminus\{0\}$. By definition of the
  principal logarithm, we have
  \begin{equation}
    \label{eq:princ-log-normpower}
\text{for all $y\in\cset\setminus\rset_-$,}\quad  \abs{y^z}^2=\abs{\exp\lr{z\ln(y)}}^2 = \abs{y}^{2
    \Re(z)} \rme^{- 2 \Im(z) b(y)}\;,  
  \end{equation}
  where $b(y)$ denotes the argument in the polar form of $y$ in
 $\osego{-\frac{\pi}{2}, \frac{\pi}{2}}$. It follows that
 $
 \rme^{-\pi
   \abs{\Im(z)}}\leq \rme^{- 2 \Im(z) b(y)} \leq \rme^{\pi
   \abs{\Im(z)}}
 $. 
  Applying~(\ref{eq:princ-log-normpower}) with $y=1-\rme^{-\rmi
    \lambda}$, using that
  $\frac{2\abs{\lambda}}{\pi} \leq \abs{2\sin(\lambda/2)}=\abs{1 - \rme^{- \rmi \lambda}} \leq
  \abs{\lambda}$ for all $\lambda\in(-\pi,\pi)$ and
  separating the cases where $\Re(z)\geq0$ and where  $\Re(z)<0$,
  we get~(\ref{eq:lem:integral-1minusexp1-fiarma}).
\end{proof}

\begin{lemma}\label{lem:integral-1minusexp-fiarma-sup-rho}
For all $z \in \cset$ and $\lambda\in[-\pi/3,\pi/3] \setminus\{0\}$,
we have
  \begin{align}\label{eq:lem:integral-1minusexp1-fiarma-sup-rho}
\sup_{0\leq\rho\leq1} \abs{(1 -\rho\, \rme^{- \rmi \lambda})^{z}}^2\leq \left(2\pi/(3\sqrt{3})\right)^{2\Re_-(z)} \;|\lambda|^{-2\Re_-(z)}\; \rme^{\pi
   \abs{\Im(z)}} \;,
  \end{align}
  where $\Re(z)=(z+\bar z)/2$, $\Re_{+}(z) =\max(\Re(z),0)$ and
  $\Re_{-}(z)= \max(-\Re(z),0)$.
\end{lemma}
\begin{proof}
  Applying~(\ref{eq:princ-log-normpower}) with with $y=1-\rho\rme^{-\rmi
    \lambda}$   and using that
  $b(y)\in\osego{-\frac{\pi}{2}, \frac{\pi}{2}}$, we get that
  $$
  \sup_{0\leq\rho\leq1} \abs{(1 -\rho\, \rme^{- \rmi
      \lambda})^{z}}^2\leq
  \sup_{0\leq\rho\leq1} \abs{1-\rho\rme^{-\rmi
    \lambda}}^{2
    \Re(z)} \;\rme^{\pi
   \abs{\Im(z)}} \;.  
  $$
    Let now $z \in \cset$ and $\lambda\in[-\pi/3,\pi/3]\setminus\{0\}$. It is
  straightforward to show that, in this case,
  $$
  \sin^2(\lambda) =  \inf_{0\leq\rho\leq1} \abs{(1 -\rho\, \rme^{- \rmi \lambda})}^2\leq
  \sup_{0\leq\rho\leq1} \abs{(1 - \rho\,\rme^{- \rmi \lambda})}^2= 1 \;.
  $$
  Separating the cases where $\Re(z)\geq0$ and where $\Re(z)<0$, and,
  in the latter case, using that
  $\lrav{\sin(\lambda)}\geq3\sqrt{3}\lrav{\lambda}/(2\pi)$ for
  $\lrav{\lambda}\leq\pi/3$, we easily
  get~(\ref{eq:lem:integral-1minusexp1-fiarma-sup-rho}).
\end{proof}

The next two lemmas involve Banach-space-valued and non-negative series. 

\begin{lemma}\label{lem:powerseries-vector-fiarma}
  Let $E$ be a Banach space and $(a_n)_{n \in \nset} \in E^\nset$ such that
  $\norm{a_n}_{E} \xrightarrow[n \to \infty]{} 0$ and the series
  $\sum \norm{a_n - a_{n+1}}_{E}$ converges. Then for all
  $z_0 \in \overline{\unitdisk} \setminus \{1\}$, the series
  $\sum_{n=0}^{\infty} a_n z_0^n$ converges in $E$ and the mapping
  $z \mapsto \sum_{n=0}^{\infty} a_n z^n$ is uniformly continuous on $[0,z_0]$.
\end{lemma}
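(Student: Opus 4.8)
The plan is to prove this by Abel's summation by parts, i.e.\ the Banach\-valued analogue of the classical Dirichlet test for power series on the boundary of the unit disc. Fix $z_0$ with $\abs{z_0}\le 1$ and $z_0\neq1$ (this covers the stated case). The starting point is the elementary estimate on the partial sums of the geometric series: for $z\neq1$ with $\abs{z}\le1$, writing $B_n(z):=\sum_{k=0}^{n}z^k=(1-z^{n+1})/(1-z)$, one has $\abs{B_n(z)}\le 2/\abs{1-z}$, since $\abs{z}^{n+1}\le1$.

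First I would apply summation by parts. For integers $1\le M\le N$ and $z$ as above, using $z^n=B_n(z)-B_{n-1}(z)$ (with $B_{-1}(z)=0$) and reindexing yields the identity
\[
\sum_{n=M}^{N} a_n z^n = a_N B_N(z) - a_M B_{M-1}(z) + \sum_{n=M}^{N-1}(a_n-a_{n+1})\,B_n(z)\,.
\]
Taking the norm of $E$ on both sides and invoking the bound on $\abs{B_n}$ gives
\[
\norm{\sum_{n=M}^{N} a_n z^n}_E \le \frac{2}{\abs{1-z}}\Bigl(\norm{a_N}_E+\norm{a_M}_E+\sum_{n=M}^{\infty}\norm{a_n-a_{n+1}}_E\Bigr)\,.
\]
Because $\norm{a_n}_E\to0$ and $\sum_{n}\norm{a_n-a_{n+1}}_E<+\infty$, for a fixed $z\neq1$ the right\-hand side tends to $0$ as $M\to+\infty$, uniformly over $N\ge M$. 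Hence the partial sums $\bigl(\sum_{n=0}^{N}a_nz^n\bigr)_N$ form a Cauchy sequence in the Banach space $E$, so the series converges; taking $z=z_0$ gives the first assertion.

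For the uniform continuity claim, I would observe that the segment $[0,z_0]$ is compact and, since $z_0\neq1$ while $\abs{z_0}\le1$, it does not contain the point $1$; therefore $c:=\inf_{z\in[0,z_0]}\abs{1-z}>0$. Substituting $\abs{1-z}\ge c$ in the displayed estimate shows that the polynomial partial sums $S_N:z\mapsto\sum_{n=0}^{N}a_nz^n$ are uniformly Cauchy on $[0,z_0]$, hence converge uniformly there to $z\mapsto\sum_{n=0}^{\infty}a_nz^n$. A uniform limit of functions that are continuous, hence (on the compact set $[0,z_0]$) uniformly continuous, is itself uniformly continuous, which concludes the proof. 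I do not anticipate a genuine obstacle here: the only points requiring care are the index bookkeeping in the summation\-by\-parts identity and the remark that $1\notin[0,z_0]$, which is exactly where the hypothesis $z_0\neq1$ enters; everything else is the Cauchy criterion in $E$.
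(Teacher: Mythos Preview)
Your proof is correct and follows the same idea as the paper's: the paper's short proof invokes the radius of convergence being at least $1$ for the interior case and ``Abel's transform'' for $\abs{z_0}=1$, which is precisely the summation-by-parts argument you carry out in full detail. Your version is in fact a bit cleaner in that you handle both cases at once via the uniform bound $\abs{B_n(z)}\le 2/\abs{1-z}$ and the observation $1\notin[0,z_0]$, rather than splitting as the paper does.
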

\begin{proof}
  By assumption on $(a_n)$, $\sum a_nz^n$ is a power series valued in $E$ with a
  convergence radius at least equal to 1, and hence is uniformly continuous on any compact subset of $\unitdisk$. 
  When $\abs{z_0} = 1$, the result follows using
  Abel's transform. 
\end{proof}

\begin{lemma}
  \label{lem:1}
  Let $\lr{u_k}_{k\in\nset}$ be a non-negative non-increasing sequence
  such that $\sum_{k\in\nset}u_k<\infty$. Then there exists a
  non-decreasing sequence $\lr{v_k}_{k\in\nset}$, going to $\infty$ as $k\to\infty$
  such that $\sum_{k\in\nset}u_kv_k<\infty$. 
\end{lemma}
\begin{proof}
  Let $k_0=0$, and for all $n\geq1$, define by induction
  $$
  k_n=\min\set{j> k_{n-1}}{\sum_{k=j}^\infty u_k\leq 4^{-n}}\;.
  $$
  Then $(k_n)$ is an increasing sequence of integers going to $\infty$ as
  $n\to\infty$. Define, for all $n\geq1$, and for all $k_{n-1}\leq k
  <k_{n}$, $v_k=2^{n}$. Then $(v_k)_{k\in\nset}$ is a non-decreasing
  sequence going to $\infty$ and we have, by definition of $(v_k)$, using that $(u_k)$
  is non-negative  and then, by definition
  of $(k_n)$,
  $$
  \sum_{k\in\nset}u_kv_k=\sum_{n=0}^\infty2^n
  \lr{\sum_{k_{n-1}\leq k<k_{n}}u_k}\leq\sum_{n=0}^\infty2^n
  \lr{\sum_{k=k_{n-1}}^\infty u_k}\leq4\sum_{n=0}^\infty2^{-n}<\infty\;.
  $$
  The proof is concluded.
\end{proof}

  We end this section with the following lemma which relates the ergodicity of a stationary process
  valued in $\cH_0$ with finite second moment to the behavior of its
  spectral measure at the origin.
  \begin{lemma}\label{lem:l2-ergodic-zero-mass}
    Let $\cH_0$-valued be a separable Hilbert space and
    $X:=(X_t)_{t\in\zset}$ be a centered $\cH_0$-valued weakly
    stationary process. Denote by $U^X$ the shift operator defined on
    the modular time domain $\cH^X$ by $U^X: X_t \mapsto X_{t+1}$ and let $\nu_X$
    be the spectral operator measure of $X$.  Then the two following
    assertions are equivalent and they hold if $X$ is an ergodic
    stationary process.
  \begin{enumerate}[label=(\roman*)]
    \item\label{itm:l2-ergodic} For all $Y\in\cH^X$, we have $U^X Y = Y$ if and only if
      $Y=0$. 
     \item\label{itm:zero-mass} We have $\nu_X(\{0\}) = 0$.  
     \end{enumerate}
  \end{lemma}
  \begin{proof}
    By the Kolmogorov isomorphism theorem  (see
  \cite[\Cref{thm:kolmo-isomorphism-thm}]{surveyREFnew}), we can represent any
    $Y\in\cH^X$ as $Y=\int\Phi\;\rmd\hat{X}$ with $\Phi\in\hat\cH^X$, Assertion
    \ref{itm:l2-ergodic} is thus equivalent to saying that for all
    $\Phi\in\widehat{\cH}^X$, we have $\int_\tore 
    \abs{1-\rme^{\rmi\lambda}}^2 \norm{\Phi f_X^{1/2}}_2^2 \,\rmd\norm{\nu_X}_1= 0$ if
    and only if $\int_\tore \norm{\Phi f_X^{1/2}}_2^2
    \rmd\norm{\nu_X}_1= 0$, where $f_X = \frac{\rmd \nu_X}{\rmd
      \norm{\nu_X}_1}$. Since $\norm{f_X}_1 = 1$ $\norm{\nu_X}_1$-a.e., 
    $\nu_X(\{0\}) \neq 0$ is equivalent to have 
    $\norm{\nu_X}_1(\{0\}) > 0$ and we clearly obtain that
    Assertions~\ref{itm:l2-ergodic} and~\ref{itm:zero-mass} are
    equivalent.

    Suppose now that $X$ is an ergodic stationary process and let us
    show that Assertion~\ref{itm:l2-ergodic} holds. The ergodicity of
    $X$ means that
    $(\cH_0^\zset,\borel(\cH_0)^{\otimes\zset},\PP^X,T)$ is an ergodic
    measure preserving dynamical system, where $\PP^X$ is the
    distribution of $X=(X_t)_{t\in\zset}$ defined on the canonical space
    $(\cH_0^\zset,\borel(\cH_0)^{\otimes\zset})$ and $T$ is the shift
    operator on $\cH_0^\zset$ defined by
    $(x_t)_{t\in\zset}\mapsto(x_{t+1})_{t\in\zset}$. Now take
    $Y\in\cH^X$. Setting $\Omega=\cH_0^\zset$ and
    $\cF=\borel(\cH_0)^{\otimes\zset}$, $Y$ can be seen as the
    equivalence class in $L^2(\Omega,\cF,\cH_0,\PP^X)$ of a measurable
    function $h:\Omega\to\cH_0$. Then $h\circ T$ belongs to the
    equivalence class $U^X Y$. To prove Assertion~\ref{itm:l2-ergodic},
    let us suppose that $Y=U^X Y$ (as elements of $\cH^X$) and show
    that $Y=0$ (since the reverse implication is obvious). From what
    precedes, $Y=U^X Y$ implies $h\circ T= h$, $\as[\PP^X]$ Since
    $(\cH_0^\zset,\borel(\cH_0)^{\otimes\zset},\PP^X,T)$ is ergodic,
    this implies that $h$ is constant, $\as[\PP^X]$, which in turn
    implies $Y=0$, since all elements in $\cH^X$ have mean zero. The
    proof is concluded.
     \end{proof}

\section{$L^2(\Vset,\Vsigma,\xi)$-valued weakly stationary time series}\label{sec:functional-time-series-fiarma}
Within this appendix, we set $\cH_0=L^2(\Vset,\Vsigma,\xi)$ for a
$\sigma$-finite measured space $(\Vset,\Vsigma,\xi)$ and we
assume that the Hilbert space $\cH_0$ is separable with dimension
$N\in\{1,2,\dots,\infty\}$. This will allow us to use a  
 Hilbert basis $(\phi_i)_{0\leq i<N}$ of $\cH_0$.

We first show that we can always find a version of an $\cH_0$-valued
random variable which is jointly measurable on $\Vset\times\Omega$.
\begin{proposition}\label{prop:joint-meas-version-fiarma}
Let $(\Vset,\Vsigma,\xi)$ be a $\sigma$-finite measured space. Assume that $\cH_0=L^2(\Vset,\Vsigma,\xi)$ is separable and let $Y$ be an $\cH_0$-valued random variable defined on
$(\Omega,\cF,\PP)$. Then $Y$ admits a version $(v,\omega)\mapsto \tilde Y(v,\omega)$  jointly measurable on
$(\Vset \times \Omega, \Vsigma \otimes \cF)$. 
\end{proposition}
\begin{proof}
  Let us define for all $0\leq n<N$, $\omega\in\Omega$, $v\in\Vset$
  and $\epsilon>0$,
  $S^Y_n(v,\omega):=\sum_{k=0}^n\pscal{Y(\omega)}{\phi_k}\phi_k(v)$
  and
  $N^Y_{\epsilon}(\omega):=\inf\set{n<N}{\norm{S^Y_n(\cdot,\omega)-Y(\omega)}^2_{\cH_0}\leq\epsilon}$. Then
  it is straightforward to show that, for all $\omega\in\Omega$ and $\epsilon>0$,
  $N^Y_{\epsilon}(\omega)$ is well defined in $\nset$ and that
  $(N^Y_{2^{-n}}(\omega))_n$ is a non-decreasing sequence. We now
  defined $\tilde{Y}$ on $\Vset\times\Omega$ by
  $\tilde Y(v,\omega)=\displaystyle\lim_{n\to\infty}
  S^Y_{N^Y_{2^{-n}}(\omega)}(v,\omega)$ if the limit exists in $\cset$
  and $0$ otherwise. It follows that, for all
  $\omega\in\Omega$, $S^Y_{N^Y_{2^{-n}}(\omega)}(\cdot,\omega)$
  converges to $Y$ in $\cH_0$ and
  $S^Y_{N^Y_{2^{-n}}(\omega)}(v,\omega)$ converges to
  $\tilde Y(v,\omega)$ for $\xi$-$\mae$ $v\in\Vset$ as $n\to\infty$
  and that $\tilde Y(\cdot,\omega)=Y(\omega)$ (as elements of
  $\cH_0$). The result follows since $S_n^Y$ is jointly measurable on
  $\Vset\times\Omega$ for all $n\in\nset$ and $N^Y_\epsilon$ is
  measurable on $\Omega$ for all $\epsilon>0$.
\end{proof}
Hence, an $\cH_0$-valued random variable $Y$ can always be assumed to
be represented by a $\Vset\times\Omega\to\cset$-measurable function
$\tilde Y$. If, moreover, $Y\in L^2(\Omega,\cF,\cH_0,\PP)$, then, by
Fubini's theorem, we can see $\tilde Y$ as an element of
$L^2(\Vset\times\Omega,\Vsigma\otimes\cF,\xi\otimes\PP)$, and we can
write
$ \tilde
Y(v,\omega)=\sum_{0\leq k<N}\pscal{Y(\omega)}{\phi_k}\phi_k(v) $,
where the convergence holds in
$L^2(\Vset\times\Omega,\Vsigma\otimes\cF,\xi\otimes\PP)$. As expected,
in this case, the covariance operator $\PCov(Y)$ is an integral
operator with kernel
$(v,v')\mapsto\cov{\tilde Y(v,\cdot)}{\tilde Y(v',\cdot)}$. It is
tempting to write that $\var{\tilde Y(v,\cdot)}$ is equal to the
kernel of $\PCov(Y)$ on the diagonal
$\set{v=v'}{(v,v')\in\Vset^2}$. However, because this diagonal set has
null $\xi^{\otimes2}$-measure, this ``equality'' is meaningless. In
the following lemma we make this statement rigorous by relying on a
decomposition of the form $\PCov(Y)=K K^\adjoint$ for some
$K \in\cS_2(\cH_0)$. In particular, this can be used to give a
rigorous definition of $\sigma_W$ in
\Cref{cor:cns-arma-fi-operator-functional-fiarma} or
\eqref{eq:ass-LRD-fiarma}.
\begin{lemma}\label{lem:kernel-cov-fiarma}
Let $(\Vset,\Vsigma,\xi)$ be a $\sigma$-finite measured space. Assume that $\cH_0=L^2(\Vset,\Vsigma,\xi)$ is separable and let $Y$ be an $\cH_0$-valued random variable defined on
$(\Omega,\cF,\PP)$.  Let $K \in\cS_2(\cH_0)$ and denote by $\kernelope{K}$ its
  kernel in $L^2(\Vset^2,\Vsigma^{\otimes2},\xi^{\otimes2})$. Suppose
  that $\PCov(Y)=K K^\adjoint$. Then, we
  have,  for $\xi$-$\mae\;v\in\Vset$,
  \begin{equation}
    \label{eq:kernel-cov-fiarma}
  \PEarg{\abs{\tilde Y(v,\cdot)}^2}=\int\abs{\kernelope{K}(v,v')}^2\;\xi(\rmd
  v')=\norm{\kernelope{K}(v,\cdot)}_{\cH_0}^2\;,    
  \end{equation}
  where $\tilde Y$ is a version of $Y$ in
  $L^2(\Vset\times\Omega,\Vsigma\otimes\cF,\xi\otimes\PP)$. 
\end{lemma}
\begin{proof}
  As explained before the lemma, we have that
  $\tilde Y_n :(v,\omega) \mapsto \sum_{0\leq
    k<n}\pscal{Y(\omega)}{\phi_k}_{\cH_0}\,\phi_k(v)$ converges to
  $\tilde Y$ as $n\to N$ in
  $L^2(\Vset\times\Omega,\Vsigma\otimes\cF,\xi\otimes\PP)$.  Let us
  define, for all $v,v'\in\Vset$ and $0\leq n\leq N$,
  $\kernelope{K}_n(v,v')=\sum_{0\leq k<n}
  \pscal{\kernelope{K}(\cdot,v')}{\phi_k}_{\cH_0}\phi_k(v)$. Then,
  using that
  $\kernelope{K}\in L^2(\Vset^2,\Vsigma^{\otimes2},\xi^{\otimes2})$,
  it is easy to show that $\kernelope{K}_n$ converges to
  $\kernelope{K}$ in $L^2(\Vset^2,\Vsigma^{\otimes2},\xi^{\otimes2})$
  as $n \to N$.  By the Cauchy-Schwartz inequality, the mappings
  $(g,h)\mapsto [v\mapsto\PEarg{g(v,\cdot)\,\overline{h(v,\cdot)}}]$
  and $(g,h)\mapsto [v\mapsto\pscal{g(v,\cdot)}{h(v,\cdot)}_{\cH_0}]$
  sesquilinear continuous from
  $L^2(\Vset\times\Omega,\Vsigma\otimes\cF,\xi\otimes\PP)$ to
  $L^1(\Vset,\Vsigma,\xi)$ and from
  $L^2(\Vset^2,\Vsigma^{\otimes2},\xi^{\otimes2})$ to
  $L^1(\Vset,\Vsigma,\xi)$, respectively. This, with the two previous
  convergence results, shows that
  $v\mapsto\PEarg{\abs{\tilde Y_n(v,\cdot)}^2}$ and
  $v\mapsto\norm{\kernelope{K}_n(v,\cdot)}_{\cH_0}^2$ both converge in
  $L^1(\Vset,\Vsigma,\xi)$, to $\PEarg{\abs{\tilde Y(v,\cdot)}^2}$ and
  $\norm{\kernelope{K}(v,\cdot)}_{\cH_0}^2$, respectively, that is to
  the left-hand side and right-hand side
  of~(\ref{eq:kernel-cov-fiarma}).  Hence, to conclude, we only have
  to show that, for all $v\in\Vset$ and $0\leq n<N$, $
  \PEarg{\abs{\tilde
      Y_n(v,\cdot)}^2}=\norm{\kernelope{K}_n(v,\cdot)}_{\cH_0}^2$. To
  this end, we write $ \PEarg{\abs{\tilde Y_n(v,\cdot)}^2}=
  \PEarg{\sum_{0\leq
      j,k<n}\pscal{Y}{\phi_j}_{\cH_0}\pscal{\phi_k}{Y}_{\cH_0}\phi_j(v)\overline{\phi_k(v)}}
  =\sum_{0\leq
    j,k<n}\phi_j^\adjoint\PCov(Y)\phi_k\,\phi_j(v)\overline{\phi_k(v)}$. Using
    the fact that $\PCov(Y)=K
    K^\adjoint$ and Fubini's theorem, we get $\phi_j^\adjoint\PCov(Y)\phi_k=\int
    \pscal{\kernelope{K}(\cdot,v'')}{\phi_j}_{\cH_0}\overline{\pscal{\kernelope{K}(\cdot,v'')}{\phi_k}_{\cH_0}}\,\xi(\rmd
    v'')$.  Inserting this in the previous equation and moving the double sum
    inside the integral with respect to $\xi(\rmd
    v'')$, this double sum becomes a product of two conjugate
    sums. Namely, we get that
    $$
\PEarg{\abs{\tilde Y_n(v,\cdot)}^2}=\int\abs{\sum_{0\leq k<n}
      \pscal{
        \kernelope{K}(\cdot,v'')}{\phi_k}_{\cH_0}\;\phi_k(v)}^2\;\xi(\rmd
    v'') =
    \norm{\kernelope{K}_n(v,\cdot)}_{\cH_0}^2\;,
$$ which concludes the
    proof.
  \end{proof}

\end{document}